\DeclareFontFamily{U}{BOONDOX-calo}{\skewchar\font=45 }
\DeclareFontShape{U}{BOONDOX-calo}{m}{n}{
  <-> s*[1.05] BOONDOX-r-calo}{}
\DeclareFontShape{U}{BOONDOX-calo}{b}{n}{
  <-> s*[1.05] BOONDOX-b-calo}{}
\DeclareMathAlphabet{\mathcalboondox}{U}{BOONDOX-calo}{m}{n}
\SetMathAlphabet{\mathcalboondox}{bold}{U}{BOONDOX-calo}{b}{n}
\DeclareMathAlphabet{\mathbcalboondox}{U}{BOONDOX-calo}{b}{n}
\theoremstyle{plain}
\newtheorem{thm}{Theorem}[section]
\newtheorem{lem}[thm]{Lemma}
\newtheorem{prop}[thm]{Proposition}
\newtheorem{defin}[thm]{Definition}
\theoremstyle{definition}
\newtheorem{rmk}[thm]{Remark}
\newcommand{\fhi}{\varphi}
\def\enne{\mathbb{N}}
\def\cN{\mathcal{N}}
\def\erre{\mathbb{R}}
\def\P{\mathbb{P}}
\def\calP{\mathcal{P}}
\def\E{\mathop{{}\mathbb{E}}}
\def\cL{\mathscr{L}}
\def\cF{\mathscr{F}}
\def\eps{\varepsilon}
\def\beq{\begin{equation}}
\def\eeq{\end{equation}}
\renewcommand{\d}{{\mathrm d}}
\def\to{\rightarrow}
\def\wto{\rightharpoonup}
\def\wstarto{\stackrel{*}{\rightharpoonup}}
\def\embed{\hookrightarrow}
\def\cembed{\stackrel{c}{\hookrightarrow}}
\def\norm #1{\left\|#1\right\|}
\def\sp #1#2{\left<#1,#2\right>}
\newcommand\ip\sp
\newcommand\testopari{\sc C.~Orrieri, E.~Rocca, L.~Scarpa}
\newcommand\testodispari{\sc Stochastic tumor growth models}
\markboth{\testodispari}{\testopari}
\title{\huge\rm Optimal control of stochastic phase-field models
			related to tumor growth
			\footnote{{\bf Acknowledgments.} This research was supported by 
			the Italian Ministry of Education, University and Research (MIUR): 
			Dipartimenti di Eccellenza Program (2018--2022) -- Dept. of Mathematics 
			``F. Casorati'', University of Pavia. 
			In addition, it has been performed in the framework of the project 
			Fondazione Cariplo-Regione Lombardia  MEGAsTAR
			``Matema\-tica d'Eccellenza in biologia ed ingegneria come acceleratore
			di una nuova strateGia per l'ATtRattivit\`a dell'ateneo pavese''.
			The present paper also benefits from the support of the GNAMPA 
			(Gruppo Nazionale per l'Analisi Matematica, la Probabilit\`a e le loro Applicazioni)
			of INdAM (Istituto Nazionale di Alta Matematica)
			through the project ``Trasporto ottimo per dinamiche con interazione''.
			LS is also funded by Vienna Science and Technology Fund (WWTF) 
			through Project MA14-009.}\\[1cm]}
\author{	{\large\sc Carlo Orrieri}$^{(1)}$\\
		{\normalsize E-mail: \texttt{carlo.orrieri@unitn.it}}\\[.4cm]
		{\large\sc Elisabetta Rocca}$^{(2)}$\\
		{\normalsize E-mail: \texttt{elisabetta.rocca@unipv.it}}\\[.4cm]
		{\large\sc Luca Scarpa}$^{(3)}$\\
		{\normalsize E-mail: \texttt{luca.scarpa@univie.ac.at}}\\[.75cm]
		$^{(1)}$
		{\small Department of Mathematics, University of Trento}\\
		{\small Via Sommarive 14, 38123 Povo (Trento), Italy}\\[.4cm]		
		$^{(2)}$
		{\small Department of Mathematics, University of Pavia, and IMATI - C.N.R.}\\
		{\small Via Ferrata 5, 27100 Pavia, Italy}\\[.4cm]
		$^{(3)}$
		{\small Faculty of Mathematics, University of Vienna}\\
		{\small Oskar-Morgenstern-Platz 1, 1090 Vienna, Austria}
	}
\begin{document}

\maketitle

\begin{abstract}
We study a stochastic phase-field model for tumor growth dynamics coupling a stochastic Cahn-Hilliard equation for the tumor phase parameter
with a stochastic reaction-diffusion equation governing the nutrient proportion. 
We prove strong well-posedness of the system in a general framework through monotonicity and stochastic compactness arguments.
We introduce then suitable controls representing the concentration of cytotoxic drugs administered in medical treatment and we analyze  a related optimal control problem.  
We derive existence of an optimal strategy and deduce first-order 
necessary optimality conditions by studying the corresponding linearized 
system and the backward adjoint system.
  \\[.4cm]
  {\bf AMS Subject Classification:} 35R60, 35K55, 49J20, 78A70.\\[.2cm]
  {\bf Key words and phrases:} stochastic systems of partial differential equations, Cahn-Hilliard equation, optimal control, first-order necessary conditions, tumor growth.
\end{abstract}

\tableofcontents


\thispagestyle{empty}

\section{Introduction}
\setcounter{equation}{0}
\label{sec:intro}

In the recent years phase-field systems have been used to describe many complex systems, 
in particular related to biomedical applications and specially to tumor growth dynamics. 
In this paper we consider a version of a phase-field model for tumor growth recently introduced in  \cite{GLSS16}, where we have neglected the effects 
of chemotaxis and active transport. The new feature of the present work consists in adding stochastic 
terms in both the PDEs ruling the tumor-dynamic and then studying a related optimal control problem. 

This model describes the evolution of a tumor mass surrounded by healthy tissues 
by taking into account proliferation of cells 
via nutrient consumption and apoptosis. In particular, the model under consideration 
in this paper fits into the framework
of {\sl diffuse interface}\/ models for tumor growth. In this setting the evolution of
the tumor is described by an order parameter $\varphi$ which represents the 
local concentration of tumor cells; the interface between the tumor and healthy cells
is supposed to be represented by a narrow transition layer separating
the pure regions where $\varphi=\pm1$, with $\varphi=1$ denoting the tumor phase 
and $\varphi= -1$ the healthy phase. 

We consider here the case of an incipient tumor, i.e., 
before the development of quiescent cells, when the equation ruling the evolution  of  the tumor
growth process is often given by a Cahn--Hilliard (CH) equation \cite{CH} for $\varphi$
coupled with a reaction-diffusion equation for the nutrient $\sigma$ 
(cf., e.g., \cite{CLLW09, GLSS16, HZO, HKNZ15}). We just mention here that 
more sophisticated models have been also developed, possibly including different tumor phases
 (e.g., proliferating and necrotic), or incorporating the effects of fluid flow in the system evolution. 
In this direction, multiphase Cahn-Hilliard-Darcy systems
\cite{Ciarletta,CL10,FLRS, GLSS16,GLNS, WLFC08} have been analyzed in the deterministic case. 
Further studies on tumor growth modelling are presented in \cite{mir-pert, pou-trel} with a particular emphasis to emergence of resistance to therapy.

All the references listed above deal with deterministic representations 
of the tumor growth.
However, it is widely accepted that tumor-dynamics
can be regarded as a random evolution due to stochastic proliferation 
and differentiation of cells (cf.~\cite{TanChen98}). 
Tumor metastases, for example, are generally activated randomly by
biological signals originating at the cellular level and these
processes influence the critical proliferation rates that directly govern the evolution of tumor cells.
In \cite{CM09, NDetAL05} the authors deal with stochastic angiogenesis models with the aim of generating more realistic structures
of capillary networks. 
Other interesting contributions in the bio-medical literature (cf., e.g., \cite{LAO14}) are concerned with stochastic avascular models, taking into account the uncertainty of
the (most influential) parameters. 
Finally, many studies have used stochastic perturbations in an attempt to model the effects of treatments on tumor growth.
In this way, additive noises could represents environmental disruptions caused by therapeutic procedures in cancer treatment: indeed, these are delivered to the entire 
tumor tissue, and most likely to the healthy surrounding tissues as well. 
Although this appears far too simplified to represent real clinical procedures, it can be of acceptable accuracy when applied to the evolution of tumors at early stages.
We refer to \cite{MM17} for the modelling of treatment in the logistic tumor growth dynamic by means of a suitable (white) noise.

General PDE-models of tumor growth that
account for randomness are rare in literature. 
For what concerns diffuse-interface descriptions, up to our knowledge, 
in this paper we give a first contribution in this direction.
The stochastic perturbation that we are considering has a twofold motivation.\\
On one hand, we adopt a statistical approach to deal with the (too much) complicated free energy describing the bio-medical properties of the system.
To do it, we directly add to the CH-equation a additive noise taking into account all the microscopical fluctuation affecting the evolution of the phase parameter.
A natural choice for such a perturbation would be a space-time white noise. However, from the mathematical point of view, space-time whites noises are
difficult to handle in higher space-dimension, and one usually considers
smoothed-in-space noises by introducing a suitable covariance operator.\\
On the other hand, we introduce a multiplicative noise in the reaction diffusion equation (which actually depends on the nutrient concentration) with the aim of modelling the effects of angiogenensis.  
A stochastic forcing of this type is indeed related to the oxygen received by cancerous cells:
this may result in enhancing its effectiveness, and therefore its contribution, 
to the total growth process of the tumor.

More precisely, including the two stochastic terms into the deterministic system we end up with  
the following stochastic
Cahn-Hilliard-reaction-diffusion model for tumor growth:
\begin{align}
  \label{eq:1}
  \d \varphi - \Delta\mu\,\d t = (\mathcal P\sigma- a - \alpha u)h(\varphi)\,\d t + G\,\d W_1 
  \qquad&\text{in } (0,T)\times D\,,\\
  \label{eq:2}
  \mu = -A\Delta\varphi + B\psi'(\varphi) \qquad&\text{in } (0,T)\times D\,,\\
  \label{eq:3}
  \d\sigma - \Delta\sigma\,\d t + c\sigma h(\varphi)\,\d t + b (\sigma-w)\,\d t 
  = \mathcal H(\sigma)\,\d W_2\qquad&\text{in } (0,T)\times D\,,\\
  \label{eq:bound}
  \partial_{\bf n}\varphi = \partial_{\bf n}\mu = \partial_{\bf n}\sigma = 0 \qquad&\text{in } (0,T)\times \partial D\,,\\
  \label{eq:init}
  \varphi(0) = \varphi_0\,, \quad \sigma(0)=\sigma_0 \qquad&\text{in } D\,.
\end{align}
Here, $D\subset\erre^3$ is a smooth bounded domain with smooth boundary, 
$T>0$ is a fixed final time,
$W_1$, $W_2$ are independent cylindrical Wiener processes on separable 
Hilbert spaces $U_1$ and $U_2$, respectively, 
defined on a stochastic basis $(\Omega, \cF, (\cF_t)_{t\in[0,T]}, \P)$,
$G$ is a stochastically integrable operator 
with respect to $W_1$ and ${\cal H}$ is a suitable 
Lipschitz-type operator.
Notice that the initial configuration of the system can be chosen random as well.

The parameters
$\mathcal P, a, \alpha, A, B, c, b$ are assumed to be strictly positive constants.
Namely, $\mathcal P$ denotes the tumor proliferation rate, $a$ the apoptosis rate, 
$\alpha$ the effectiveness rate of the cytotoxic drugs, 
$c$ the nutrient consumption rate, and 
$b$ the nutrient supply rate, while 
$A$ and $B$ are related to the tickness of the interface between the pure phases. 
The function $h$ is assumed to be monotone increasing, nonnegative in
the ``physical'' interval $[-1,1]$, and normalized so that $h(-1) = 0$ and 
$h(1)=1$. 
The term $\mathcal P \sigma h(\fhi)$ models the proliferation of tumor cells, 
which is proportional to the concentration of the nutrient, 
the term $a h(\fhi)$ describes the apoptosis (or death)
of tumor cells, and $c \sigma h(\fhi)$ represents 
the consumption of the nutrient by the tumor cells, which is higher if more tumor cells are present.
The control variables are $u$ in \eqref{eq:1} and $w$ in \eqref{eq:3}, which can be interpreted as 
a therapy (chemotherapy and antiangiogenic therapy, respectively, for example) distribution entering the system, 
either via the mass balance equation or the nutrient 
(cf. also \cite{garcke-lam-roc}, \cite{CGRS}, and \cite{prostate}) for similar choices for the control variables). 
Finally, 
$\psi'$~stands for the derivative of a double-well potential~$\psi$.
 A typical example of potential, meaningful in view of applications, has the expression
\begin{equation}\label{double}
  \psi_{pol}(r) = \frac 14 (r^2-1)^2\,,
  \quad r \in \erre\,.
\end{equation}
We may observe 
that in the first part of our analysis (related to well-posedness of the system) we can allow for more general 
regular potentials having at least cubic and at most exponential growth at infinity. 

The mathematical literature on the stochastic 
Cahn-Hilliard and Allen-Cahn equations is quite developed: we refer for example to 
\cite{bauz-bon-leb,orr-scar} for the stochastic Allen-Cahn equation,
and to \cite{Cornalba, DPD,deb-goud, EM,goud, scar-SCH, scar-SVCH}) for the stochastic 
Cahn-Hilliard equation. For completeness, let us quote also \cite{feir-petc, feir-petc2}
for a study on a stochastic diffuse interface model involving
the Cahn-Hilliard and Navier-Stokes equations.
Nevertheless, 
similar results for coupled stochastic Cahn-Hilliard reaction-diffusion systems 
were not previously studied, up to our knowledge:
in this sense, this contribution can be seen as a first work in this direction. 

After proving the well-posedness of the SPDE-system above, 
we are interested here in the study of the following optimal control problem:
\begin{description}
    \item{\bf(CP)} \textit{Minimize the cost functional}
\begin{align*}
  J(\varphi,u,w)&:=\frac{\beta_1}{2}\E\int_Q|\varphi-\varphi_Q|^2
  +\frac{\beta_2}{2}\E\int_D|\varphi(T)-\varphi_T|^2
  +\frac{\beta_3}{2}\E\int_D(\varphi(T) + 1)\\
  &\quad+\frac{\beta_4}{2}\E\int_Q|u|^2
  +\frac{\beta_5}{2}\E\int_Q|w|^2\,,
\end{align*}
\textit{subject to the control constraint} $(u,w)\in \mathcal U$, \textit{where}
\begin{align*}
  \mathcal U:=&\Big\{(u,w)\in
  L^2(\Omega; L^2(0,T; H))^2\text{ progr.~measurable:}
  \quad 0\leq u,w \leq 1 \text{ a.e.~in } \Omega\times(0,T)\times D\Big\}\,,
\end{align*}
\textit{and to the state system \eqref{eq:1}--\eqref{eq:init}.}
\end{description}
Let us comment on the optimal control problem \textbf{(CP)}. 
The function $\varphi_{Q}$  indicate some desired evolution for the tumor cells
and $\fhi_{T}$ stands for a desired final distribution of tumor cells
(for example suitable for surgery).
The first two terms of ${J}$ are of standard tracking type, 
while the third term of ${J}$ measures the size of the tumor at the end of the treatment. 
The fourth and fifth terms penalize large concentrations of the cytotoxic drugs
through integral over the full space-time domain of the squared 
nutrient and drug concentrations.
As it is presented in ${J}$, a large value of $|\fhi - \fhi_T|^{2}$ 
would mean that the patient suffers from the growth
of the tumor during the treatment, 
and a large value of $|u|^2$ (or $|w|^2$) would mean that the patient 
suffers from high toxicity of the drugs. 
The nonnegative coefficients $\beta_i$ indicate the importance 
of conflicting targets given in the
strategy to avoid unnecessary harm to the patient, 
and at the same time increment the quality of the approximation of 
$\varphi_Q$, $\varphi_T$.
By the optimal control problem \textbf{(CP)},
we aim at searching for a medical strategy $(u,w)$
such that the corresponding tumor concentration
is as close as possible to the targets $\varphi_Q$ and $\varphi_T$,
the tumoral size at the end of the treatment is minimal,
and the total amount of nutrient or drug supplied 
(which is restricted by the control constraints)
does not inflict any harm to the patient.

We shall prove, under suitable regularity on the data, the existence
of a \textit{relaxed} optimal control and derive first-order necessary optimality 
conditions by exploiting a stochastic counterpart of the maximum principle \`a la Pontryagin.
To do so, we introduce a backward (stochastic) system and we formulate necessary conditions for optimality through a variational inequality.  
The optimal control problem is studied in the particular case of the 
classical double well potential \eqref{double} and in case ${\cal H}\equiv0$, 
meaning that we are neglecting the stochastic perturbation entering
in the nutrient equation and that the only source of randomness
is contatined in the CH-equation: see Remark~\ref{Hzero} for further comments on this topic. 

In the recent mathematical literature a number of results related 
to optimal control for deterministic tumor growth models has appeared. 
We can quote \cite{garcke-lam-roc, CGRS, CRW, CGMR, Signori} 
for models coupling different variants of Cahn-Hilliard equations with 
reaction-diffusion equations and \cite{EK1, EK2, SW} 
for the case when also the velocity dynamic has been taken into account. 
Moreover, an optimal control problem taking into account also damages to healthy tissues was studied in \cite{pou-trel} in the context of integro-differential modelling.
For what concerns optimal control problems and  necessary condition for optimality in the stochastic setting we can refer to the monography \cite{YZ99} for a general overview. The infinite dimensional case is tackled e.g. in  
\cite{fuhr-hu-tess, fuhr-orr, mas-orr}, dealing with stochastic heat equation and reaction-diffusion systems. 
Optimal control for stochastic 
Cahn-Hilliard equations is discussed in  the recent paper 
\cite{Scarpa} where the control variable is contained in the chemical potential equation. 

Let us comment on the mathematical difficulties of this work.
First of all, we stress that the stochastic system \eqref{eq:1}--\eqref{eq:3} does not
fall in any available framework for studying stochastic evolution equations:
this is due to both the growth of $\psi$, which may be of any
polynomial order or first-order exponential, and more importantly
to the coupling terms appearing in \eqref{eq:1} and \eqref{eq:3},
which prevent from having a global monotonicity property 
for the pair $(\varphi,\sigma)$ in a reasonable product space. 
Indeed, \eqref{eq:1}--\eqref{eq:2} have a monotone behaviour in the 
dual space of $H^1(D)$, while \eqref{eq:3} is monotone on $L^2(D)$.
Such issue does not depend on the stochastic setting and represents
a crucial difficulty also in the deterministic case, which also  requires an {\em ad-hoc} analysis of the system.
Let us explain now the main differences when dealing 
with the stochastic case in comparison with the deterministic case.

Let us start by mentioning that the standard compactness techniques used in the deterministic setting cannot be directly applied. 
This mainly results from the lack of compactness of the embedding from $L^r(\Omega; \mathcal{X})$ into $L^r(\Omega, \mathcal{Y})$, $1\le r \le +\infty$, even if $\mathcal{X} \hookrightarrow \mathcal{Y}$ in a compact fashion. 
To bypass the problem, we rely on the combination of the Skorohod representation theorem with the well-known Gy\"ongy-Krylov's criterium \cite{gyongy-krylov}.
The crucial ingredient for the all machinery to work is a pathwise uniqueness result for the system, which is not a straightforward consequence of its deterministic counterpart. 

The main issue when dealing with the well-posedness of the system
\eqref{eq:1}--\eqref{eq:init} is the presence of proliferation terms 
in the Cahn-Hilliard equation, resulting in the non-conservation
of the spatial mean of $\varphi$. In the deterministic setting, this
difficulty is usually overcome by estimating directly the mean of $\varphi$
and by keeping track of it in the estimates on the solutions: in particular, 
such procedure hinges 
on the Lipschitz-continuity of $h$ and on proving the boundedness
of $\sigma\in[0,1]$ through a maximum principle for the reaction-diffiusion equation.
By contrast, in the stochastic setting, proving the boundedness of $\sigma$ in $[0,1]$
is much more delicate, due to a lack of a maximum principle in a general framework
for \eqref{eq:3}. In our setting, we are still 
able to show that $\sigma$
remains bounded in $[0,1]$ during the evolution thanks
to suitable assumptions on the 
operator $\mathcal H$: roughly speaking, 
we require that the noise in equation \eqref{eq:3} ``switches off''
whenever $\sigma$ touches the values $0$ or $1$. This 
procedure is very common from the mathematical point of view,
and is also pretty reasonable in terms of applications: indeed, 
as we have pointed out before, the multiplicative noise in \eqref{eq:3}
could model the angiogenesis phenomenon, i.e.~the formation
of new blood vessels to convey more nutrient towards the tumoral cells.
Hence, it is very natural 
to assume that this is neglectable when
$\sigma=1$ (i.e.~when the nutrient level is saturated)
and $\sigma=0$ (i.e.~when in principle no nutrient is available).

In order to keep track 
of the estimate on the spatial average of $\varphi$
one usually estimates $\varphi$ in terms of $\sigma$ from \eqref{eq:1}--\eqref{eq:2},
$\sigma$ in terms of $\varphi$ from \eqref{eq:3}, and concludes by 
``closing the estimate'' using the boundedness of $\sigma$ and  
a Gronwall-type argument. While this procedure perfectly works 
in the deterministic setting, there are several points deserving attention
in the stochastic case. The main problem is that, due to the presence
of the noise terms, one is forced to use estimates in expectation
through some maximal martingale inequalities, and cannot rely
on any pathwise estimate (i.e.~with $\omega\in\Omega$ being fixed).
Consequently, when trying to close the estimate through the Gronwall lemma,
some further regularity is needed on the solutions in terms of
existence of exponential moments.

A related issue arising in the stochastic setting concerns 
the continuous dependence of the solution $(\varphi,\sigma)$
on the data of the problem, in particular on the controls $(u,w)$.
Indeed, as it is common when dealing with 
optimal control problems, we need to deduce some
continuous dependence properties with respect to the controls
in stronger topologies in order to tackle the linearized system.
As we have anticipated, this hinges again on the combination
of estimates in expectations and the Gronwall lemma, resulting
in a need for boundedness of exponential moments of the solutions.
The continuous dependence property is achieved with much more
difficulty with respect to the deterministic case, as
a careful track of the specific moments of the solutions is necessary.

Let us now turn to the major issues arising in the stochastic optimal control problem.
In the deterministic setting, the first step consists in showing 
the Fr\'echet differentiability of the control-to-state map
in order to analyse the linearized system:
this is usually achieved (also for degenerate potentials)
by proving that $\psi''(\bar\varphi)\in L^\infty((0,T)\times D)$, 
where $\bar\varphi$ is the state variable 
corresponding to a fixed control 
$(\bar u, \bar w)$. 
Such a regularity can be obtained 
by requiring that the set of admissible controls is
bounded in $L^\infty$ and performing 
a maximum-principle-argument.
Nevertheless, in the stochastic case,
there is no hope to prove an $L^\infty$-bound on $\varphi$,
due to the presence of the additive noise in \eqref{eq:1}.
We overcome this problem as it was done in 
\cite{Scarpa}. First, we prove that the linearized system admits a 
unique variational solution. Then, we show that the control-to-state mapping
is G\^ateaux differentiable only in a weak sense, 
and that its derivative can be identified as the unique solution to the linearized system: 
this is still enough to deduce a first-order variational inequality.

The second main difficulty in the stochastic context
consists in solving the adjoint problem, which is a system of backward stochastic PDEs.
Recall that inverting time in stochastic dynamics is not straightforward and results in the introduction of 
two additional variables (one for $\varphi$ and one for $\sigma$), which guarantees that solutions are adapted to the fixed filtration $(\cF_t)_t$. 
Moreover, in the present case, the classical variational theory for backward SPDEs cannot be applied directly due to the presence of the coupling terms and to the growth of the potential.
To solve the problem, we firstly approximate the backward equations and subsequently exploit the (abstract) duality relation between the adjoint and the linearized dynamics. 
This permit to get uniform estimates on the adjoint variables through a (more easier) continuous dependence of the linearized system.
For an application of this strategy to stochatic reaction-diffusion equations we refer to \cite{fuhr-orr}.

Let us finally mention that this work is meant as a 
first contribution in the direction of analyzing stochastic mathematical models 
of tumor growth and many relevant questions remain open: 
it would be interesting to validate the results with numerical 
simulations and medical data (we refer for example to \cite{GP95} for a possible stochastic model of MRI data).
Moreover, 
it would be interesting to invesitgate uniqueness of optimal control problems and to include the duration of the therapy as another control variable in the formulation of the cost functional
(cf.~\cite{garcke-lam-roc} and \cite{CRW} for similar analysis in the deterministic case).

Here is the plan of the paper. In Section~\ref{sec:main} 
we state the main assumptions and the main results of the paper concerning 
existence and uniqueness of solutions, a refined well-posedness theorem 
and the results on the optimal control problem: 
existence of the optimal controls, G\^ateaux differentiability of the control-to-state map
and first order optimality conditions obtained by solving the adjoint system. 
The proofs of the first well-posedness result is given in Section~\ref{sec:exist} and 
the refined well-posedness is proved in Section~\ref{sec:refined}.
The last Section~\ref{sec:opt} 
is devoted to the optimal control problem and to the analysis of the adjoint system.


\section{General setting and main results}
\setcounter{equation}{0}
\label{sec:main}

Throughout the paper, $D\subset\erre^3$ is a smooth bounded domain
with smooth boundary $\Gamma$: we shall denote the space-time cylinder $(0,T)\times D$ by $Q$
and use the classical notation $Q_t$ for $(0,t)\times D$ with $t\in[0,T)$.

It is useful to introduce the spaces
\begin{equation*}
H:=L^2(D)\,, \qquad V:=H^1(D)\,, \qquad Z:=\left\{u\in H^2(D):\partial_{\bf n} u = 0 \text{ a.e.~on } \Gamma\right\} .
\end{equation*}
For a generic element $y \in V^*$ we denote by $y_D$ the average $y_D:= \frac{1}{D} \langle y,1 \rangle_V$ and we recall the Poincar\'e-Wirtinger inequality: there exists $M_D>0$, only depending on $D$, such that
\begin{equation}\label{Poin-wirt}
\| v- v_D \|_H \leq M_D \| \nabla v\|_H \quad \forall \, v \in V\,.
\end{equation}
Setting
\[V^*_0 := \{ y \in V^*: y_D = 0 \}\,, \qquad  V_0 := V \cap V^*_0\,,\]
we can define the operator $\cN: V^*_0 \to V_0$ as the unique solution of the generalized Neumann problem
\[ \int_D \nabla \cN y \cdot  \nabla \phi  = \langle y, \phi \rangle_V \quad \forall \, \phi \in V\,, \qquad (\cN y)_D = 0\,.\]
This also provides an equivalent norm on $V^*$ given by the following
\[ \| y\|_*^2 := \| \nabla \cN (y - y_D)\|_H^2  + |y_D|^2\,, \quad  y \in V^*\,.\]
In particular,  by the inequality \eqref{Poin-wirt} this guarantees that
there exists $M_D'>0$, only depending on $D$, such that
\begin{equation}\label{ineq:Ny}
\| \cN y\|_V \leq M'_D\| \nabla \cN y\|_H = \|y\|_* \quad\forall\,y\in V_0^*\,.
\end{equation}
We also recall that the following compactness inequality holds:
for every $\eps>0$ there exists $M_\eps>0$ such that 
\beq
  \label{comp_ineq}
  \norm{y}_H^2\leq \eps\norm{\nabla y}_H^2 + M_\eps\norm{y}_{V^*}^2 \quad\forall\,y\in V\,.
\eeq

Furthermore,
$(\Omega, \cF, (\cF_t)_{t\in[0,T]}, \P)$ is a filtered probability space
satisfying the usual conditions, where $T>0$ is a fixed final time, 
and $W_1$, $W_2$ are independent cylindrical Wiener processes on separable 
Hilbert spaces $U_1$ and $U_2$, respectively. 
We shall assume that the filtration $(\cF_t)_t$ is 
the one generated by $(W_1, W_2)$.
Let also $(e_n)_n$
be an arbitrary, but fixed, complete orthonormal system of $U_2$ and denote with $\cL^2(U_i, H)$ the space of Hilbert-Schmidt operators from $U_i$ to $H$, $i = 1,2$. 
In the following we frequently use the shorthand $F \cdot W_i := \int_0^T  F\, \d W_i$, for suitable stochastically integrable operators $F$. 

Throughout the work, we shall use the symbol $a\lesssim b$
for any $a,b\geq0$ to indicate that 
there exists a constant $c>0$, independendent of $a,b$, such that 
$a\leq cb$. When the implicit constant $c$ depends on some
relevant external quantities that we want to keep track of,
we shall write it as a subscript.

Fix now $p\geq 6$. The following assumptions will be in order throughout the paper:
\begin{itemize}
  \item[(A1)] $\mathcal P$, $a$, $\alpha$, $b$, $c$, $A$, $B$ are positive constants, 
  		and $h:\erre\to[0,1]$ is a Lipschitz-continuous function with Lipschitz constant $L_h>0$
		such that $h(-1)=0$ and $h(1)=1$;
  \item[(A2)] $\psi:\erre\to[0,+\infty)$ is of class $C^2$ and satisfies 
       		\begin{align*}
		|\psi''(x)|&\leq C_1(1+|\psi'(x)|)\,,\\
		\psi''(x)&\geq -C_2\,,\\
		|\psi'(x)-\psi'(y)|&\leq C_3\left(1 + |\psi''(x)| + |\psi''(y)|\right)|x-y|
		\end{align*}
		for some positive constants $C_i$, $i=1,2,3$, and for every $x,y\in\erre$;
  \item[(A3)] $G\in L^4(0,T; \cL^2(U_1,V)) \cap 
		L^\infty(0,T; \cL^2(U_1,V^*))$ is progressively measurable;
  \item[(A4)] the sequence $(\mathcalboondox{h}_n)_n\subset W^{1,\infty}(\erre)$ satisfies
  		\[
		L^2_{\mathcal H}:=\sum_{n=0}^\infty\norm{\mathcalboondox{h}_n'}^2_{L^\infty(\erre)}<+\infty\,,\qquad
		\mathcalboondox{h}_n(0)=\mathcalboondox{h}_n(1)=0 \quad\forall\,n\in\enne\,.
		\]
		This implies that
		it is well-defined and $L_{\mathcal H}$-Lipschitz-continuous the operator
		\[
		\mathcal H:H\to\cL^2(U_2, H)\,, \qquad \mathcal H(x): e_n\mapsto \mathcalboondox{h}_n(x) \quad \forall\,n\in\enne\,;
		\]
  \item[(A5)] $u, w$ are $H$-valued progressively measurable processes such that 
  		$u,w\in[0,1]$ almost everywhere in $\Omega\times(0,T)\times D$;
  \item[(A6)] $\varphi_0\in L^p(\Omega,\cF_0,\P; V)$ and $\psi(u_0)\in L^{p/2}(\Omega,\cF_0,\P; L^1(D))$;
  \item[(A7)] $\sigma_0 \in L^2(\Omega,\cF_0,\P; H)$ and $\sigma_0\in[0,1]$ a.e.~in $\Omega\times D$.
\end{itemize}

\begin{rmk}
  Let us comment on the assumptions above. First of all, note that assumption (A2) 
  includes both the classical polynomial double-well potential \eqref{double}
  as well as any double-well potential with at most first-order exponential growth at infinity.
  Secondly, a typical example of operator $G$ satisfying (A3) is given by any fixed
  nonrandom and time-independent $G\in\cL^2(U_1,V)$. Moreover, 
  in assumption (A4) we are requiring that the multiplicative noise $\mathcal H\,dW_2$
  acts along the directions $e_n$ through the Lipschitz-continuous function $\mathcalboondox{h}_n$
  for all $n$. The requirement that $\mathcalboondox{h}_n$ vanishes at $0$ and $1$ heuristically means
  that the noise switches off as soon as $\sigma$ touches the border-values $0$ and $1$.
  Finally, the assumptions (A6)--(A7) on the initial data are trivially satisfied 
  when $\varphi_0\in V$ and $\sigma\in H$ are nonrandom with 
  $\psi(\varphi_0)\in L^1(D)$ and $\sigma\in[0,1]$ almost everywhere in $D$.
\end{rmk}

The first three results concern the well-posedness of the state system.
First of all, we prove existence of solutions and continuous dependence on
the data in the most general framework (A1)--(A7) in Theorems~\ref{th:1}--\ref{th:2}.
Then, in Theorem~\ref{th:3} we show that under additional assumptions on the data the 
solution to the system inherits further regularity and a stronger dependence on the 
controls can be written: this will be necessary in order to 
tackle the optimal control problem.

\begin{thm}[Existence]
  \label{th:1}
  Under the assumptions {\em(A1)--(A7)} there exists a unique triplet $(\varphi,\mu,\sigma)$ of progressively
  measurable $V$-valued processes with
  \begin{gather}
    \label{phi1}
    \varphi \in L^p\left(\Omega; C^0([0,T]; H)\cap L^\infty(0,T; V)\cap L^2(0,T; Z)\right)\,, \qquad
    \varphi - G\cdot W_1 \in L^p(\Omega; H^1(0,T; V^*))\,,\\
    \label{phi2}
    \psi'(\varphi) \in L^{p/2}(\Omega; L^2(0,T; H))\,,\\
    \label{mu}
    \mu \in L^{p/2}(\Omega; L^2(0,T; V))\,, \qquad \nabla\mu \in L^p(\Omega; L^2(0,T; H))\,, \qquad
    \mu_D \in L^{p/2}(\Omega; L^\infty(0,T))\,,\\
    \label{sigma1}
    \sigma \in L^p\left(\Omega; C^0([0,T]; H) \cap L^2(0,T; V)\right)\,,\qquad
    \sigma - \mathcal H(\sigma)\cdot W_2 \in L^p(\Omega; H^1(0,T; V^*))\,,\\
    \label{sigma2}
    \qquad \sigma\in[0,1] \quad\text{a.e.~in } \Omega\times Q\,,\\
    \label{phi_sigma_0}
    \varphi(0)=\varphi_0\,, \quad \sigma(0)=\sigma_0
    \qquad\text{a.e.~in } \Omega\times D\,,
  \end{gather}
  and satisfying
  \begin{gather}
  \ip{\partial_t(\varphi-G\cdot W_1)(t)}{\zeta}_V 
  + \int_D\nabla\mu(t)\cdot\nabla\zeta =
  \int_D\left(\mathcal P\sigma(t) - a - \alpha u\right)h(\varphi(t))\zeta\,,\label{eq_phi_weak} \\
  \label{eq_mu_weak}
  \int_D\mu(t)\zeta = A\int_D\nabla\varphi(t)\cdot\nabla\zeta + B\int_D\psi'(\varphi(t))\zeta\,,\\
  \ip{\partial_t(\sigma-\mathcal H(\sigma)\cdot W_2)(t)}{\zeta}_V + \int_D\left[\nabla\sigma(t)\cdot\nabla\zeta
  +c\sigma(t)h(\varphi(t))\zeta + b(\sigma(t)-w(t))\zeta\right] = 0 \label{eq_sigma_weak} 
  \end{gather}
  for every $\zeta\in V$, for almost every $t\in(0,T)$, $\P$-almost surely.
\end{thm}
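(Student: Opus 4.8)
The plan is to build the solution by a double approximation--compactness--uniqueness scheme, following the Skorohod/Gy\"ongy--Krylov route advertised in the introduction. First I would regularize the super-quadratic term by replacing $\psi'$ with a globally Lipschitz approximation $\psi_\lambda'$ (for instance a Moreau--Yosida/truncation-type regularization preserving $\psi_\lambda''\geq -C_2$ and the bounds of (A2) uniformly in $\lambda$), and discretize in space by a Faedo--Galerkin scheme based on the eigenfunctions of the Neumann Laplacian on $D$, which diagonalize all the linear operators and are compatible with the triples $V\embed H\embed V^*$. For each fixed regularization level and Galerkin index the coupled system \eqref{eq:1}--\eqref{eq:3} becomes a finite-dimensional SDE with locally Lipschitz coefficients, hence locally well posed; the a priori estimates below upgrade this to a global solution.

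The heart of the argument is a set of a priori estimates uniform in the two approximation parameters. I would apply It\^o's formula to the natural Lyapunov functional $\tfrac{A}{2}\norm{\nabla\varphi}_H^2 + B\int_D\psi_\lambda(\varphi)$ for the Cahn--Hilliard part and, separately, to $\tfrac12\norm{\sigma}_H^2$ for the reaction--diffusion part, exploiting that \eqref{eq:1}--\eqref{eq:2} are monotone in $V^*$ while \eqref{eq:3} is monotone in $H$. Because the proliferation term destroys conservation of the spatial mean $\varphi_D$, I would estimate $\varphi_D$ separately (integrating \eqref{eq:1} in space and using the Lipschitz bound on $h$ from (A1)) and feed it back through the equivalent norm \eqref{ineq:Ny} and the compactness inequality \eqref{comp_ineq}. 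The coupling terms are controlled by Young's inequality and absorbed via a Gronwall argument; since the noise forces estimates in expectation through Burkholder--Davis--Gundy rather than pathwise, closing the estimate at the level of the $p$-th moments ($p\geq 6$) requires careful bookkeeping of the high moments, using (A3)--(A4) and (A6)--(A7). A separate, structural argument gives the confinement $\sigma\in[0,1]$: testing the $\sigma$-equation with smooth approximations of $(\sigma-1)^+$ and $\sigma^-$ and applying It\^o, the stochastic contribution drops out because $\mathcalboondox{h}_n(0)=\mathcalboondox{h}_n(1)=0$ by (A4), while the drift $b(\sigma-w)$ with $w\in[0,1]$ pushes $\sigma$ back into the interval, yielding \eqref{sigma2}.

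Next I would pass to the limit. Using the uniform bounds together with fractional-in-time regularity of the stochastic convolutions, I would establish tightness of the laws of the approximate solutions in suitable path spaces (Aubin--Lions--Simon compactness for the deterministic part, plus the standard tightness criterion for the martingale part). Since $L^r(\Omega;\mathcal X)$ does not embed compactly into $L^r(\Omega;\mathcal Y)$, I invoke the Skorohod representation theorem to obtain, on a new probability space, a.s.-convergent copies, and identify the limit as a martingale (analytically weak) solution. The only delicate identification is the monotone term: $\psi_\lambda'(\varphi)$ converges only weakly a priori, so I would recover $\psi'(\varphi)$ either from strong convergence of $\varphi$ in $L^2(0,T;H)$ (available from the compactness) or by a Minty-type monotonicity/lower-semicontinuity argument, and pass to the limit in the products $\sigma h(\varphi)$ and $\mathcal H(\sigma)$ using the a.s. strong convergence and the Lipschitz continuity in (A1) and (A4).

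Finally, pathwise uniqueness promotes the martingale solution to a strong one. I would take the difference of two solutions sharing the same data, test \eqref{eq:1}--\eqref{eq:2} in $V^*$ (using $\cN$) and \eqref{eq:3} in $H$, apply It\^o, and exploit monotonicity of $\psi'$ up to the $-C_2$ term, the Lipschitz continuity of $h$ and $\mathcal H$, and the bound $\sigma\in[0,1]$, while tracking the difference of the means as above; a stochastic Gronwall inequality then forces the two solutions to coincide. With pathwise uniqueness and tightness in hand, the Gy\"ongy--Krylov criterion \cite{gyongy-krylov} upgrades convergence in law to convergence in probability on the original stochastic basis, delivering the unique progressively measurable solution with the regularity \eqref{phi1}--\eqref{phi_sigma_0}. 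I expect the main obstacle to be precisely the interplay of the two points the introduction stresses: the absence of a global monotone structure for $(\varphi,\sigma)$ --- which forces the two-space Gronwall closure and hence the control of high moments --- together with establishing pathwise uniqueness, which does not follow directly from the deterministic estimate and must be carried out with the mean-tracking and the $[0,1]$-confinement built in.
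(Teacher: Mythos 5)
Your proposal follows essentially the same route as the paper's proof: Yosida-type regularization of $\psi'$, the maximum principle for $\sigma\in[0,1]$ via smooth approximations of $(\sigma-1)^+$ and $\sigma^-$ using $\mathcalboondox{h}_n(0)=\mathcalboondox{h}_n(1)=0$, It\^o estimates on the free energy with separate tracking of the spatial mean closed by Burkholder--Davis--Gundy and Gronwall, tightness plus Skorohod representation, and pathwise uniqueness (localized by stopping times) combined with the Gy\"ongy--Krylov criterion to recover a strong solution. The only deviation is a sub-step: where you propose a Faedo--Galerkin discretization of the regularized coupled system, the paper instead solves it by a Banach fixed-point argument that decouples the two equations and invokes known well-posedness results for each (Lipschitz) sub-problem --- a difference that does not affect the overall structure.
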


\begin{thm}[Continuous dependence]
  \label{th:2}
  Under assumptions {\em(A1)--(A4)}, let the data
  $(u_1,w_1,\varphi^1_0,\sigma^1_0)$ and $(u_2,w_2, \varphi^2_0,\sigma^2_0)$ 
  satisfy {\em(A5)--(A8)}, and let 
  $(\varphi_1,\mu_1,\sigma_1)$ and $(\varphi_2,\mu_2,\sigma_2)$ be two respective solutions satisfying
  \eqref{phi1}--\eqref{eq_sigma_weak}.
  Then there exists a sequence $(\tau^N)_{N \in \enne}$ of stopping times
  and a sequence $(M_N)_{N\in\enne}$ of positive real numbers,
  depending on $\varphi_1$ and $\varphi_2$, such that  
  $\tau^N\nearrow T$ $\P$-a.s.~as $N\to\infty$, and
  \begin{equation}\label{cont_dep_1}
  \begin{split}
  &\E\sup_{t\in[0,\tau^N]}\left(\|(\varphi_1  - \varphi_2)(t)\|_{V^*}^p + \|(\sigma_1  - \sigma_2)(t)\|_{H}^p\right)
  +\E\left(\int_0^{\tau^N} \| \nabla(\varphi_1-\varphi_2)(s) \|_H^2\,\d s\right)^{p/2}\\
  &\qquad+
  \E\left(\int_0^{\tau^N} \|(\sigma_1-\sigma_2)(s) \|_V^2\,\d s\right)^{p/2}\\
  &\leq M_N\left[\| \varphi_0^1 - \varphi_0^2 \|_{L^{p}(\Omega; V^*)}^p 
  +\| \sigma_0^1-\sigma_0^2 \|_{L^{p}(\Omega;H)}^p
  +\E\left(\int_0^{\tau^N} \|(u_1-u_2)(s) \|_{V^*}^2\,\d s\right)^{p/2}\right.\\
  &\qquad\left.+\E\left(\int_0^{\tau^N} \|(w_1-w_2)(s) \|_{H}^2\,\d s\right)^{p/2}\right]
  \qquad\forall\,N\in\enne\,.
  \end{split}
  \end{equation}
  In particular, the problem \eqref{phi1}--\eqref{eq_sigma_weak} admits a unique solution.
\end{thm}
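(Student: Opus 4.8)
The plan is to estimate the differences $\bar\varphi:=\varphi_1-\varphi_2$, $\bar\mu:=\mu_1-\mu_2$, $\bar\sigma:=\sigma_1-\sigma_2$ by deriving a single semimartingale inequality for the energy $E:=\tfrac12(\|\bar\varphi\|_*^2+\|\bar\sigma\|_H^2)$ and then running a \emph{localized} Gronwall argument. A decisive simplification is that the additive noise $G\cdot W_1$ is the same in both copies, so it cancels in the equation for $\bar\varphi$; hence $\bar\varphi\in H^1(0,T;V^*)$ $\P$-a.s.\ and the $\varphi$-part of $E$ may be differentiated pathwise, with no It\^o correction. Only the $\bar\sigma$-equation carries a genuine stochastic integral, through $\mathcal H(\sigma_1)-\mathcal H(\sigma_2)$.

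First I would treat the Cahn--Hilliard part. Testing the equation for $\bar\varphi$ against $\cN P_0\bar\varphi$, where $P_0y:=y-y_D$, and tracking the mean via $|D|\,\tfrac{d}{dt}\bar\varphi_D=\int_D\bar f$ separately, I obtain from the definition of $\cN$ and $(\cN y)_D=0$ the identity
\[
\tfrac12\tfrac{d}{dt}\|\nabla\cN(\bar\varphi-\bar\varphi_D)\|_H^2+A\|\nabla\bar\varphi\|_H^2=-B\!\int_D(\psi'(\varphi_1)-\psi'(\varphi_2))\bar\varphi+\bar\varphi_D\!\int_D\bar\mu+\int_D\bar f\,\cN P_0\bar\varphi,
\]
with $\bar f$ the difference of the reaction terms of \eqref{eq:1}. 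The cubic term is handled by the semiconvexity $\psi''\ge-C_2$ from (A2), giving $-B\int_D(\psi'(\varphi_1)-\psi'(\varphi_2))\bar\varphi\le BC_2\|\bar\varphi\|_H^2$, which is absorbed through \eqref{comp_ineq} into $\eps\|\nabla\bar\varphi\|_H^2+M_\eps\|\bar\varphi\|_{V^*}^2$. Using $\int_D\bar\mu=B\int_D(\psi'(\varphi_1)-\psi'(\varphi_2))$, the mean term is controlled by the growth bound $|\psi''|\le C_1(1+|\psi'|)$ and the third inequality of (A2), which produces a weight $\rho(s):=1+\|\psi'(\varphi_1)\|_H+\|\psi'(\varphi_2)\|_H$, lying in $L^2(0,T)$ $\P$-a.s.\ by \eqref{phi2}. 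In $\bar f$ I exploit $h\in[0,1]$ Lipschitz and the pointwise bounds $\sigma_i,u_i\in[0,1]$; crucially, the control difference $\bar u$ is kept in $V^*$ by pairing it against $h(\varphi_1)$ times the test functions and bounding $\|h(\varphi_1)\,\cN P_0\bar\varphi\|_V$ via \eqref{ineq:Ny}, which requires $\nabla\varphi_1\in L^3(D)$, i.e.\ the $L^2(0,T;Z)$-regularity \eqref{phi1}. This is precisely why the $H^2$-regularity of Theorem~\ref{th:1} is needed, and why the dependence on $u$ can be measured in the weaker norm $V^*$.

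Next I apply It\^o's formula to $\|\bar\sigma\|_H^2$. The Laplacian yields the dissipation $-\|\nabla\bar\sigma\|_H^2$; the consumption term splits into $c\,h(\varphi_1)\bar\sigma^2\ge0$ plus a remainder bounded by $cL_h\|\bar\varphi\|_H\|\bar\sigma\|_H$ using $\sigma_2\in[0,1]$; the It\^o correction $\tfrac12\|\mathcal H(\sigma_1)-\mathcal H(\sigma_2)\|_{\cL^2(U_2,H)}^2\le\tfrac12L_{\mathcal H}^2\|\bar\sigma\|_H^2$ comes from (A4); and the supply term gives $b\ip{\bar\sigma}{\bar w}\le\tfrac b2\|\bar\sigma\|_H^2+\tfrac b2\|\bar w\|_H^2$, the origin of the $H$-dependence on $w$. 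Adding this to the $\varphi$-inequality and choosing $\eps$ small to absorb the gradient terms, I reach a differential inequality of the form $dE+\tfrac12 D\,dt\le a(s)E\,ds+C(\|\bar u\|_{V^*}^2+\|\bar w\|_H^2)\,ds+dM$, where $D$ controls $\|\nabla\bar\varphi\|_H^2+\|\nabla\bar\sigma\|_H^2$ (the missing $\|\bar\sigma\|_H^2$ needed to complete $\|\bar\sigma\|_V^2$ being supplied by the supremum term), $dM=\ip{\bar\sigma}{(\mathcal H(\sigma_1)-\mathcal H(\sigma_2))\,dW_2}$ is a martingale, and $a(s)\lesssim 1+\rho(s)^2+\|\nabla\varphi_1(s)\|_{L^3}^2+\|\nabla\varphi_2(s)\|_{L^3}^2$ is nonnegative and $\P$-a.s.\ time-integrable by \eqref{phi1}--\eqref{phi2}. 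I then set $\tau^N:=\inf\{t:\int_0^t a\,ds\ge N\}\wedge T$, so that $\int_0^{\tau^N}a\,ds\le N$ $\P$-a.s.\ and $\tau^N\nearrow T$, integrate on $[0,\tau^N]$, take the supremum, raise to the power $p/2$ (here $p\ge6$), pass to expectations, and estimate $M$ by the Burkholder--Davis--Gundy inequality, splitting its quadratic variation $\lesssim L_{\mathcal H}^2\int\|\bar\sigma\|_H^4$ by Young into an absorbable $\delta\,\E\sup\|\bar\sigma\|_H^p$ plus $\E(\int\|\bar\sigma\|_H^2)^{p/2}$. Uniqueness then follows by taking equal data, which forces $E\equiv0$ on each $[0,\tau^N]$, hence on $[0,T]$.

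I expect the main obstacle to be this last step: closing the estimate at the level of $p/2$-th moments and the supremum norm while the Gronwall coefficient $a(s)$ is genuinely \emph{random}. As flagged in the introduction, no pathwise estimate is available because of the noise, so one must combine the maximal martingale inequality with a stochastic Gronwall lemma; it is exactly the localization by $(\tau^N)$ that keeps $\int_0^{\tau^N}a\,ds$ bounded by the \emph{deterministic} constant $N$, which is what turns the Gronwall factor into something of the form $e^{cN}$ and yields the constant $M_N$ depending on $\varphi_1,\varphi_2$ only through their norms. Without this truncation one would be forced to control global exponential moments of $\varphi_1$ and $\varphi_2$.
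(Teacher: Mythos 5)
Your proposal is correct and follows essentially the same route as the paper: the same decomposition of the Cahn--Hilliard difference into its mean and the $\cN$-tested zero-mean part, the same use of (A2) (semiconvexity plus the Lipschitz-type bound, yielding a random but $\P$-a.s.\ time-integrable Gronwall weight built from $\psi'(\varphi_i)$ or equivalently $\psi''(\varphi_i)$), It\^o plus Burkholder--Davis--Gundy for the $\sigma$-difference, localization by stopping times that cap the random weight at the deterministic level $N$, a stochastic-Gronwall closing of the circular $\varphi$--$\sigma$ dependence, and uniqueness by letting $N\to\infty$. The only differences are organizational: you run a single energy $E=\tfrac12(\|\bar\varphi\|_*^2+\|\bar\sigma\|_H^2)$ where the paper argues in two steps (a pathwise Gronwall for the $\varphi$-part on $[0,\tau^N]$, then moment estimates for $\sigma$, combined via the stochastic Gronwall lemma it cites from M\'etivier), and your stopping time additionally carries the weight $\|\nabla\varphi_1\|_{L^3}^2+\|\nabla\varphi_2\|_{L^3}^2$, which in fact makes explicit the $L^2(0,T;Z)$-regularity needed to estimate $\|h(\varphi_1)\,\cN P_0\bar\varphi\|_V$ when the control difference is paired in $V^*$ --- a point the paper's proof uses but leaves implicit.
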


\begin{thm}[Refined well-posedness]
  \label{th:3}
  Under the assumptions {\em(A1)--(A7)}, suppose also that
  \begin{gather}
  \label{ip_ref1}
  \exp\left(\beta\norm{\varphi_0}_H^2\right)\in L^1(\Omega) \quad\forall\,\beta\geq1\,,\\
  \label{ip_ref2}
  G\in L^\infty(\Omega\times(0,T); \cL^2(U_1,H))\,,\\
  \label{ip_ref3}
  \psi\in C^3(\erre)\,, \qquad \exists\, C_4>0:\quad |\psi'''(r)| \leq C_4(1+|r|) \quad\forall\,r\in\erre\,.
  \end{gather}
  Then, the unique solution $(\varphi,\mu,\sigma)$ to \eqref{phi1}--\eqref{eq_sigma_weak}
  also satisfies
  \begin{gather}
  \label{phi_H3}
  \varphi\in L^{p/3}(\Omega; L^2(0,T; H^3(D)))\,,\\
  \label{phi_reg}
  \exp\left(\beta\norm{\varphi}^2_{L^2(0,T; Z)}\right)
   \in L^1(\Omega) \quad\forall\,\beta\geq1\,.
  \end{gather}
  Furthermore, assume also that
  \beq\label{ip_H0}
  \mathcal H\equiv0\,.
  \eeq
  Let the data $(u_1,w_1,\varphi^1_0,\sigma^1_0)$ and 
  $(u_2,w_2, \varphi^2_0,\sigma^2_0)$ 
  satisfy {\em(A5)--(A7)} and \eqref{ip_ref1}, 
  and let $(\varphi_1,\mu_1,\sigma_1)$ and 
  $(\varphi_2,\mu_2,\sigma_2)$ be the two respective solutions satisfying
  \eqref{phi1}--\eqref{eq_sigma_weak} and \eqref{phi_reg}:
  then for every $q>p$ there 
  exists a positive constant $M_{q,p}=M_{q,p}(\varphi_1,\varphi_2)$ such that 
  \begin{equation}\label{cont_dep_2}
  \begin{split}
  &\| \varphi_1  - \varphi_2 \|_{L^{p}(\Omega; C^0([0,T]; V^*))} 
  + \| \nabla(\varphi_1-\varphi_2) \|_{L^{p}(\Omega; L^2(0,T;H))} 
  + \norm{\sigma_1-\sigma_2}_{L^p\left(\Omega; C^0([0,T]; H)\cap 
  L^2(0,T; V)\right)}\\
  &\qquad\leq M_{q,p}\Bigl(\| \varphi_0^1 - \varphi_0^2 \|_{L^q(\Omega; V^*)} 
  +\| \sigma_0^1-\sigma_0^2 \|_{L^q(\Omega;H)}\Bigr.\\
  &\quad\qquad\Bigl.+ \|u_1-u_2\|_{L^q(\Omega; L^2(0,T;V^*))} 
  +\norm{w_1-w_2}_{L^q(\Omega; L^2(0,T; H))}\Bigr)\,.
  \end{split}
  \end{equation}
  If also
  \beq
  \label{ip_ref4}
  \exists\,r>\frac{2pq}{q-p}:\qquad \varphi_0\in L^r(\Omega; V)\,, \quad G\in L^r(\Omega; L^2(0,T; \cL^2(U_1,V)))\,,
  \eeq
  then there exists a positive constant $M_{p,q,r}=M_{p,q,r}(\varphi_1, \varphi_2)$ such that 
  \begin{equation}\label{cont_dep_3}
  \begin{split}
  &\| \varphi_1  - \varphi_2 \|_{L^{p}(\Omega; C^0([0,T]; H)\cap L^2(0,T; Z))} 
  + \norm{\sigma_1-\sigma_2}_{L^p\left(\Omega; C^0([0,T]; H)\cap 
  L^2(0,T; V)\right)}\\
  &\qquad\leq M_{p,q,r}\Bigl(\| \varphi_0^1 - \varphi_0^2 \|_{L^q(\Omega; H)} 
  +\| \sigma_0^1-\sigma_0^2 \|_{L^q(\Omega;H)}\Bigr.\\
  &\qquad\quad\Bigl.+ \|u_1-u_2\|_{L^q(\Omega; L^2(0,T;H))} 
  +\norm{w_1-w_2}_{L^q(\Omega; L^2(0,T; H))}\Bigr)\,.
  \end{split}
  \end{equation}
\end{thm}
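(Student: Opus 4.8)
Proof proposal.

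The plan is to establish the three groups of claims in order: first the spatial regularity \eqref{phi_H3}, then the exponential moment \eqref{phi_reg}, and finally the two continuous-dependence bounds \eqref{cont_dep_2}--\eqref{cont_dep_3}. For \eqref{phi_H3} I would work from the identity $\mu=-A\Delta\varphi+B\psi'(\varphi)$: taking a spatial gradient gives $A\nabla\Delta\varphi=B\psi''(\varphi)\nabla\varphi-\nabla\mu$, so by elliptic regularity for the Neumann Laplacian it suffices to control $\psi''(\varphi)\nabla\varphi$ in $H$, the term $\nabla\mu$ being directly in $L^p(\Omega;L^2(0,T;H))$ by \eqref{mu}. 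Using the growth of $\psi''$ from (A2) and the three-dimensional embedding $Z=H^2(D)\embed W^{1,6}(D)\cap L^\infty(D)$, I would estimate $\norm{\psi''(\varphi)\nabla\varphi}_H\lesssim(1+\norm{\varphi}_V^2)\norm{\varphi}_Z$, integrate in time, and take the $L^{p/3}(\Omega)$-norm, splitting the product by Hölder between $L^p(\Omega;L^\infty(0,T;V))$ and $L^p(\Omega;L^2(0,T;Z))$, both finite by Theorem~\ref{th:1}; the admissible exponents $\tfrac32$ and $3$ are exactly what turns $p$ into $p/3$.

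The exponential bound \eqref{phi_reg} is the heart of the statement and I expect it to be the main obstacle. Testing $\mu=-A\Delta\varphi+B\psi'(\varphi)$ with $-\Delta\varphi$ and using $\psi''\geq-C_2$ gives the elliptic estimate $\norm{\varphi}_Z^2\lesssim1+\norm{\varphi}_V^2+\norm{\nabla\mu}_H^2$, which reduces the claim to exponential integrability of $\int_0^T\norm{\nabla\mu}_H^2$ and of $\sup_t\norm{\varphi}_V^2$. The natural tool is Itô's formula for the energy $\tfrac A2\norm{\nabla\varphi}_H^2+B\int_D\psi(\varphi)$, whose dissipation is precisely $\int_0^t\norm{\nabla\mu}_H^2$ and whose martingale part is $\int_0^t\langle\mu,G\,\d W_1\rangle$. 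The decisive use of \eqref{ip_ref2} is that boundedness of $G$ forces $\d\langle M\rangle\lesssim\norm{\mu}_H^2\,\d t$, so that after splitting $\mu$ into mean and oscillation and absorbing the $\norm{\nabla\mu}_H^2$-part into the dissipation the free quadratic variation is controlled. I would then apply Itô to $\exp\bigl(\beta E(\varphi)+\tfrac\beta2\int_0^\cdot\norm{\nabla\mu}_H^2\bigr)$ and tune the absorption constants so the drift is nonpositive up to bounded terms, making the process a supermartingale; taking expectations yields $\E\exp(\beta\int_0^T\norm{\nabla\mu}_H^2)<\infty$ for every $\beta$. Two points are delicate: keeping the coefficient of $\norm{\nabla\mu}_H^2$ in the exponential drift nonpositive for arbitrary $\beta$ (which is what the bounded-noise hypothesis makes possible), and the fact that the energy is naturally controlled by $\norm{\varphi_0}_V$ and $\int_D\psi(\varphi_0)$ rather than $\norm{\varphi_0}_H$, so one must combine this with a time-weighted smoothing version of the estimate so that only the $H$-norm of the datum enters, matching \eqref{ip_ref1}.

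For \eqref{cont_dep_2} I would set $\varphi:=\varphi_1-\varphi_2$, $\sigma:=\sigma_1-\sigma_2$ and exploit the simplification from $\mathcal H\equiv0$: the additive noise $G\,\d W_1$ is common to both solutions and cancels, and the nutrient equation carries no noise, so the difference system is a pathwise (random) PDE. I would then run the energy estimate of Theorem~\ref{th:2}, pairing the Cahn-Hilliard difference with $\cN(\varphi-\varphi_D)$ in $V^*$ and the nutrient difference with $\sigma$ in $H$, handling the mean of $\varphi$ separately through the equation. The nonlinearities are controlled by the Lipschitz bound on $h$ and the increment bound in (A2), $\abs{\psi'(\varphi_1)-\psi'(\varphi_2)}\leq C_3(1+\abs{\psi''(\varphi_1)}+\abs{\psi''(\varphi_2)})\abs{\varphi_1-\varphi_2}$; the crucial move is to place $\psi''(\varphi_i)$ in $L^\infty(D)$ via $H^2(D)\embed L^\infty(D)$, which gives a Gronwall coefficient $g(t)=C(1+\norm{\varphi_1}_Z^2+\norm{\varphi_2}_Z^2)$ whose time-integral equals $\norm{\varphi_1}_{L^2(0,T;Z)}^2+\norm{\varphi_2}_{L^2(0,T;Z)}^2$ up to constants. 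The pathwise Gronwall lemma produces the factor $\exp(\int_0^T g)$; taking the $L^{p/2}(\Omega)$-norm and applying Hölder with conjugate exponents $q/(q-p)$ and $q/p$ lets me absorb this factor by \eqref{phi_reg} while leaving the data in $L^q$, which is precisely why the estimate must lose integrability, $q>p$.

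Finally, for \eqref{cont_dep_3} I would test the Cahn-Hilliard difference in $H$ rather than $V^*$, producing the dissipation $A\norm{\Delta\varphi}_H^2$ and hence control of $\varphi$ in $C^0([0,T];H)\cap L^2(0,T;Z)$; integrating by parts in $\int_D(\psi'(\varphi_1)-\psi'(\varphi_2))\Delta\varphi$, the new ingredient is a bound on $\psi''(\varphi_1)-\psi''(\varphi_2)$, for which I would use \eqref{ip_ref3}, $\abs{\psi'''(r)}\leq C_4(1+\abs r)$, together with $H^2\embed L^\infty\cap W^{1,6}$. The difficulty is that these terms carry factors of $\norm{\varphi_i}_Z$ of total degree higher than two, so $\exp(\int_0^T(\cdots))$ need no longer be integrable and a pure exponential-Gronwall argument fails. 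I expect to close the estimate by a bootstrap instead: insert the already-proven \eqref{cont_dep_2} to control $\int_0^T\norm{\varphi}_V^2$ by the data, and bound the remaining high-order terms by Hölder in $\Omega$, using \eqref{phi_H3} and \eqref{phi_reg} for the factors in $\varphi_i$; the threshold $r>\tfrac{2pq}{q-p}$ in \eqref{ip_ref4} is exactly what guarantees, through the dependence of $\varphi_i$ and $\nabla\mu_i$ on $\varphi_0$ and $G$, that all Hölder factors have finite moments of the required order. The nutrient estimate is unchanged from the previous step. I regard \eqref{phi_reg} and this final bookkeeping as the two places where genuinely new work beyond the deterministic theory is needed.
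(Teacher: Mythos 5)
Your handling of \eqref{phi_H3}, and the overall architecture you propose for \eqref{cont_dep_2}--\eqref{cont_dep_3} (pathwise Gronwall with an integrable coefficient, then H\"older in $\Omega$ with exponents $q/p$ and $q/(q-p)$, resp.\ a three-factor splitting under \eqref{ip_ref4}), essentially coincide with the paper's. The genuine gap is in your argument for \eqref{phi_reg}, which you correctly identify as the heart of the theorem. You propose to run It\^o's formula on $\exp\bigl(\beta\mathcal E(\varphi)+\tfrac\beta2\int_0^\cdot\norm{\nabla\mu}_H^2\bigr)$, where $\mathcal E$ is the free energy, and tune constants so that the drift is nonpositive. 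Compute that drift: it contains $-\tfrac\beta2\norm{\nabla\mu}_H^2$ from the dissipation, while the quadratic-variation correction contributes $+\tfrac{\beta^2}2\sum_j|(\mu,Ge_j)_H|^2\leq\tfrac{\beta^2}2\norm{G}^2_{\cL^2(U_1,H)}\norm{\mu}_H^2$. The dissipation scales linearly in $\beta$ and the correction quadratically, so once $\beta$ exceeds a constant determined by the $L^\infty$-bound of $G$ no choice of absorption constants makes the drift nonpositive; hypothesis \eqref{ip_ref2} controls the constant, not the $\beta$-scaling, so your parenthetical claim that bounded noise ``makes this possible'' for arbitrary $\beta\geq1$ is false --- and arbitrarily large $\beta$ is genuinely needed downstream, since \eqref{cont_dep_2} uses \eqref{phi_reg} with $\beta\sim\tfrac p2\tfrac q{q-p}$, which blows up as $q\downarrow p$. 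Two further defects compound this: the quadratic variation also carries $|\mu_D|^2\sim\bigl|\int_D\psi'(\varphi)\bigr|^2$, which for the quartic potential grows like $\mathcal E^{3/2}$, i.e.\ superlinearly in the quantity in the exponent; and $\mathcal E(\varphi_0)$ has no exponential moments under \eqref{ip_ref1}, while your time-weighting fix inserts $+\int_0^t\mathcal E(s)\,\d s$ into the drift, whose exponential moments (involving $\int_Q\psi(\varphi)$) are again unavailable.

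The paper avoids all three problems by never estimating through the energy or $\nabla\mu$: it writes It\^o's formula for $\norm{\varphi_\lambda}_H^2$ on the Yosida-regularized system (where this is rigorous), which upon combining the two equations produces the dissipation $\int_{Q_t}|\Delta\varphi_\lambda|^2$ directly; since $\norm{\varphi}_Z^2\lesssim\norm{\varphi}_H^2+\norm{\Delta\varphi}_H^2$, this already gives the $Z$-norm. The martingale is then $\int_0^t(\varphi_\lambda,G)_H\,\d W_1$ with quadratic variation $\leq\norm{G}^2_{L^\infty(\Omega\times(0,T);\cL^2(U_1,H))}\int_{Q_t}|\varphi_\lambda|^2$ --- linear in the time integral of exactly the quantity estimated pointwise in time on the left. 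One splits off the exponential supermartingale $\exp\bigl(3\beta M_t-\tfrac{(3\beta)^2}2\langle M\rangle_t\bigr)$ (expectation $\leq1$, so no absorption into the dissipation is ever attempted), and the leftover term $\E\exp\bigl(c_\beta\int_{Q_{T_0}}|\varphi_\lambda|^2\bigr)$ is handled by Jensen--Fubini, $\E\exp\bigl(\tfrac\beta{T_0}\norm{\varphi_\lambda}^2_{L^2(0,T_0;H)}\bigr)\leq\sup_{t\leq T_0}\E\exp\bigl(\beta\norm{\varphi_\lambda(t)}_H^2\bigr)$: shrinking $T_0$ gains the arbitrarily large factor $1/T_0$ that replaces your failed absorption, and a patching argument in time concludes; this is also why \eqref{ip_ref1} involves only $\norm{\varphi_0}_H$. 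A secondary slip in your step for \eqref{cont_dep_2}: bounding $\psi''$ along the segment in $L^\infty(D)$ forces you to pair against $\norm{\varphi_1-\varphi_2}_{L^1(D)}$ or $\norm{\varphi_1-\varphi_2}_H$, neither of which sits on the left of the $V^*$-estimate; interpolating them away either puts the unbounded factor $\norm{\psi''}_{L^\infty(D)}$ in front of the dissipation or squares it into $\norm{\varphi_i}_Z^4$, which is not exponentially integrable. The paper instead pairs in the $V$--$V^*$ duality and uses \eqref{ip_ref3} to get $\norm{\int_0^1\psi''(\iota\varphi_1+(1-\iota)\varphi_2)\,\d\iota}_V\lesssim1+\norm{\varphi_1}_Z^2+\norm{\varphi_2}_Z^2$, keeping the Gronwall coefficient linear in $\norm{\varphi_i}_Z^2$; that $V$-bound on $\psi''$ is precisely what hypothesis \eqref{ip_ref3} is for.
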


\begin{rmk}\label{Hzero}
  Note that the refined assumptions \eqref{ip_ref1}--\eqref{ip_ref3}  and \eqref{ip_ref4}
  are not much restrictive: indeed, they are still satisfied
  by the wide class of data $\varphi_0\in V$ nonrandom, $G\in\cL^2(U_1,V)$ fixed
  (nonrandom and time-independent) and by the classical double-well potential \eqref{double} of degree $4$.
  The reason why we need to assume \eqref{ip_H0} is that,
  in order to obtain the continuous dependence property on the whole time interval $[0,T]$,
  one needs to rely necessarily on pathwise estimates
  (i.e.~with $\omega\in\Omega$ fixed) 
  of $\sigma_1-\sigma_2$ in terms of
  $\varphi_1-\varphi_2$: this is possible only if the 
  noise in equation \eqref{eq:3} is of additive type. However, since it is also crucial 
  to prove that $\sigma\in[0,1]$ (hence also (A4) must be in order), the 
  only reasonable choice is to require that $\mathcal H\equiv0$ and to neglect 
  the noise in the equation for $\sigma$.
\end{rmk}

We are now ready to introduce the general setting 
for the optimal control problem of the state system \eqref{eq:1}--\eqref{eq:init}.
From now on, we shall assume (A1)--(A7)
\eqref{ip_ref1}--\eqref{ip_ref3}, \eqref{ip_H0} and \eqref{ip_ref4}, so that 
the strongest continuous dependence property \eqref{cont_dep_3} holds,
and we shall also fix an exponent $q>p$.

We introduce the set of admissible controls $\mathcal U$ as
\begin{align*}
  \mathcal U:=&\Big\{(u,w)\in
  L^2(\Omega; L^2(0,T; H))^2\text{ progr.~measurable:}
  \quad 0\leq u,w \leq 1 \text{ a.e.~in } \Omega\times(0,T)\times D\Big\}\,,
\end{align*}
which is clearly a closed convex bounded subset of $L^\infty(\Omega\times Q)^2$. 
It will be useful to embed $\mathcal U$ in an open bounded subset $\tilde{\mathcal U}  \subset L^q(\Omega; L^2(0,T; H))^2 \cap L^\infty(\Omega \times Q)^2$.

We define the cost functional $J$ as
\begin{align*}
  J&: L^2(\Omega; C^0([0,T]; H))\times L^2(\Omega; L^2(0,T; H))\times L^2(\Omega; L^2(0,T; H))
  \longrightarrow[0,+\infty)\,,\\
  J(\varphi,u,w)&:=\frac{\beta_1}{2}\E\int_Q|\varphi-\varphi_Q|^2
  +\frac{\beta_2}{2}\E\int_D|\varphi(T)-\varphi_T|^2
  +\frac{\beta_3}{2}\E\int_D(\varphi(T) + 1)\\
  &\quad+\frac{\beta_4}{2}\E\int_Q|u|^2
  +\frac{\beta_5}{2}\E\int_Q|w|^2\,,
\end{align*}
where $\beta_i\geq0$, $i=1,\ldots,5$, are fixed constants and we assume that
\[
   \beta_1\varphi_Q\in L^6(\Omega; L^2(0,T; H))\,,
   \qquad \beta_2\varphi_T\in L^6(\Omega,\cF_T; V)\,.
\]
Moreover, Theorems~\ref{th:1}--\ref{th:3} ensure that the control-to-state map
\[
  S:\tilde{\mathcal U}\to L^p(\Omega; C^0([0,T]; H)\cap L^\infty(0,T; V)\cap L^2(0,T; Z))\,,\qquad
  (u,w)\mapsto \varphi
\]
is well-defined and furthermore it is Lipschitz-continuous in the sense specified in \eqref{cont_dep_3}: 
the proofs of Theorems~\ref{th:1}--\ref{th:3} can be adapted 
with no difficulty to the more general case where $u,w\in L^\infty(\Omega\times Q)$ (not necessarily $0 \leq u,w \leq 1$). 
Consequently, it is natural to define the reduced cost functional
\[
  \tilde J: \tilde{\mathcal U} \to [0,+\infty)\,, \qquad
  \tilde J(u,w):=J(S(u,w), u,w)\,, \quad (u,w)\in\tilde{\mathcal U}\,.
\]
The optimal control problem associated to the state system \eqref{eq:1}--\eqref{eq:init}
consists in minimizing the map $\tilde J$ on $\mathcal U$.
Note that even if $J$ is convex, the nonlinearity of $S$ does not ensure that 
$\tilde J$ is convex as well, so that the minimization problem in nontrivial
and uniqueness of optimal controls
is difficult to prove. 
Furthermore, as already mentioned in the introduction, classical  compactness results cannot be directly applied to the stochastic setting and this gives rise 
to the need of a weaker concept of optimality.

We introduce below the notion of \textit{relaxed} optimal control, which is inspired by \cite[Def.~2.4]{barbu-rock-contr}.
\begin{defin}
  An optimal control is a pair $(u,w)\in\mathcal U$ such that 
  \[
  \tilde J(u,w)=\inf_{(v,z)\in\mathcal U}\tilde J(v,z)\,.
  \]
  A relaxed optimal control is a family 
  \[
  ((\Omega',\cF', (\cF'_t)_{t\in[0,T]}, \P'), W_1', \varphi_0', \sigma_0',u',w',
  \varphi', \mu', \sigma', 
  \varphi_Q', \varphi_T')\,,
  \]
  where $(\Omega',\cF',(\cF'_t)_{t\in[0,T]}, \P')$ is a complete filtered probability space,
  $W_1'$ is a $(\cF_t)_t$-cylindrical Wiener process, $(\varphi_0', \sigma'_0)$ is 
  a $\cF_0'$-measurable $(V\times H)$-valued random variable with the same law of 
  $(\varphi_0, \sigma_0)$, $(u',w')$ is a $(\cF'_t)_t$-progressively measurable $H^2$-valued
  process such that $0\leq u',w'\leq1$ almost everywhere in $\Omega'\times Q$,
  $(\varphi', \mu', \sigma')$ is the unique solution to the state system 
  \eqref{phi1}--\eqref{eq_sigma_weak}
  on the probability space $(\Omega'\cF',\P')$ with respect to the data 
  $(\varphi_0', \sigma_0', u', w', W_1')$ (and the choice $\mathcal H\equiv0$), 
  $\beta_1\varphi_Q' \in L^2(\Omega'; L^2(0,T; H))$ has the same law of $\beta_1\varphi_Q$,
  $\beta_2\varphi_T'\in L^6(\Omega',\cF_T'; V)$ has the same law of $\beta_2\varphi_T$, and 
  \begin{align*}
  \tilde J'(u',w')&:=\frac{\beta_1}{2}\E{}'\int_Q|\varphi'-\varphi_Q'|^2
  +\frac{\beta_2}{2}\E{}'\int_D|\varphi'(T)-\varphi_T'|^2
  +\frac{\beta_3}{2}\E{}'\int_D(\varphi'(T) + 1)\\
  &+\frac{\beta_4}{2}\E{}'\int_Q|u'|^2
  +\frac{\beta_5}{2}\E{}'\int_Q|w'|^2
  \leq \tilde J(v,z) \quad\forall\,(v,z)\in\mathcal U\,.
  \end{align*}
\end{defin}

Let us briefly comment on the notion of relaxed optimality.
First notice that the idea of relaxation mimics the concept of weak (or martingale) solutions to stochastic equations, thus involving in the definition the probability space itself along with the Wiener process and the parameters of the model.
This naturally allows for the application of the Skorohod representation theorem.
Moreover, let us observe that by setting $\beta_4 = \beta_5 = 0$ in the definition of the cost functional, the problem reduces to the existence of nearest points of a fixed target in uniformly convex Banach spaces (see \cite{barbu-rock-contr} for further details). 
Finally, given a relaxed optimal control it seems natural to wonder whether it admits or not a \textit{strong} formulation. 
A possible way to tackle the problem could be the combination of a Gy\"ongy-Krylov argument with a uniqueness result for optimal controls.

The first result that we prove in this context is very natural and ensures
that a relaxed optimal control for our problem always exists.
\begin{thm}
  \label{th:4}
  Assume {\em(A1)--(A7)}, \eqref{ip_ref1}--\eqref{ip_ref3}, \eqref{ip_H0} and \eqref{ip_ref4}. Then
  there exists a relaxed optimal control for problem \eqref{eq:1}--\eqref{eq:init}.
\end{thm}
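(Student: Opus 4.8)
The plan is to argue by the direct method adapted to the relaxed stochastic setting, in the spirit of weak-martingale existence proofs: take a minimizing sequence, collect uniform estimates, pass to the limit through a Skorohod representation, and verify lower semicontinuity of the cost. Set $m:=\inf_{(v,z)\in\mathcal U}\tilde J(v,z)$, which is finite since $\tilde J\geq0$ and $\mathcal U\neq\emptyset$ (e.g.\ $(0,0)\in\mathcal U$ gives finite cost by the integrability in \eqref{phi1} and the assumptions on $\varphi_Q,\varphi_T$). Choose $(u_n,w_n)\in\mathcal U$ with $\tilde J(u_n,w_n)\to m$, and let $(\varphi_n,\mu_n,\sigma_n)$ be the corresponding unique solutions provided by Theorems~\ref{th:1}--\ref{th:3}. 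Since $0\leq u_n,w_n\leq1$, the controls are bounded in $L^\infty(\Omega\times Q)^2$, so the a priori bounds underlying Theorems~\ref{th:1} and \ref{th:3} are uniform in $n$: the families $(\varphi_n)_n$, $(\mu_n)_n$, $(\sigma_n)_n$ are bounded, uniformly in $n$, in the spaces appearing in \eqref{phi1}--\eqref{phi_reg}.

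\textbf{Tightness and Skorohod.} I would then look at the laws of the vector $(\varphi_n,\mu_n,\sigma_n,u_n,w_n,\varphi_0,\sigma_0,\varphi_Q,\varphi_T,W_1)$ on a product space, endowing each component with a topology in which the uniform bounds force relative compactness. Using the compact embeddings $Z\cembed V\cembed H\embed V^*$ together with the time-regularity of $\varphi_n-G\cdot W_1$ and of $\sigma_n$ (recall $\mathcal H\equiv0$) in $H^1(0,T;V^*)$, an Aubin--Lions--Simon argument yields tightness of $(\varphi_n)_n$ and $(\sigma_n)_n$ in, e.g., $C^0([0,T];V^*)\cap L^2(0,T;H)$, and of $(\sigma_n)_n$ also in $L^2(0,T;V)$ endowed with its weak topology. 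For $\mu_n$ and the controls $u_n,w_n$, which enjoy only reflexive bounds, I would use the \emph{weak} topologies of the relevant $L^2$-spaces restricted to the bounded (hence metrizable) balls where the sequences live; the initial data and targets are fixed, and $W_1$ is viewed as a continuous process in an enlarged Hilbert space $U_0\supset U_1$ with Hilbert--Schmidt embedding. Markov's inequality applied to the uniform moments gives tightness of the joint law, and the Skorohod representation theorem (in the Jakubowski--Skorohod form accommodating the non-metrizable weak topologies) produces a new probability space $(\Omega',\cF',\P')$ carrying variables $(\varphi_n',\mu_n',\sigma_n',u_n',w_n',\varphi_{0,n}',\sigma_{0,n}',\varphi_{Q,n}',\varphi_{T,n}',W_{1,n}')$, with the same joint laws, converging $\P'$-almost surely in the chosen topologies to a limit $(\varphi',\mu',\sigma',u',w',\varphi_0',\sigma_0',\varphi_Q',\varphi_T',W_1')$.

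\textbf{Passage to the limit.} Letting $(\cF_t')_t$ be the augmented filtration generated by $(\varphi',\sigma',W_1')$, one checks in the standard way that $W_1'$ is a cylindrical Wiener process, that the new variables inherit all the uniform estimates by equality of laws, and that $0\leq u_n',w_n'\leq1$; since $\mathcal U$ is convex and closed, the weak a.s.\ limit obeys $0\leq u',w'\leq1$, and the laws of $(\varphi_{0,n}',\sigma_{0,n}')$ and $(\varphi_{Q,n}',\varphi_{T,n}')$ are preserved. As each $(\varphi_n',\mu_n',\sigma_n')$ satisfies \eqref{eq_phi_weak}--\eqref{eq_sigma_weak} on $\Omega'$ with data $(\varphi_{0,n}',\sigma_{0,n}',u_n',w_n',W_{1,n}')$, it remains to pass to the limit: the linear terms converge by the weak/strong convergences above; the nonlinear terms $h(\varphi_n')$, $\psi'(\varphi_n')$ and the coupling terms $\sigma_n'h(\varphi_n')$, $u_n'h(\varphi_n')$ converge thanks to the strong convergence $\varphi_n'\to\varphi'$ in $L^2(0,T;H)$ (a.s., and in the relevant Bochner spaces by the uniform higher moments and Vitali's theorem) together with (A1)--(A2); the stochastic term $G\cdot W_{1,n}'$ is identified with $G\cdot W_1'$ by the usual martingale-representation argument. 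By the pathwise uniqueness of Theorem~\ref{th:2}, the limit $(\varphi',\mu',\sigma')$ is \emph{the} solution of the state system associated with $(\varphi_0',\sigma_0',u',w',W_1')$.

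\textbf{Lower semicontinuity and conclusion.} Every term of $\tilde J$ is the integral of a continuous function of the variables, convex in $(u,w)$, so its value depends only on the joint law; hence $\tilde J_n'(u_n',w_n')=\tilde J(u_n,w_n)\to m$. Passing to the limit, the tracking terms $\E{}'\int_Q|\varphi'-\varphi_Q'|^2$ and $\E{}'\int_D|\varphi'(T)-\varphi_T'|^2$ and the linear term $\E{}'\int_D(\varphi'(T)+1)$ converge or are lower semicontinuous under the strong convergence of $\varphi_n'$ and the convergence of $\varphi_n'(T)$, while the quadratic control terms $\E{}'\int_Q|u'|^2$, $\E{}'\int_Q|w'|^2$ are weakly lower semicontinuous; altogether $\tilde J'(u',w')\leq\liminf_n\tilde J(u_n,w_n)=m\leq\tilde J(v,z)$ for all $(v,z)\in\mathcal U$. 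Thus $((\Omega',\cF',(\cF_t')_t,\P'),W_1',\varphi_0',\sigma_0',u',w',\varphi',\mu',\sigma',\varphi_Q',\varphi_T')$ is a relaxed optimal control. I expect the main obstacle to be the passage to the limit in the stochastic integral and in the potential term $\psi'(\varphi_n')$, both of which rest on the strong convergence of $\varphi_n'$ coming from the Skorohod construction; it is precisely the mere \emph{weak} convergence of the controls $u_n',w_n'$ (for which no compactness is available in the strong topology) that forces the relaxed, rather than strong, notion of optimality.
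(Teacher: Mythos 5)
Your proposal is correct and follows essentially the same route as the paper: a minimizing sequence, uniform a priori bounds, tightness of the joint laws with the controls carried in weak $L^2$-topologies, a Jakubowski--Skorokhod representation, identification of the limit as the unique solution of the state system on the new probability space, and weak lower semicontinuity of the cost. The minor technical variations (Aubin--Lions--Simon phrased explicitly, Vitali's theorem, the enlarged space for $W_1$) are cosmetic; the paper's proof carries out exactly this scheme.
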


Let us now concentrate on necessary conditions for optimality.
To this end, we first introduce the following linearized system
\begin{equation*}
\begin{split}
\partial_t x_k - \Delta y_k  &= h(\varphi)(\mathcal{P}z_k - \alpha k_u) + h'(\varphi)x_k(\mathcal P\sigma - a -\alpha u) \\
y_k &= A \Delta x_k + B \psi''(\varphi)x_k \\
\partial_t z_k - \Delta z_k &+ cz_kh(\varphi) + c\sigma h'(\varphi) x_k + b(z_k - k_w) = 0,
\end{split}
\end{equation*}
complemented with homogeneous Neumann boundary conditions for $x_k,y_k$ and $z_k$ with initial conditions given by $x_k(0) = z_k(0) = 0$.
Existence and uniqueness of variational solutions to the above system
is the content of the following theorem.
\begin{thm}
\label{th:5}
  Assume {\em(A1)--(A7)}, \eqref{ip_ref1}--\eqref{ip_ref3}, \eqref{ip_H0} and \eqref{ip_ref4}.
  Let $(u,w)\in\tilde{\mathcal U}$, $k:=(k_u,k_w)\in\tilde{\mathcal U}$ and set $\varphi:= S(u,w)$. Then
  there exists a unique triple $(x_{k},y_{k},z_{k})$ with
  \begin{gather}
    \label{xy}
    x_{k} \in L^p(\Omega; H^1(0,T; Z^*)\cap L^2(0,T; Z))\,, \qquad y_{k}\in L^p(\Omega; L^2(0,T; H))\,,\\
    \label{z}
    z_{k}\in L^p(\Omega; H^1(0,T; V^*)\cap L^2(0,T; V))\,,
  \end{gather}
  such that $x_{k}(0)=z_{k}(0)=0$ and
  \begin{gather}
  \label{lin1}
  \ip{\partial_t x_{k}}{\zeta}_V -\int_Dy_{k}\Delta\zeta = 
  \int_D\left[h(\varphi)(\mathcal Pz_k - \alpha k_u) + h'(\varphi)x_k(\mathcal P\sigma - a -\alpha u)\right]\zeta\,,\\
  \label{lin2}
  \int_Dy_k\zeta=A\int_D\nabla x_k\cdot \nabla\zeta +B \int_D\psi''(\varphi)x_k\zeta\,,\\
  \label{lin3}
  \ip{\partial_t z_k}{\zeta}_V + \int_D\nabla z_k\cdot\nabla\zeta
  +\int_D\left[cz_k h(\varphi) + c\sigma h'(\varphi)x_k + b(z_k-k_w)\right]\zeta=0
  \end{gather}
   for every $\zeta\in Z$, for almost every $t\in(0,T)$, $\P$-almost surely.
\end{thm}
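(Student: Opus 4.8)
The plan is to construct the solution by a Galerkin scheme in the eigenbasis $(w_j)_j$ of the Neumann Laplacian (simultaneously orthogonal in $H$, $V$ and $Z$), to derive $n$-independent a priori bounds by energy estimates, and to pass to the limit; uniqueness then follows from the same estimates by linearity. Since \eqref{lin1}--\eqref{lin3} contain no It\^o term (the additive noise in \eqref{eq:1} and the choice $\mathcal H\equiv0$ make the linearised system a \emph{random} PDE, pathwise in time), all energy estimates can be performed $\omega$ by $\omega$ and only afterwards integrated over $\Omega$. For fixed $\omega$ the projected system is a linear ODE whose time-dependent data are integrable in $t$: the coefficients $h(\varphi)$, $h'(\varphi)$, $\mathcal P\sigma-a-\alpha u$, $c\sigma h'(\varphi)$ are bounded by (A1), (A5) and $\sigma\in[0,1]$, while $\psi''(\varphi)$ is controlled through $\varphi\in L^\infty(0,T;V)$ from Theorem~\ref{th:3}. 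Carath\'eodory theory then yields a unique progressively measurable approximation $(x^n,y^n,z^n)$.

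For the a priori bounds I would proceed in two tiers. First, a \emph{low-order} estimate: test \eqref{lin3} with $z^n$ and \eqref{lin1} with $\cN(x^n-(x^n)_D)$, controlling the spatial mean $(x^n)_D$ separately (the proliferation source destroys mass conservation, so I integrate \eqref{lin1} against the constant and estimate $(x^n)_D$ by Gronwall). Using $-\Delta\cN=\mathrm{Id}$ on mean-zero fields together with \eqref{ineq:Ny} and \eqref{Poin-wirt}, this produces the dissipations $A\norm{\nabla x^n}_H^2$ and $\norm{\nabla z^n}_H^2$, while the ``bad'' term $B\int_D\psi''(\varphi)(x^n)^2$ is bounded below by $-BC_2\norm{x^n}_H^2$ using only $\psi''\geq-C_2$ from (A2) and then absorbed via the compactness inequality \eqref{comp_ineq}. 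The genuinely non-monotone terms are the couplings $\mathcal P h(\varphi)z^n$ in \eqref{lin1} and $c\sigma h'(\varphi)x^n$ in \eqref{lin3}: since the $x$-dynamics is dissipative in $V^*$ while the $z$-dynamics is dissipative in $H$, these cannot be matched in one product space, so I would sum the two tested identities and close a \emph{joint} Gronwall inequality in $\norm{x^n-(x^n)_D}_*^2+\norm{z^n}_H^2$, exploiting the boundedness of $h,h',\sigma$. This gives $x^n$ bounded in $L^\infty(0,T;V^*)\cap L^2(0,T;V)$ and $z^n$ in $L^\infty(0,T;H)\cap L^2(0,T;V)$, uniformly in $n$.

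Second, a \emph{higher-order} estimate to reach \eqref{xy}: test \eqref{lin1} with $x^n$ to generate $A\norm{\Delta x^n}_H^2$, the only delicate term being $B\int_D\psi''(\varphi)x^n\,\Delta x^n$. Here I would use $\abs{\psi''(\varphi)}\lesssim1+\abs{\varphi}^2$, the embedding $Z\embed L^\infty(D)$ in dimension three and interpolation between $H$ and $Z$ to bound this by $\tfrac{A}{4}\norm{\Delta x^n}_H^2+g(t)\norm{x^n}_H^2$ with $g$ a power of $\norm{\varphi(t)}_{Z}$; then $y^n=-A\Delta x^n+B\psi''(\varphi)x^n$ is bounded in $L^2(0,T;H)$ and $x^n$ in $L^2(0,T;Z)$, and comparison in the equations yields $\partial_tx^n\in L^2(0,T;Z^*)$ and $\partial_tz^n\in L^2(0,T;V^*)$. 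Passing to the limit is straightforward because the system is linear: the uniform bounds give weak/weak-$*$ limits, and every term of \eqref{lin1}--\eqref{lin3} is linear in $(x^n,y^n,z^n)$ multiplied by the \emph{fixed} coefficients, so no strong convergence is needed to identify the limit; the initial conditions $x(0)=z(0)=0$ pass by weak continuity. Uniqueness then follows by applying the two-tier estimate to the difference of two solutions, which solves the homogeneous system with zero data.

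The main obstacle is the final quantitative step: turning the pathwise estimates into the $L^p(\Omega)$-bounds \eqref{xy}--\eqref{z}. The Gronwall procedure produces exponential factors of the form $\exp\!\big(c\int_0^T g(t)\,\d t\big)$, and the difficulty is to choose the interpolation in the $Z$-estimate so that the resulting exponent is dominated by $\norm{\varphi}_{L^2(0,T;Z)}^2$ (plus bounded quantities)---precisely the norm whose exponential moments are guaranteed by \eqref{phi_reg}. Only then does H\"older's inequality in $\Omega$, combined with \eqref{ip_ref1}, \eqref{ip_ref3} and the polynomial moments of $\varphi$ from Theorem~\ref{th:3}, render the product of the Gronwall exponential with the data norms (the controls in $L^q$) integrable to the required order $p$. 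Keeping the powers of $\varphi$ aligned with the available exponential integrability, while simultaneously handling the non-conservation of the mean and the cross-coupling, is where the real work lies.
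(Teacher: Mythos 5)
Your existence route is genuinely different from the paper's: the paper never constructs the linearized solution directly. It first proves uniqueness by pathwise energy estimates (essentially your low-order tier), and then obtains existence by showing that the difference quotients $\frac1\eps\left(S((u,w)+\eps k)-S(u,w)\right)$ are uniformly bounded thanks to the refined continuous dependence \eqref{cont_dep_3}, extracting weak limits, and upgrading them to strong limits on the original probability space via a Skorokhod plus Gy\"ongy--Krylov argument, so that Theorem~\ref{th:5} and Proposition~\ref{th:6} are proved simultaneously. A direct Galerkin construction in the Neumann eigenbasis is admissible in principle (the system is a pathwise random PDE, and in the eigenbasis $\cN(x^n-(x^n)_D)$ and $\Delta x^n$ are legitimate test functions), and it would have the merit of decoupling existence from the differentiability of $S$; however, two of your quantitative steps fail as stated.

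First, in the low-order tier you claim the only bad term is $B\int_D\psi''(\varphi)|x^n|^2$, handled by $\psi''\geq-C_2$ and \eqref{comp_ineq}. Testing \eqref{lin1} by $\cN(x^n-(x^n)_D)$ and inserting \eqref{lin2} actually produces $B\int_D\psi''(\varphi)\,x^n\,(x^n-(x^n)_D)$, whose off-diagonal part $-B(x^n)_D\int_D\psi''(\varphi)x^n$ has no sign and is \emph{not} controlled by $\psi''\geq-C_2$: since $\psi''(\varphi)$ is unbounded, the cheap bound $|(x^n)_D|\,\norm{\psi''(\varphi)}_H\norm{x^n}_H$ puts (after absorbing $\norm{x^n}_H^2$ by \eqref{comp_ineq}) a coefficient of order $1+\norm{\varphi}_V^4$ into the Gronwall factor, and $\exp\left(cT\left(1+\norm{\varphi}_{L^\infty(0,T;V)}^4\right)\right)$ has no finite moments under the standing assumptions. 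The paper's way out, in its uniqueness proof, is the $V$--$V^*$ duality bound $\left|(x^n)_D\int_D\psi''(\varphi)x^n\right|\leq |(x^n)_D|\,\norm{x^n}_{V^*}\norm{\psi''(\varphi)}_V$ combined with $\norm{\psi''(\varphi)}_V\lesssim1+\norm{\varphi}_Z^2$, which is where \eqref{ip_ref3} is really used; this is precisely the term that forces the Gronwall exponent to be $\norm{\varphi}_{L^2(0,T;Z)}^2$ and hence the appeal to \eqref{phi_reg}. Your proposal locates the exponential-alignment problem in the wrong tier and omits the one estimate that resolves it.

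Second, your higher-order step --- bounding $B\int_D\psi''(\varphi)x^n\Delta x^n$ by $\tfrac A4\norm{\Delta x^n}_H^2+g(t)\norm{x^n}_H^2$ with $g$ a power of $\norm{\varphi(t)}_Z$ and then applying Gronwall --- cannot be closed. For $\exp\left(\int_0^Tg\right)$ to be controlled by \eqref{phi_reg} you would need $g\lesssim1+\norm{\varphi}_Z^2$; but every H\"older/interpolation split of this trilinear term yields either powers of $\norm{\varphi}_Z$ strictly above $2$ (not in $L^1(0,T)$, since pathwise only $\norm{\varphi}_Z^2$ is integrable) or powers of $\norm{\varphi}_V$ strictly above $2$ inside $g$ (for instance, the natural $L^3$--$L^6$--$L^2$ split gives $g\sim\norm{\psi''(\varphi)}_{L^3(D)}^4\lesssim1+\norm{\varphi}_V^8$), whose exponential again has no finite moments: \eqref{phi_reg} gives exponential moments only of $\norm{\varphi}_{L^2(0,T;Z)}^2$, and \eqref{ip_ref4} only polynomial moments of $\norm{\varphi}_{L^\infty(0,T;V)}$. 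The repair --- and this is exactly what the paper does in Lemma~\ref{lin_gamma} --- is not to Gronwall this term at all: estimate $\norm{\psi''(\varphi)x^n}_{L^2(0,T;H)}\lesssim\left(1+\norm{\varphi}_{L^\infty(0,T;V)}^2\right)\norm{x^n}_{L^2(0,T;V)}$ and use the low-order output multiplicatively, so that the $\varphi$-dependence at this tier stays polynomial and the only exponential factor is the tier-one one; the $L^p(\Omega)$ bound then follows from a three-way H\"older inequality in $\Omega$ with exponents $q/p$, $r/(2p)$, $\beta_0$, where $1-\frac1{\beta_0}=\frac pq+\frac{2p}r<1$ precisely by \eqref{ip_ref4}.
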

Furthermore, the control-to-state map $\mathcal S$ is differentiable in a 
suitable sense and we characterize its derivative as the unique solution to the linearized system.
\begin{prop}
  \label{th:6}
  Assume {\em(A1)--(A7)}, \eqref{ip_ref1}--\eqref{ip_ref3}, \eqref{ip_H0} and \eqref{ip_ref4}.
  Then the control-to-state map $S:\tilde{\mathcal U}\to 
  L^p(\Omega; C^0([0,T]; H)\cap L^2(0,T; Z))$ is G\^ateaux-differentiable in the following sense:
  for every $(u,w)\in\tilde{\mathcal U}$ and $(k_u,k_w)\in L^q(\Omega; L^2(0,T; H))^2$ we have
  \begin{align*}
  \frac{S((u,w)+\eps(k_u,k_w))-S(u,w)}{\eps}\to x_k \qquad&\text{in }
  L^\ell(\Omega; L^2(0,T; V)) \quad\forall\,\ell\in[1,p)\,,\\
  \frac{S((u,w)+\eps(k_u,k_w))-S(u,w)}{\eps}\wto x_k \qquad&\text{in }
  L^p(\Omega; H^1(0,T; Z^*)\cap L^2(0,T; Z))
  \end{align*}
  as $\eps\to0$, where $x_k$ is the unique first solution component to the linearized system
  \eqref{xy}--\eqref{lin3}.
\end{prop}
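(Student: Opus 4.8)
The plan is to linearize by difference quotients and identify their limit with the solution of \eqref{lin1}--\eqref{lin3}, extracting all uniform bounds from the strong continuous-dependence estimate \eqref{cont_dep_3}. Fix $(u,w)\in\tilde{\mathcal U}$ and a direction $(k_u,k_w)$; since $\tilde{\mathcal U}$ is open, for all small $\eps>0$ we have $(u^\eps,w^\eps):=(u,w)+\eps(k_u,k_w)\in\tilde{\mathcal U}$. Denote by $(\varphi^\eps,\mu^\eps,\sigma^\eps)$ and $(\varphi,\mu,\sigma)$ the corresponding states and set $x^\eps:=\eps^{-1}(\varphi^\eps-\varphi)$, $y^\eps:=\eps^{-1}(\mu^\eps-\mu)$, $z^\eps:=\eps^{-1}(\sigma^\eps-\sigma)$. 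Subtracting the two copies of \eqref{eq_phi_weak}--\eqref{eq_sigma_weak} (recall \eqref{ip_H0}, so the $\sigma$-noise is absent and the additive $G\,\d W_1$ cancels), dividing by $\eps$, and writing the increments of $h$ and $\psi'$ through the integral mean-value coefficients
\[
c_h^\eps:=\int_0^1 h'(\varphi+s(\varphi^\eps-\varphi))\,\d s\,,\qquad
c_\psi^\eps:=\int_0^1 \psi''(\varphi+s(\varphi^\eps-\varphi))\,\d s\,,
\]
one checks that $(x^\eps,y^\eps,z^\eps)$ solves a system of the very form \eqref{lin1}--\eqref{lin3}, but with $h'(\varphi)$ replaced by $c_h^\eps$, with $\psi''(\varphi)$ replaced by $c_\psi^\eps$, and with the coupling coefficient $\sigma$ replaced by $\sigma^\eps$; the initial data vanish, $x^\eps(0)=z^\eps(0)=0$.

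Applying \eqref{cont_dep_3} to data differing only by $\eps(k_u,k_w)$ in the controls yields at once
\[
\norm{x^\eps}_{L^p(\Omega;C^0([0,T];H)\cap L^2(0,T;Z))}
+\norm{z^\eps}_{L^p(\Omega;C^0([0,T];H)\cap L^2(0,T;V))}
\lesssim \norm{k_u}_{L^q(\Omega;L^2(0,T;H))}+\norm{k_w}_{L^q(\Omega;L^2(0,T;H))}
\]
\emph{uniformly} in $\eps$: here the constant in \eqref{cont_dep_3} stays finite and $\eps$-independent precisely because the exponential moments \eqref{phi_reg} are bounded uniformly over controls in the bounded set $\tilde{\mathcal U}$, and because the integrability gap $q>p$ leaves exactly the room to absorb these moments by H\"older's inequality. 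A uniform bound for $y^\eps$ in $L^p(\Omega;L^2(0,T;H))$ then follows from the relation $y^\eps=-A\Delta x^\eps+Bc_\psi^\eps x^\eps$, using the $L^2(0,T;Z)$-bound on $x^\eps$ together with the at most quadratic growth of $\psi''$ granted by \eqref{ip_ref3} and, once more, the exponential moments \eqref{phi_reg}. Reading the time derivatives off the equations gives uniform bounds on $\partial_t x^\eps$ in $L^p(\Omega;L^2(0,T;Z^*))$ and on $\partial_t z^\eps$ in $L^p(\Omega;L^2(0,T;V^*))$. Consequently, along a subsequence, $x^\eps\wto\hat x$, $y^\eps\wto\hat y$, $z^\eps\wto\hat z$ in the spaces \eqref{xy}--\eqref{z}.

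To pass to the limit in the difference-quotient weak formulation it remains to treat the variable coefficients. Since $h\in C^1$ with bounded derivative (consistent with the appearance of $h'$ in \eqref{lin1}--\eqref{lin3}) and $\varphi^\eps\to\varphi$ strongly by \eqref{cont_dep_3}, dominated convergence gives $c_h^\eps\to h'(\varphi)$ in every $L^r(\Omega\times Q)$, $r<\infty$; combined with $x^\eps\wto\hat x$ this yields $c_h^\eps x^\eps\wto h'(\varphi)\hat x$, and likewise $\sigma^\eps c_h^\eps x^\eps\wto\sigma h'(\varphi)\hat x$. The genuinely delicate term is $c_\psi^\eps x^\eps$, because $\psi''$ is unbounded: from \eqref{ip_ref3} one estimates $\abs{c_\psi^\eps-\psi''(\varphi)}\lesssim(1+\abs{\varphi}+\abs{\varphi^\eps})\abs{\varphi^\eps-\varphi}$, so that the rate $\eps$ of $\varphi^\eps\to\varphi$ together with the exponential moments \eqref{phi_reg} produces strong convergence $c_\psi^\eps\to\psi''(\varphi)$ in suitable $L^r$-spaces, whence $c_\psi^\eps x^\eps\wto\psi''(\varphi)\hat x$. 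Passing to the limit in all three equations shows that $(\hat x,\hat y,\hat z)$ is a variational solution of \eqref{lin1}--\eqref{lin3} with zero initial data; by the uniqueness part of Theorem~\ref{th:5} we conclude $(\hat x,\hat y,\hat z)=(x_k,y_k,z_k)$, and a standard subsequence argument upgrades this to convergence of the whole family as $\eps\to0$. This establishes the asserted weak convergence in $L^p(\Omega;H^1(0,T;Z^*)\cap L^2(0,T;Z))$.

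Finally, for the strong convergence in $L^\ell(\Omega;L^2(0,T;V))$, $\ell\in[1,p)$, I would estimate the differences $\xi^\eps:=x^\eps-x_k$, $\zeta^\eps:=z^\eps-z_k$ directly: they solve the linearized system \eqref{lin1}--\eqref{lin3} driven only by the \emph{source} terms $(c_h^\eps-h'(\varphi))(\cdots)$, $(c_\psi^\eps-\psi''(\varphi))x^\eps$ and $(\sigma^\eps-\sigma)(\cdots)$, all shown above to tend to $0$. Testing the $\xi^\eps$-equation with $\cN\xi^\eps$ (to exploit the $V^*$-monotone structure of the Cahn--Hilliard part, controlling the spatial mean through the equation as in the well-posedness proof) and the $\zeta^\eps$-equation with $\zeta^\eps$, absorbing the only sign-indefinite contribution via $\psi''(\varphi)\geq-C_2$ from (A2) and the compactness inequality \eqref{comp_ineq}, and closing by a Gronwall argument of the same type as in Theorem~\ref{th:2} (again with the Gronwall constant controlled through \eqref{phi_reg}), one obtains
\[
\E\Bigl(\sup_{t\in[0,T]}\norm{\xi^\eps(t)}_{V^*}^2
+\int_0^T\norm{\nabla\xi^\eps(s)}_H^2\,\d s
+\int_0^T\norm{\zeta^\eps(s)}_V^2\,\d s\Bigr)\xrightarrow[\eps\to0]{}0\,,
\]
that is $\xi^\eps\to0$ strongly in $L^2(\Omega;L^2(0,T;V))$. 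Since $\{\xi^\eps\}$ is bounded in $L^p(\Omega;L^2(0,T;V))$ with $p>\ell$, it is uniformly integrable, so Vitali's theorem promotes this to $\xi^\eps\to0$ in $L^\ell(\Omega;L^2(0,T;V))$ for every $\ell\in[1,p)$, which is the claim. The main obstacle throughout is the exponential growth of $\psi''$: it rules out any pathwise or $L^\infty$ control and forces every bound on the coefficient $c_\psi^\eps$ and every Gronwall constant to be closed in expectation through the exponential moments \eqref{phi_reg}, the admissible slack being exactly the gap $q>p$.
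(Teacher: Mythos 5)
Your proposal is correct, and its first half (difference quotients, uniform bounds deduced from \eqref{cont_dep_3}, extraction of weak limits, identification of the limit through the uniqueness part of Theorem~\ref{th:5}) coincides with the paper's proof. Where you genuinely depart from the paper is in the two limit steps. The paper, keeping only \emph{weak} convergence of the difference quotients, identifies the nonlinear terms via the stochastic compactness package: the compact embeddings $H^1(0,T;Z^*)\cap L^2(0,T;Z)\cembed L^2(0,T;V)$ and $H^1(0,T;V^*)\cap L^2(0,T;V)\cembed L^2(0,T;H)$, the Skorokhod theorem to get a.e.\ convergence of the quotients on an auxiliary probability space (this is what allows passing to the limit in $\int_0^1\psi''(\varphi+\tau(\varphi_\eps-\varphi))\,\d\tau\;\frac{\varphi_\eps-\varphi}{\eps}$), and then the Gy\"ongy--Krylov criterion, fed by uniqueness for the linearized system, to return to $(\Omega,\cF,\P)$; the strong convergence in $L^\ell(\Omega;L^2(0,T;V))$, $\ell<p$, is precisely the convergence in probability given by Gy\"ongy--Krylov upgraded through the uniform $L^p$ bounds. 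You instead exploit that \eqref{cont_dep_3} already provides \emph{strong} convergence with rate $\eps$ of $\varphi_\eps\to\varphi$, so the mean-value coefficients $c^\eps_h,c^\eps_\psi$ converge strongly (for $c^\eps_\psi$ via $|\psi'''(r)|\lesssim 1+|r|$ and the uniform moments of $\varphi,\varphi_\eps$), and strong-times-weak convergence identifies the limits without ever changing probability space; the strong convergence of $x^\eps-x_k$ is then recovered by a continuous-dependence energy estimate for the linearized system itself, of exactly the type of Section~\ref{ssec:ref_cont} (pathwise Gronwall with coefficient $1+\norm{\varphi}_Z^2$, exponential moments \eqref{phi_reg}, H\"older in $\Omega$), followed by Vitali. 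Both routes are valid; yours is the more elementary one --- Skorokhod/Gy\"ongy--Krylov is really designed for settings where only tightness, and no rate, is available --- at the price of the extra energy estimate on $\xi^\eps$, where the non-conserved spatial mean must indeed be handled separately, as you indicate. One bookkeeping point you should make explicit: the uniform $L^p(\Omega;L^2(0,T;H))$ bound on $y^\eps$ (and likewise the decay of the source $(c^\eps_\psi-\psi''(\varphi))x^\eps$ in a sufficiently high $L^m(\Omega)$ norm) cannot follow from H\"older applied to the $L^p$ bound on $x^\eps$ together with moments of $\varphi$ alone, since that produces an exponent strictly below $p$; as in the paper, one must route the argument through the pathwise factorized estimate \eqref{cont_dep_aux2}, which puts the $k$-dependence in $L^q(\Omega)$ and the solution-dependence into an exponential with all moments finite, and then apply the three-exponent H\"older inequality permitted by $\frac1q+\frac2r<\frac1p$. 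Your invocation of ``the exponential moments and the gap $q>p$'' is the right idea; it just needs to be anchored to \eqref{cont_dep_aux2} rather than to \eqref{cont_dep_3}.
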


The next result is a first version of necessary conditions for optimality.
\begin{prop}
  \label{th:7}
  Assume {\em(A1)--(A7)}, \eqref{ip_ref1}--\eqref{ip_ref3}, \eqref{ip_H0} and \eqref{ip_ref4}.
  Let $(\bar u, \bar w)\in \mathcal U$ be an optimal control and 
  let $\bar\varphi:=S(\bar u,\bar w)$. Then,
  for every $(u,w)\in\mathcal U$, setting 
  $k:=(k_u,k_w):=(u-\bar u, w-\bar w)$, we have 
  \[
  \beta_1\E\int_Q(\bar\varphi-\varphi_Q)x_k
  +\beta_2\E\int_D(\bar\varphi(T)-\varphi_T)x_k(T)
  +\frac{\beta_3}2\E\int_Dx_k(T)
  +\beta_4\E\int_Q\bar uk_u 
  +\beta_5\E\int_Q\bar w k_w
  \geq 0\,,
  \]
  where $x_k$ is the unique first solution component to the linearized system \eqref{xy}--\eqref{lin3}.
\end{prop}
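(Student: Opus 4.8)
The plan is to exploit the convexity of $\mathcal U$ together with the optimality of $(\bar u, \bar w)$ and the G\^ateaux differentiability of the control-to-state map established in Proposition~\ref{th:6}. Since $\mathcal U$ is convex, for every $(u,w)\in\mathcal U$ and every $\eps\in(0,1]$ the perturbed control $(\bar u + \eps k_u, \bar w + \eps k_w)$ still belongs to $\mathcal U$, where $k=(k_u,k_w)=(u-\bar u, w-\bar w)$. Optimality of $(\bar u, \bar w)$ then yields $\tilde J(\bar u + \eps k_u, \bar w + \eps k_w) \geq \tilde J(\bar u, \bar w)$, so that
\[
\frac{\tilde J(\bar u + \eps k_u, \bar w + \eps k_w) - \tilde J(\bar u, \bar w)}{\eps}\geq 0 \qquad\forall\,\eps\in(0,1]\,.
\]
The whole argument then reduces to computing the limit of this difference quotient as $\eps\to0^+$ and recognizing it as the left-hand side of the asserted variational inequality.

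Writing $\varphi^\eps := S(\bar u + \eps k_u, \bar w + \eps k_w)$ and $\bar\varphi := S(\bar u, \bar w)$, I would expand the five contributions to $\tilde J$ separately. The two control-penalization terms are purely algebraic, since for instance
\[
\frac{\beta_4}{2\eps}\E\int_Q\bigl(|\bar u + \eps k_u|^2 - |\bar u|^2\bigr)
= \beta_4\E\int_Q\bar u\,k_u + \frac{\eps\,\beta_4}{2}\E\int_Q|k_u|^2
\longrightarrow \beta_4\E\int_Q\bar u\,k_u\,,
\]
and analogously for the $\beta_5$-term. For the distributed tracking term I would factor
\[
\frac{\beta_1}{2\eps}\E\int_Q\bigl(|\varphi^\eps-\varphi_Q|^2 - |\bar\varphi - \varphi_Q|^2\bigr)
= \frac{\beta_1}{2}\E\int_Q\bigl(\varphi^\eps+\bar\varphi-2\varphi_Q\bigr)\frac{\varphi^\eps-\bar\varphi}{\eps}
\]
and pass to the limit using that $\varphi^\eps\to\bar\varphi$ strongly by the continuous dependence estimate \eqref{cont_dep_3}, while $\eps^{-1}(\varphi^\eps-\bar\varphi)\to x_k$ strongly in $L^\ell(\Omega; L^2(0,T;V))$ for $\ell<p$ by Proposition~\ref{th:6}; a H\"older argument then gives the limit $\beta_1\E\int_Q(\bar\varphi-\varphi_Q)x_k$.

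The delicate contributions are those evaluated at the final time $T$. Factoring as above,
\[
\frac{\beta_2}{2\eps}\E\int_D\bigl(|\varphi^\eps(T)-\varphi_T|^2 - |\bar\varphi(T)-\varphi_T|^2\bigr)
= \frac{\beta_2}{2}\E\int_D\bigl(\varphi^\eps(T)+\bar\varphi(T)-2\varphi_T\bigr)\frac{\varphi^\eps(T)-\bar\varphi(T)}{\eps}\,,
\]
while the tumor-size term contributes $\tfrac{\beta_3}{2\eps}\E\int_D(\varphi^\eps(T)-\bar\varphi(T))$. Here only the weak convergence $\eps^{-1}(\varphi^\eps-\bar\varphi)\wto x_k$ in $L^p(\Omega; H^1(0,T;Z^*)\cap L^2(0,T;Z))$ from Proposition~\ref{th:6} is available. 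Since the evaluation map $v\mapsto v(T)$ is linear and continuous from this space into $L^p(\Omega; H)$, through the Gelfand-triple embedding $H^1(0,T;Z^*)\cap L^2(0,T;Z)\hookrightarrow C^0([0,T];H)$, it is weakly continuous, whence $\eps^{-1}(\varphi^\eps(T)-\bar\varphi(T))\wto x_k(T)$ in $L^p(\Omega; H)$. This is exactly where I expect the main obstacle: to pair this weak limit against the coefficient $\varphi^\eps(T)+\bar\varphi(T)-2\varphi_T$, I would verify that the latter converges strongly to $2(\bar\varphi(T)-\varphi_T)$ in $L^{p'}(\Omega; H)$, which follows from $\varphi^\eps(T)\to\bar\varphi(T)$ in $L^p(\Omega;H)$ via \eqref{cont_dep_3}, from the hypothesis $\beta_2\varphi_T\in L^6(\Omega;V)$, and from the standing assumption $p\geq6$ that guarantees $p'\leq 6\leq p$. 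The resulting weak--strong duality pairing then produces $\beta_2\E\int_D(\bar\varphi(T)-\varphi_T)x_k(T)$, and pairing the same weak limit against the constant $1\in L^{p'}(\Omega;H)$ yields $\tfrac{\beta_3}{2}\E\int_Dx_k(T)$. Summing the five limits and using the nonnegativity of the difference quotient gives the claimed inequality.
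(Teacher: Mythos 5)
Your proposal is correct and follows essentially the same route as the paper's proof: convexity of $\mathcal U$ plus optimality to obtain a nonnegative difference quotient, the algebraic expansion of the quadratic terms (your midpoint factorization $\tfrac12(\varphi^\eps+\bar\varphi)-\varphi_Q$ coincides with the paper's mean-value integral), and passage to the limit via Proposition~\ref{th:6}, using the strong convergence of the difference quotients for the bulk terms and the weak convergence of the evaluations at $t=T$ (through the embedding $H^1(0,T;Z^*)\cap L^2(0,T;Z)\hookrightarrow C^0([0,T];H)$) paired against strongly convergent coefficients for the terminal terms. Your treatment of the weak--strong pairing at the final time is in fact slightly more explicit than the paper's, but it is the same argument.
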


In the last results of the paper we show how to remove 
the dependence on $x_k$ in the first-order necessary conditions
for optimality. To this end, we analyse the corresponding adjoint 
problem, which is a system of backward SPDEs of the form
\begin{equation*}
\begin{split}
  -\d\pi - A\Delta \tilde \pi\,\d t + B\psi''(\varphi)\tilde \pi\,\d t &= 
  h'(\varphi)(\mathcal P\sigma - a - \alpha u)\pi\,\d t
  - ch'(\varphi)\sigma \rho\,\d t 
  +\beta_1(\varphi-\varphi_Q)\,dt-\xi\,\d W_1\,,\\
  \tilde \pi &=-\Delta \pi\,,\\
  -\d\rho -\Delta \rho\,\d t &+ ch(\varphi)\rho\,\d t + b\rho\,\d t=\mathcal P h(\varphi)\pi\,\d t - \theta\,\d W_2\,,
\end{split}
\end{equation*}
complemented with homogeneous Neumann boundary conditions for $\pi$, $\tilde \pi$ and $\rho$,
with final conditions
\[
  \pi(T)=\beta_2(\varphi(T)-\varphi_T) + \frac{\beta_3}2\,, \qquad
  \rho(T)=0\,.
\]
The last two results 
deal with the existence and uniqueness of solutions to the adjoint system, 
and with the simplified version of the first-order necessary conditions
for optimality, respectively.

\begin{thm}
  \label{th:8}
  Assume {\em(A1)--(A7)}, \eqref{ip_ref1}--\eqref{ip_ref3}, \eqref{ip_H0} and \eqref{ip_ref4}.
  Let $(u,w)\in\tilde{\mathcal U}$ be fixed and set $\varphi:=S(u,w)$. 
  Then there exists a unique quintuplet $(\pi,\tilde \pi, \xi, \rho, \theta)$ with 
  \begin{gather}
    \label{pi}
    \pi \in L^2(\Omega;C^0([0,T]; V)\cap L^2(0,T; Z\cap H^3(D)))\,,\\
    \label{pi_tilde}
    \tilde \pi \in L^2(\Omega;C^0([0,T]; V^*)\cap L^2(0,T; V))\,,\\
    \label{rho}
    \rho \in L^2(\Omega;C^0([0,T]; H)\cap L^2(0,T; V))\,,\\
    \label{xi-sig}
    \xi \in L^2(\Omega; L^2(0,T; \cL^2(U_1,V)))\,, \qquad
    \theta\in L^2(\Omega; L^2(0,T; \cL^2(U_2,H)))\,,
  \end{gather}
  such that, for every $\zeta\in V$,
  \begin{gather}
  \label{ad1}
  \begin{split}
    &\int_D\pi(t)\zeta + A\int_t^T\!\!\int_D\nabla\tilde\pi(s)\cdot \nabla\zeta\,\d s 
    + B\int_t^T\!\!\int_D\psi''(\varphi(s))\tilde\pi(s)\zeta\,\d s
    =-\int_D\left(\int_t^T\xi(s)\,\d W_1(s)\right)\zeta\\
    &\qquad+\int_t^T\!\!\int_D\left[h'(\varphi(s))(\mathcal P\sigma(s)-a-\alpha u(s))\pi(s)
    -ch'(\varphi(s))\sigma(s)\rho(s)
    +\beta_1(\varphi-\varphi_Q)(s)\right]\zeta\,\d s\,,
  \end{split}\\
  \label{ad2}
  \ip{\tilde\pi(t)}{\zeta}_V=\int_D\nabla\pi(t)\cdot\nabla\zeta\,,\\
  \label{ad3}
  \begin{split}
  &\int_D\rho(t)\zeta + \int_t^T\!\!\int_D\nabla\rho(s)\cdot\nabla\zeta\,\d s
  +\int_t^T\!\!\int_D\left[ch(\varphi(s))\rho(s)+b\rho(s)\right]\zeta\,\d s\\
  &\qquad=
  \mathcal P\int_t^T\!\!\int_Dh(\varphi(s))\pi(s)\zeta\,\d s -
  \int_D\left(\int_t^T\theta(s)\,\d W_2(s)\right)\zeta
  \end{split}
  \end{gather}
  for every $t\in[0,T]$, $\P$-almost surely.
\end{thm}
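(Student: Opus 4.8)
The plan is to construct the solution by approximation, obtaining all the uniform estimates not by energy methods on the backward system itself --- which is hopeless because of the unbounded coefficient $\psi''(\varphi)$ multiplying $\tilde\pi$ --- but through the abstract duality with the linearized system of Theorem~\ref{th:5}, in the spirit of \cite{fuhr-orr}. First I would regularize the coefficient: for $n\in\enne$ set $\psi''_n:=\max\{-C_2,\min\{\psi''(\cdot),n\}\}$, which is bounded, still bounded below by $-C_2$, and converges to $\psi''$ pointwise. Replacing $\psi''(\varphi)$ by $\psi''_n(\varphi)$ in the $\pi$-equation yields a linear backward system with bounded coefficients, which I would solve by a spatial Galerkin scheme built on the eigenbasis of $-\Delta$ with homogeneous Neumann conditions: each finite-dimensional projection is a coupled system of linear backward SDEs with square-integrable terminal data, whose unique adapted solution and martingale integrands are furnished by the classical theory of BSDEs together with the martingale representation theorem (recall that $(\cF_t)_t$ is generated by $(W_1,W_2)$). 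Energy estimates --- now legitimate, since the coefficient $\psi''_n(\varphi)$ is bounded and the bi-Laplacian $A\Delta^2$ produces the coercive term $A\|\Delta\pi_n\|_H^2$ --- let me pass to the Galerkin limit and obtain, for each fixed $n$, a unique solution $(\pi_n,\tilde\pi_n,\xi_n,\rho_n,\theta_n)$ of the regularized adjoint system enjoying the regularity \eqref{pi}--\eqref{xi-sig}.

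The heart of the argument is the derivation of bounds on $(\pi_n,\rho_n)$ that are \emph{uniform} in $n$. To this end I would introduce the \emph{generalized} linearized system, namely \eqref{lin1}--\eqref{lin3} with coefficient $\psi''_n(\varphi)$, with $k_u=k_w=0$, and with arbitrary progressively measurable forcings $g_1,g_3$ inserted in the right-hand sides of the $x$- and $z$-equations; by linearity the proof of Theorem~\ref{th:5} adapts verbatim and, crucially, delivers a continuous-dependence estimate of $(x,z)$ (including the terminal value $x(T)$) in terms of $\|g_1\|,\|g_3\|$ that is uniform in $n$, since it uses only the lower bound $\psi''_n\geq-C_2$ together with the exponential-moment bounds \eqref{phi_reg} of Theorem~\ref{th:3} to close the Gr\"onwall argument. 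Applying It\^o's formula to $t\mapsto\ip{\pi_n(t)}{x(t)}_H+\ip{\rho_n(t)}{z(t)}_H$ and taking expectations, the fourth-order terms and the terms carrying $\psi''_n(\varphi)$ cancel between the forward and backward dynamics, and so do the coupling contributions $\mathcal P h(\varphi)\pi_n\,z$ and $ch'(\varphi)\sigma\,x\,\rho_n$; using $x(0)=z(0)=0$ and $\rho_n(T)=0$ one is left with the identity
\begin{equation*}
\E\int_0^T\!\!\int_D\bigl(g_1\pi_n+g_3\rho_n\bigr)
=\E\int_D\Bigl(\beta_2(\varphi(T)-\varphi_T)+\tfrac{\beta_3}2\Bigr)x(T)
+\beta_1\E\int_Q(\varphi-\varphi_Q)\,x\,.
\end{equation*}
Since the right-hand side is controlled by the data together with the uniform linearized estimate, and since $g_1,g_3$ are arbitrary, taking suprema over unit balls bounds $(\pi_n,\rho_n)$ --- hence $\tilde\pi_n=-\Delta\pi_n$ --- uniformly in $n$ in the spaces \eqref{pi}--\eqref{rho}; the bounds on $(\xi_n,\theta_n)$ then follow from the It\^o isometry applied to the martingale parts of the equations.

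With these uniform estimates in hand I would extract weakly-$*$ convergent subsequences and pass to the limit in the (linear) regularized equations; the only term requiring attention is $\psi''_n(\varphi)\tilde\pi_n$, which I would handle by the strong convergence $\psi''_n(\varphi)\to\psi''(\varphi)$ (dominated convergence, with the exponential moments \eqref{phi_reg} and the growth \eqref{ip_ref3} providing the dominating function) paired with the weak convergence $\tilde\pi_n\rightharpoonup\tilde\pi$, obtaining a solution with the asserted regularity \eqref{pi}--\eqref{xi-sig}. Uniqueness is immediate from linearity and the same duality: the difference of two solutions solves the homogeneous backward system with zero terminal datum, whence the displayed identity forces $\E\int_Q(g_1\pi+g_3\rho)=0$ for every $g_1,g_3$, so that $\pi=\rho=0$ and then $\xi=\theta=0$. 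The main obstacle is precisely the uniform estimate: one cannot test the backward system directly, because integrating by parts in the $B\psi''(\varphi)\tilde\pi$ term produces an uncontrollable $\psi'''(\varphi)\nabla\varphi\cdot\nabla\pi$ contribution, and it is only the exact cancellation of the potential terms in the forward--backward duality --- combined with the uniform continuous dependence of the linearized system and the limited $H^3$-in-space, $L^{p/3}$-in-$\Omega$ regularity available for $\varphi$ via \eqref{phi_H3} --- that makes the estimate go through.
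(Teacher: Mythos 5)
Your skeleton coincides with the paper's: truncation $\psi''_n$ of $\psi''$, solution of the resulting linear backward SPDE with bounded coefficients by classical theory, uniform-in-$n$ estimates via duality with a linearized system carrying arbitrary forcings (the paper's Lemma~\ref{lin_gamma} and its duality identity \eqref{eq:duality}), limit passage by pairing the strong convergence of $\psi''_n(\varphi)$ with the weak convergence of $\tilde\pi_n$, and uniqueness by the same duality with zero data. However, there is a genuine gap at the central step. Duality can only bound $(\pi_n,\rho_n)$ in duals of the spaces in which the forcings range, and the linearized estimates actually available (forcings in $L^{6/5}(\Omega;L^1(0,T;V^*))\times L^{6/5}(\Omega;L^1(0,T;H))$, as in \eqref{aux_x}--\eqref{aux_x'}) yield precisely the space-time Lebesgue bound \eqref{ad_est1}, i.e.\ $\norm{\pi_n}_{L^6(\Omega\times(0,T);V)}+\norm{\rho_n}_{L^6(\Omega\times(0,T);H)}\leq M$, and nothing more. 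Your claim that taking suprema over unit balls bounds $(\pi_n,\rho_n)$ ``in the spaces \eqref{pi}--\eqref{rho}'' is not attainable this way: the sup-in-time regularity, the $L^2(0,T;Z\cap H^3(D))$ bound on $\pi_n$ (equivalently the $L^2(0,T;V)$ bound on $\tilde\pi_n=-\Delta\pi_n$), and any bound on $(\xi_n,\theta_n)$ are invisible to the duality identity --- the martingale integrands are wiped out by the expectation, and higher spatial regularity would require solving the linearized system with $H^{-3}$-type forcings, which Lemma~\ref{lin_gamma} does not provide. Likewise, ``It\^o isometry applied to the martingale parts'' by comparison in the equation could at best produce $\xi_n$ in a $\cL^2(U_1,V^*)$-type space, not the $\cL^2(U_1,V)$ regularity required in \eqref{xi-sig}.

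The paper closes exactly this gap with a second step that you explicitly discard as hopeless: a direct It\^o/energy estimate on the backward system, namely It\^o's formula for the sum $\frac12\norm{\pi_n}_V^2+\frac12\norm{\rho_n}_H^2$ computed jointly. This is feasible, and your stated obstruction never arises, because the derivative of $\frac12\norm{\cdot}_V^2$ at $\pi_n$ is $\pi_n+\tilde\pi_n$, so the potential term is tested against $\tilde\pi_n$ and $\pi_n$ directly and no integration by parts producing $\psi'''(\varphi)\nabla\varphi\cdot\nabla\pi_n$ is ever performed. The quadratic term $-B\int_D\psi''_n(\varphi)|\tilde\pi_n|^2$ is controlled by the one-sided bound $\psi''_n\geq-C_2$ plus interpolation, while the cross term $-B\int_D\psi''_n(\varphi)\tilde\pi_n\pi_n$ is controlled by the growth \eqref{ip_ref3} ($|\psi''(\varphi)|\lesssim1+|\varphi|^2$), the embedding $V\embed L^6(D)$, and --- crucially --- the $L^6(\Omega\times(0,T);V)$ bound on $\pi_n$ already obtained from duality. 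This energy identity is also what produces the uniform bounds on $\xi_n$ in $\cL^2(U_1,V)$ and $\theta_n$ in $\cL^2(U_2,H)$, since these appear on its left-hand side, and (after a Burkholder--Davis--Gundy argument and elliptic regularity) the $L^\infty$-in-time and $H^3$-in-space bounds \eqref{ad_est4}--\eqref{ad_est5}. It is the two-step combination --- duality for the low-order bound, then the energy estimate exploiting it --- that delivers uniform bounds in the topologies of \eqref{pi}--\eqref{xi-sig}; without it, your limit passage has no compactness in the right spaces and the construction does not go through.
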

The final version of the stochastic maximum principle reads as follows
\begin{thm}
  \label{th:9}
  Assume {\em(A1)--(A7)}, \eqref{ip_ref1}--\eqref{ip_ref3}, \eqref{ip_H0} and \eqref{ip_ref4}.
  Let $(\bar u, \bar w)\in \mathcal U$ be an optimal control and 
  let $\bar\varphi:=\mathcal S(\bar u,\bar w)$. Then
  \[
  \E\int_Q(\beta_4\bar u - \alpha h(\bar\varphi)\pi)(u-\bar u) + \E\int_Q(\beta_5\bar w +b\rho)(w-\bar w)\geq 0
  \qquad\forall\,(u,w)\in\mathcal U\,,
  \]
  where $(\pi,\rho)$ are the first-component solutions to the adjoint system \eqref{ad1}-\eqref{ad3}.
  In particular, if $\beta_4>0$ and $\beta_5>0$ then $(\bar u, \bar w)$ is the orthogonal 
  projection of $(\frac\alpha{\beta_4}h(\bar\varphi)\pi, -\frac{b}{\beta_5}\rho)$ on $\mathcal U$\,.
\end{thm}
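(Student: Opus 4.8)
The plan is to eliminate the dependence on $x_k$ in the variational inequality of Proposition~\ref{th:7} by exploiting the duality between the linearized system \eqref{lin1}--\eqref{lin3} and the adjoint system \eqref{ad1}--\eqref{ad3}. Fixing the optimal control $(\bar u,\bar w)$ with $\bar\varphi:=S(\bar u,\bar w)$ and an arbitrary $(u,w)\in\mathcal U$, I set $k:=(k_u,k_w):=(u-\bar u,w-\bar w)$ and let $(x_k,y_k,z_k)$ solve the linearized system and $(\pi,\tilde\pi,\xi,\rho,\theta)$ solve the adjoint system. The central object is the evolution of
\[
  \Phi(t):=\int_D\pi(t)x_k(t)+\int_D\rho(t)z_k(t)\,.
\]
Since the noise in \eqref{eq:1} is additive and $\mathcal H\equiv0$, the linearized dynamics carries no stochastic integral, so $x_k,z_k$ are of finite variation in time and the It\^o product rule gives $\d\Phi=\int_Dx_k\,\d\pi+\int_D\pi\,\d x_k+\int_Dz_k\,\d\rho+\int_D\rho\,\d z_k$ with no cross-variation terms. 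Integrating over $[0,T]$ and taking expectation, the martingale integrals $\int_Dx_k\,\xi\,\d W_1$ and $\int_Dz_k\,\theta\,\d W_2$ vanish in mean; the initial conditions $x_k(0)=z_k(0)=0$ give $\Phi(0)=0$, while the final conditions $\pi(T)=\beta_2(\bar\varphi(T)-\varphi_T)+\tfrac{\beta_3}2$ and $\rho(T)=0$ give $\E\Phi(T)=\beta_2\E\int_D(\bar\varphi(T)-\varphi_T)x_k(T)+\tfrac{\beta_3}2\E\int_Dx_k(T)$.

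Next I would expand the drift terms through the strong forms of the two systems and integrate by parts in space using the homogeneous Neumann conditions. The self-adjointness of the Cahn--Hilliard structure forces the fourth-order contributions to cancel: $\int_D\pi\,\Delta y_k$ matches $\int_Dx_k\,A\Delta^2\pi$ (using $y_k=-A\Delta x_k+B\psi''(\bar\varphi)x_k$ from \eqref{lin2} and $\tilde\pi=-\Delta\pi$ from \eqref{ad2}), and the $B\psi''(\bar\varphi)$ terms cancel likewise. The second-order terms $\int_D\rho\,\Delta z_k$ and $-\int_Dz_k\,\Delta\rho$ cancel, as do the reaction terms in $c\,h(\bar\varphi)$ and $b$. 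The couplings cancel as follows: the $h'(\bar\varphi)(\mathcal P\sigma-a-\alpha\bar u)$ terms cancel between the two $\pi$--$x_k$ contributions; the $\mathcal P\,h(\bar\varphi)$ cross terms cancel between \eqref{lin1} and \eqref{ad3}; and the $c\,\sigma\,h'(\bar\varphi)$ cross terms cancel between \eqref{lin3} and \eqref{ad1}. What survives is exactly
\[
  \beta_2\E\!\int_D(\bar\varphi(T)-\varphi_T)x_k(T)+\tfrac{\beta_3}2\E\!\int_Dx_k(T)+\beta_1\E\!\int_Q(\bar\varphi-\varphi_Q)x_k=\E\!\int_Q\bigl(-\alpha h(\bar\varphi)\pi\,k_u+b\rho\,k_w\bigr)\,,
\]
which reproduces precisely the full $x_k$-dependent part of the inequality of Proposition~\ref{th:7}.

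Substituting this identity into Proposition~\ref{th:7} and recalling $k=(u-\bar u,w-\bar w)$ yields
\[
  \E\int_Q(\beta_4\bar u-\alpha h(\bar\varphi)\pi)(u-\bar u)+\E\int_Q(\beta_5\bar w+b\rho)(w-\bar w)\geq0\qquad\forall\,(u,w)\in\mathcal U\,,
\]
which is the asserted maximum principle. For the final characterization, when $\beta_4,\beta_5>0$ I rewrite the two summands as $\beta_4\E\int_Q(\bar u-\tfrac{\alpha}{\beta_4}h(\bar\varphi)\pi)(u-\bar u)$ and $\beta_5\E\int_Q(\bar w+\tfrac{b}{\beta_5}\rho)(w-\bar w)$; since $\mathcal U$ is a closed convex product set and the inequality holds for every admissible pair, the choices $w=\bar w$ and $u=\bar u$ decouple it into two variational inequalities, each being the first-order optimality condition for the orthogonal $L^2(\Omega\times Q)$-projection onto $\mathcal U$, so that $(\bar u,\bar w)$ is the projection of $(\tfrac{\alpha}{\beta_4}h(\bar\varphi)\pi,-\tfrac{b}{\beta_5}\rho)$.

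The hard part will be the rigorous justification of the duality formula, that is, the It\^o product rule for the pairing of the noise-free forward process $(x_k,z_k)$ with the backward process $(\pi,\rho)$ at the available regularity. The fourth-order pairing of $\int_D\pi\,\Delta y_k$ with $\int_Dx_k\,\Delta^2\pi$ sits at the edge of integrability, so I would work within the regularity classes \eqref{xy}--\eqref{z} and \eqref{pi}--\eqref{xi-sig}, in particular using $\pi\in L^2(\Omega;L^2(0,T;Z\cap H^3(D)))$ against $x_k\in L^2(\Omega;L^2(0,T;Z))$, and most safely derive the identity first on a Galerkin/finite-dimensional level (or on a regularized adjoint system), verifying there that the stochastic integrals are genuine mean-zero martingales, and then pass to the limit by means of the stability and continuous-dependence bounds already established for both systems. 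Checking that each integration by parts produces no boundary contribution under the Neumann conditions, and that the covariation between the additive-noise backward variables and the finite-variation forward variables truly vanishes, is the delicate bookkeeping that legitimizes the cancellations above.
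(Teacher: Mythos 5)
Your proposal is correct and follows essentially the same route as the paper: the paper also combines the variational inequality of Proposition~\ref{th:7} with the duality identity between the linearized and adjoint systems, obtained there by choosing $\gamma_1=-\alpha h(\bar\varphi)k_u$, $\gamma_2=b k_w$ in Lemma~\ref{lin_gamma} (whose proof is precisely your product-rule/testing computation, carried out at the level of the approximated adjoint system so that the formal integrations by parts and the vanishing of the stochastic integrals in expectation are justified). Your cancellation bookkeeping, the resulting identity, and the projection characterization all match the paper's argument.
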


\begin{rmk}
  Note that the first-order condition for optimality 
  can be equivalently rewritten pointwise in $\Omega\times[0,T]$
  by using standard localization techniques:
  see e.g.~\cite{YZ99}.
\end{rmk}


\section{Well-posedness of the state system}
\setcounter{equation}{0}
\label{sec:exist}
This section contains the proof of the Theorem~\ref{th:1}.
First of all, we consider an approximated problem where the nonlinearity $\psi'$
is smoothed out through a Yosida-type regularization. Secondly, 
we prove uniform estimates on the approximated solutions in suitable spaces, and
through the theorems of Prokhorov and Skorokhod we are then able to 
show  existence of (probabilistic) weak solutions. Finally,
we prove a pathwise uniqueness result for the original system, yielding 
existence and uniqueness also of strong solutions thanks to a well-known criterion.

\subsection{The approximated problem}
Thanks to assumption (A2), the function $\gamma(r):=\psi'(r) + C_2 r$, $r\in\erre$,
is nondecreasing, hence can be identified with a maximal monotone graph in $\erre\times\erre$.
For any $\lambda>0$, let $\gamma_\lambda$ be its Yosida approximation
and set 
\[
\psi_\lambda'(r):=\gamma_\lambda(r) - C_2 r\,, \quad r\in\erre\,.
\]
We consider the approximated problem 
\begin{align}
  \label{eq:1_app}
  \d\varphi_\lambda - \Delta\mu_\lambda\,\d t 
  = (\mathcal P\sigma_\lambda- a - \alpha u)h(\varphi_\lambda)\,\d t + G\,\d W_1 \qquad&\text{in } (0,T)\times D\,,\\
  \label{eq:2_app}
  \mu_\lambda = -A\Delta\varphi_\lambda + B\psi'_\lambda(\varphi_\lambda) \qquad&\text{in } (0,T)\times D\,,\\
  \label{eq:3_app}
  \d\sigma_\lambda - \Delta\sigma_\lambda \,\d t
  + c\sigma_\lambda h(\varphi_\lambda)\,\d t + b (\sigma_\lambda-w)\,\d t = 
  \mathcal H(\sigma_\lambda)\,\d W_2
  \qquad&\text{in } (0,T)\times D\,,\\
  \label{eq:bound_app}
  \partial_{\bf n}\varphi_\lambda = \partial_{\bf n}\mu_\lambda 
  = \partial_{\bf n}\sigma_\lambda = 0 \qquad&\text{in } (0,T)\times \partial D\,,\\
  \label{eq:init_app}
  \varphi_\lambda(0) = \varphi_0\,, \quad \sigma_\lambda(0)=\sigma_0 \qquad&\text{in } D\,.
\end{align}
We prove existence and uniqueness of a solution $(\varphi_\lambda, \mu_\lambda, \sigma_\lambda)$
through a fixed point argument: we fix a suitable $\phi$ in the third equation in place of $\varphi_\lambda$,
and we solve \eqref{eq:3_app} obtaining thus a solution component $\sigma_\lambda^\phi$
depending on $\phi$. We substitute $\sigma_\lambda^\phi$ into \eqref{eq:1_app}
and we solve \eqref{eq:1_app}--\eqref{eq:2_app}, getting the other two solution components 
$(\varphi_\lambda^\phi, \mu_\lambda^\phi)$ in terms on $\phi$. Finally, we show that the map
$\phi\mapsto\varphi_\lambda^\phi$ is well-defined in a suitable space and is a contraction.

Let us fix a progressively measurable $H$-valued process $\phi$ with
\[
  \phi \in L^2\left(\Omega; L^2(0,T; H)\right)\,.
\]
Since $h$ takes values in $[0,1]$ it is clear now that the initial-value problem
\[
  \begin{cases}
  \d\sigma_\lambda^\phi - \Delta\sigma_\lambda^\phi\,\d t
  + c\sigma_\lambda^\phi h(\phi)\,\d t + b (\sigma^\phi_\lambda-w)\,\d t = 
  \mathcal H(\sigma_\lambda)\,\d W_2
  \qquad&\text{in } (0,T)\times D\,,\\
  \partial_{\bf n}\sigma^\phi_\lambda = 0 \qquad&\text{in } (0,T)\times \partial D\,,\\
  \sigma_\lambda^\phi(0)=\sigma_0 \qquad&\text{in } D\,,
  \end{cases}
\]
admits a unique solution (see \cite{garcke-lam-roc, LiuRo})
\[
  \sigma_\lambda^\phi \in L^2\left(\Omega; C^0([0,T]; H)\cap L^2(0,T; V)\right)\,.
\]
Let us show now that $\sigma_\lambda\in[0,1]$ almost everywhere in $\Omega\times Q$.
Introduce first a smooth approximation $s_\eps:\erre\to\erre$ of the function 
$r\mapsto (r-1)_+$, $r\in\erre$: for example take
\[
  s_\eps(r):=
  \begin{cases}
  0 \quad&\text{if } r\leq 1\,,\\
  \frac1{2\eps}(r-1)^2 \quad&\text{if } r\in[1,1+\eps]\,,\\
  r-1-\frac\eps2\quad&\text{if } r>1+\eps\,,
  \end{cases}
\]
so that $\hat s_\eps:=\int_1^\cdot s_\eps(r)\,\d r \in C^2(\erre)$. 
Associate to $\hat s_\varepsilon$ an integral operator of the form $\mathcal{S}_\varepsilon:= \int_D \hat s_\varepsilon(\sigma(x)) \d x$, where $\sigma \in H$, and exploit the (sub-quadratic) growth of $\hat s_\varepsilon$ to show that for any $\sigma, v \in H$
\begin{equation}
\mathcal{S'}_\varepsilon(\sigma) v = \int_D s_\varepsilon(\sigma) v, \qquad \left[ \mathcal{S''}_\varepsilon(\sigma) v \right] = s'_\varepsilon(\sigma(x))v(x).
\end{equation}

This permit the application of  
It\^o's formula in the form of \cite[Thm.~4.1]{Pard}
\begin{align*}
  \E\int_D\hat s_\eps(\sigma_\lambda^\phi(t)) &+ 
  \E\int_{Q_t} s_\eps'(\sigma_\lambda^\phi)|\nabla\sigma_\lambda^\phi|^2 +
  c\E\int_{Q_t}h(\phi)\sigma_\lambda^\phi s_\eps(\sigma_\lambda^\phi)
  +b\E\int_{Q_t}(\sigma_\lambda^\phi - w) s_\eps(\sigma_\lambda^\phi)\\
  &=\E\int_D\hat s_\eps(\sigma_0)
  +\frac12\E\sum_{n=0}^\infty\int_{Q_t} s_\eps'(\sigma_\lambda^\phi)
  |\mathcalboondox{h}_n(\sigma_\lambda^\phi)|^2\,.
\end{align*}
Since $s_\eps$ is increasing and identically $0$ in $(-\infty,0]$, the second
and third term on the left-hand side are nonnegative. Moreover, 
using the fact that $s_\eps'\leq1$ and that $\mathcalboondox{h}_n$ is Lipschitz-continuous with $\mathcalboondox{h}_n(1)=0$, 
recalling that $\sigma_0\in[0,1]$ a.e.~in $\Omega\times D$ we get
\[
  \E\int_D\hat s_\eps(\sigma_\lambda^\phi(t)) + 
  b\E\int_{Q_t}(\sigma_\lambda^\phi - w)s_\eps(\sigma_\lambda^\phi)
  \lesssim\frac12\E\int_{\{\sigma_\lambda^\phi>1\}}|\sigma_\lambda^\phi-1|^2=
  \frac12\E\int_{Q_t}|(\sigma_\lambda^\phi-1)_+|^2\,.
\]
Letting $\eps\searrow0$ we deduce that 
\[
  \frac12\E\int_D|(\sigma_\lambda^\phi-1)_+(t)|^2
  +b\E\int_{Q_t}(\sigma_\lambda^\phi-w)(\sigma_\lambda^\phi-1)_+
  \leq\frac12\E\int_{Q_t}|(\sigma_\lambda^\phi-1)_+|^2\,,
\]
where, since $w\in[0,1]$ a.e.~in $\Omega\times Q$,
\[
  (\sigma_\lambda^\phi-w)(\sigma_\lambda^\phi-1)_+=
  (\sigma_\lambda^\phi-1)(\sigma_\lambda^\phi-1)_+ + 
  (1-w)(\sigma_\lambda^\phi-1)_+ \geq0 \quad\text{a.e.~in } \Omega\times Q\,.
\]
The Gronwall lemma yields then $(\sigma_\lambda^\phi-1)_+=0$,
hence $\sigma_\lambda^\phi\leq1$, a.e~in $\Omega\times Q$.
Arguing similarly with $-(\sigma_\lambda^\phi)_-$, one also deduces that 
$\sigma_\lambda^\phi\geq0$
almost everywhere, so that 
\[
  \sigma_\lambda^\phi \in [0,1] \qquad\text{a.e.~in } \Omega\times Q\,.
\]
Let us substitute now $\sigma_\lambda^\phi$ in \eqref{eq:1_app} and consider the Cahn-Hilliard system
\[
  \begin{cases}
  \d\varphi_\lambda^\phi - \Delta\mu_\lambda^\phi\,\d t 
  = (\mathcal P\sigma^\phi_\lambda-a- \alpha u)h(\varphi_\lambda^\phi)\,\d t + G\,\d W_1
  \qquad&\text{in } (0,T)\times D\,,\\
  \mu_\lambda^\phi = -A\Delta\varphi^\phi_\lambda + B\psi'_\lambda(\varphi^\phi_\lambda) 
  \qquad&\text{in } (0,T)\times D\,,\\
  \partial_{\bf n}\varphi^\phi_\lambda = \partial_{\bf n}\mu^\phi_\lambda 
  = 0 \qquad&\text{in } (0,T)\times \partial D\,,\\
  \varphi^\phi_\lambda(0) = \varphi_0 \qquad&\text{in } D\,.
  \end{cases}
\]
Arguing as in \cite{scar-SCH, scar-SVCH}, by the Lipschitz-continuity of $\psi_\lambda'$
and the fact that $h, u$ and $\sigma_\lambda^\phi$ are bounded almost everywhere,
there is a unique solution $(\varphi_\lambda^\phi, \mu_\lambda^\phi)$ with 
\begin{gather*}
  \varphi_\lambda^\phi \in L^2\left(\Omega; C^0([0,T]; H)\cap L^\infty(0,T; V)\cap L^2(0,T; Z)\right)\,,\\
  \mu_\lambda^\phi \in L^2(\Omega; L^2(0,T; V))\,.
\end{gather*}
For every $\lambda>0$, it is well-defined then the map
\[
  \Phi_\lambda:L^2\left(\Omega; L^2(0,T; H)\right)\to
  L^2\left(\Omega; C^0([0,T]; H)\cap L^\infty(0,T; V)\cap L^2(0,T; Z)\right)\,, \qquad
  \phi\mapsto \varphi_\lambda^\phi\,.
\]
Let us show that $\Phi_\lambda$ is a contraction: let $\phi_1, \phi_2 \in L^2(\Omega; L^2(0,T; H))$
progressively measurable.
First of all, from the third equation we have that 
\begin{align*}
  \d(\sigma_\lambda^{\phi_1}-\sigma_\lambda^{\phi_2}) 
  &-\Delta (\sigma_\lambda^{\phi_1}-\sigma_\lambda^{\phi_2}) \,\d t
  +c\left(\sigma_\lambda^{\phi_1}h(\phi_1)-\sigma_\lambda^{\phi_2}h(\phi_2)\right)\,\d t
  +b(\sigma_\lambda^{\phi_1}-\sigma_\lambda^{\phi_2})\,\d t \\
  &= \left(\mathcal H(\sigma_\lambda^{\phi_1}) - \mathcal H(\sigma_\lambda^{\phi_2})\right)\,\d W_2\,,
\end{align*}
with $(\sigma_\lambda^{\phi_1}-\sigma_\lambda^{\phi_2})(0)=0$.
Hence, It\^o's formula for the square of the $H$-norm yields
\begin{equation}\label{ito_sigma_lambda}
\begin{split}
  &\frac12\norm{(\sigma_\lambda^{\phi_1}-\sigma_\lambda^{\phi_2})(t)}_H^2
  +\int_{Q_t}|\nabla(\sigma_\lambda^{\phi_1}-\sigma_\lambda^{\phi_2})|^2
  +\int_{Q_t}\left(ch(\phi_1) + b\right)|\sigma_\lambda^{\phi_1}-\sigma_\lambda^{\phi_2}|^2\\
  &=c\int_{Q_t}\left(h(\phi_2)-h(\phi_1)\right)\sigma_\lambda^{\phi_2}
  (\sigma_\lambda^{\phi_1}-\sigma_\lambda^{\phi_2})
  +\frac12\int_0^t\norm{(\mathcal H(\sigma_\lambda^{\phi_1}) 
  - \mathcal H(\sigma_\lambda^{\phi_2}))(s)}^2_{\cL^2(U_2,H)}\,\d s\\
  &\qquad+\int_0^t\left((\sigma_\lambda^{\phi_1}-\sigma_\lambda^{\phi_2})(s),
  (\mathcal H(\sigma_\lambda^{\phi_1}) 
  - \mathcal H(\sigma_\lambda^{\phi_2}))(s)\right)_H\,\d W_2(s)\,.
\end{split}
\end{equation}
Now, employing the Lipschitz-continuity of $h$ and the fact that $\sigma_\lambda^{\phi_2}\in[0,1]$ 
a.e.~in $\Omega\times Q$ we have
\[
  c\int_{Q_t}\left(h(\phi_2)-h(\phi_1)\right)\sigma_\lambda^{\phi_2}
  (\sigma_\lambda^{\phi_1}-\sigma_\lambda^{\phi_2})\leq
  \frac12\int_{Q_t}|\sigma_\lambda^{\phi_1}-\sigma_\lambda^{\phi_2}|^2
  +\frac{c^2L_h^2}2\int_Q|\phi_1-\phi_2|^2\,.
\] 
Moreover, the Lipschitz continuity of $\mathcal H$ yields
\[
  \int_0^t\norm{(\mathcal H(\sigma_\lambda^{\phi_1}) 
  - \mathcal H(\sigma_\lambda^{\phi_2}))(s)}^2_{\cL^2(U_2,H)}\,\d s\leq
  L_{\mathcal H}^2
  \int_{Q_t}|\sigma_\lambda^{\phi_1}-\sigma_\lambda^{\phi_2}|^2
\]
while the Burkholder-Davis-Gundy (cf.\cite[Lemma~4.1]{fuhr-tes}) and Young inequalities
(see \cite[Lemma~4.3]{mar-scar-diss}) imply that,
for some positive constants $M$ and $M'$,
\begin{align*}
  &\E\sup_{r\in[0,t]}\left|\int_0^r\left((\sigma_\lambda^{\phi_1}-\sigma_\lambda^{\phi_2})(s),
  (\mathcal H(\sigma_\lambda^{\phi_1}) 
  - \mathcal H(\sigma_\lambda^{\phi_2}))(s)\right)_H\,\d W_2(s)\right|\\
  &\qquad\leq M\E\left(\int_0^t\norm{(\sigma_\lambda^{\phi_1}-\sigma_\lambda^{\phi_2})(s)}_H^2
  \norm{(\mathcal H(\sigma_\lambda^{\phi_1}) 
  - \mathcal H(\sigma_\lambda^{\phi_2}))(s)}_{\cL^2(U_2,H)}^2\,\d s\right)^{1/2}\\
  &\qquad\leq ML_{\mathcal H}
  \E\left(\norm{\sigma_\lambda^{\phi_1}-\sigma_\lambda^{\phi_2}}_{C^0([0,t]; H)}
  \norm{\sigma_\lambda^{\phi_1}-\sigma_\lambda^{\phi_2}}_{L^2(0,t; H)}\right)\\
  &\qquad\leq\frac14\E\norm{\sigma_\lambda^{\phi_1}-\sigma_\lambda^{\phi_2}}_{C^0([0,t]; H)}^2
  +M'\E\int_{Q_t}|\sigma_\lambda^{\phi_1}-\sigma_\lambda^{\phi_2}|^2\,.
\end{align*}
Hence, taking supremum in time and expectations, rearranging the terms
and using the Gronwall lemma, we deduce that 
\beq\label{ineq1}
  \norm{\sigma_\lambda^{\phi_1}-\sigma_\lambda^{\phi_2}}_{L^2\left(\Omega; C^0([0,T]; H)\cap 
  L^2(0,T; V)\right)} \leq M\norm{\phi_1-\phi_2}_{L^2(\Omega; L^2(0,T; H))}\,,
\eeq
where $M>0$ only depends on $c$, $L_h$, and $L_{\mathcal H}$.
Secondly, from the first two equations we have 
\begin{gather*}
  \partial_t(\varphi_\lambda^{\phi_1}-\varphi_\lambda^{\phi_2}) - \Delta(\mu_\lambda^{\phi_1}-\mu_\lambda^{\phi_2})
  =(\mathcal P\sigma_\lambda^{\phi_1} - a - \alpha u)h(\varphi_\lambda^{\phi_1}) - (\mathcal P\sigma_\lambda^{\phi_2} - a- \alpha u)h(\varphi_\lambda^{\phi_2})\,,\\
  \mu_\lambda^{\phi_1}-\mu_\lambda^{\phi_2} = -A\Delta(\varphi_\lambda^{\phi_1}-\varphi_\lambda^{\phi_2})
  +B\psi_\lambda'(\varphi_\lambda^{\phi_1})-B\psi_\lambda'(\varphi_\lambda^{\phi_2})
\end{gather*}
with initial condition $(\varphi_\lambda^{\phi_1}-\varphi_\lambda^{\phi_2})(0)=0$. Hence testing the first
by $\varphi_\lambda^{\phi_1}-\varphi_\lambda^{\phi_2}$ and integrating by parts yield
\begin{align*}
  \norm{(\varphi_\lambda^{\phi_1}-\varphi_\lambda^{\phi_2})(t)}^2_{H}
  &+A\int_{Q_t}|\Delta(\varphi_\lambda^{\phi_1}-\varphi_\lambda^{\phi_2})|^2\\
  &=\int_{Q_t}\left((\mathcal P\sigma_\lambda^{\phi_1} - a - \alpha u)h(\varphi_\lambda^{\phi_1}) 
  - (\mathcal P\sigma_\lambda^{\phi_2} - a - \alpha u)h(\varphi_\lambda^{\phi_2})\right)
  (\varphi_\lambda^{\phi_1}-\varphi_\lambda^{\phi_2})\\
  &+B\int_{Q_t}\left(\psi_\lambda'(\varphi_\lambda^{\phi_1})-\psi_\lambda'(\varphi_\lambda^{\phi_2})\right)
  \Delta(\varphi_\lambda^{\phi_1}-\varphi_\lambda^{\phi_2})\,.
\end{align*}
Hence, using the Young inequality on the right-hand side yields
\begin{align*}
  &\norm{(\varphi_\lambda^{\phi_1}-\varphi_\lambda^{\phi_2})(t)}^2_{H}
  +A\int_{Q_t}|\Delta(\varphi_\lambda^{\phi_1}-\varphi_\lambda^{\phi_2})|^2\\
  &\quad\leq\int_{Q_t}|\varphi_\lambda^{\phi_1}-\varphi_\lambda^{\phi_2}|^2
  +\int_{Q_t}|\mathcal P\sigma_\lambda^{\phi_1} - a - \alpha u|^2|h(\varphi_\lambda^{\phi_1}) 
  -h(\varphi_\lambda^{\phi_2})|^2
  +\mathcal P^2\int_{Q_t}|\sigma_\lambda^{\phi_1}-\sigma_\lambda^{\phi_2}|^2|h(\varphi_\lambda^{\phi_2})|^2\\
  &\quad+\frac{A}2\int_{Q_t}|\Delta(\varphi_\lambda^{\phi_1}-\varphi_\lambda^{\phi_2})|^2
  +\frac{B^2}{2A}\int_{Q_t}|\psi_\lambda'(\varphi_\lambda^{\phi_1})-\psi_\lambda'(\varphi_\lambda^{\phi_2})|^2\,,
\end{align*}
from the Lipschitz-continuity of $h$ and $\psi'_\lambda$,
and the fact that $\sigma_\lambda^\phi$ and $h$ are bounded in $[0,1]$ we get
\begin{align*}
  &\norm{(\varphi_\lambda^{\phi_1}-\varphi_\lambda^{\phi_2})(t)}^2_{H}
  +\frac{A}2\int_{Q_t}|\Delta(\varphi_\lambda^{\phi_1}-\varphi_\lambda^{\phi_2})|^2\\
  &\qquad\leq\left(1+(\mathcal PL_h)^2+\frac{B^2}{2A}\left|\frac1\lambda\vee C_2\right|^2\right)
  \int_{Q_t}|\varphi_\lambda^{\phi_1}-\varphi_\lambda^{\phi_2}|^2
  +\mathcal P^2\int_{Q_t}|\sigma_\lambda^{\phi_1}-\sigma_\lambda^{\phi_2}|^2\,.
\end{align*}
The Gronwall lemma implies that, possibly updating the value of the implicit constant $M$,
\beq\label{ineq2}
  \norm{\varphi_\lambda^{\phi_1}-\varphi_\lambda^{\phi_2}}_{L^2(\Omega; C^0([0,T]; H)\cap L^2(0,T; Z))}
  \leq M_{\mathcal P, L_h, A,B, C_2, \lambda} 
  \norm{\sigma_\lambda^{\phi_1} - \sigma_{\lambda}^{\phi_2}}_{L^2(\Omega; L^2(0,T; H))}\,.
\eeq
Combining now \eqref{ineq1} and \eqref{ineq2} we obtain that 
\[
  \norm{\varphi_\lambda^{\phi_1}-\varphi_\lambda^{\phi_2}}_{L^2(\Omega; C^0([0,T]; H)\cap L^2(0,T; Z))}
  \leq M_{c, \mathcal P, L_h, L_{\mathcal H}, A,B, C_2, \lambda} \sqrt T
  \norm{\phi_1-\phi_2}_{L^2(\Omega; L^2(0,T; H))}\,.
\]
We deduce that there is $T_0>0$ sufficiently small such that 
$\Phi_\lambda$ is a contraction on $L^2(\Omega; L^2(0,T_0; H))$, 
hence admits a fixed point in $L^2(\Omega; L^2(0,T_0; H))$.
A standard patching technique yields now the existence and uniqueness of a fixed point
$\varphi_\lambda$ in the whole space $L^2(\Omega; L^2(0,T; H))$.
Setting now $\mu_\lambda:=\mu_\lambda^{\varphi_\lambda}$ and 
$\sigma_\lambda:=\sigma_\lambda^{\varphi_\lambda}$,
the definition of $\Phi_\lambda$ itself ensures also that 
\begin{gather*}
  \varphi_\lambda \in L^2\left(\Omega; C^0([0,T]; H)\cap L^\infty(0,T; V)\cap L^2(0,T; Z)\right)\,,\qquad
  \mu_\lambda \in L^2(\Omega; L^2(0,T; V))\,,\\ 
  \sigma_\lambda \in L^2\left(\Omega; C^0([0,T]; H)\cap L^2(0,T; V)\right)\,,\qquad
  \sigma_\lambda \in [0,1] \quad\text{a.e.~in } \Omega\times Q\,,
\end{gather*}
and that $(\varphi_\lambda, \mu_\lambda, \sigma_\lambda)$ is the unique solution to 
the approximated problem \eqref{eq:1_app}--\eqref{eq:init_app}.

\subsection{Uniform estimates}
Here we perform uniform a-priori estimates with respect to the parameter $\lambda$.

\noindent {\bf First estimate.} We write It\^o's formula for $\frac12\norm{\sigma_\lambda}_H^2$,
getting
\begin{align}\nonumber
  &\frac12\norm{\sigma_\lambda(t)}_H^2
  +\int_{Q_t}|\nabla \sigma_\lambda|^2
  +\int_{Q_t}\left(ch(\varphi_\lambda) + b\right)|\sigma_\lambda|^2
  =\frac12\norm{\sigma_0}_H^2
  +b\int_{Q_t}w\sigma_\lambda\\
  &\qquad+\frac12\int_0^t\norm{\mathcal H(\sigma_\lambda(s))}^2_{\cL^2(U_2,H)}\,\d s
  +\int_0^t\left(\sigma_\lambda(s),
  \mathcal H(\sigma_\lambda(s))\right)_H\,\d W_2(s)\,.\label{unif}
\end{align}
Since $w\leq1$ almost everywhere we have, by the Young inequality,
\[
  b\int_{Q_t}w\sigma_\lambda\lesssim_b 1+\int_{Q_t}|\sigma_\lambda|^2\,.
\] 
Moreover, the Lipschitz continuity of $\mathcal H$ and the fact that $\mathcalboondox{h}_n(0)=0$ for all $n\in\enne$ yields
\[
  \int_0^t\norm{\mathcal H(\sigma_\lambda(s))}^2_{\cL^2(U_2,H)}\,\d s\leq
  L_{\mathcal H}^2
  \int_{Q_t}|\sigma_\lambda|^2\,.
\]
Finally, by the Burkholder-Davis-Gundy and Young inequalities (see e.g. \cite[Lem.~4.1]{mar-scar-ref}),
for every $\eps>0$ and a certain $M_\eps>0$ we get
\begin{align*}
  \norm{\sup_{r\in[0,t]}\int_0^r\left(\sigma_\lambda(s),
  \mathcal H(\sigma_\lambda(s))\right)_H\, \d W_2(s)}_{L^{p/2}(\Omega)}
  &\lesssim \norm{\left(\int_0^t\norm{\sigma_\lambda(s)}_H^2
  \norm{\mathcal H(\sigma_\lambda(s))}_{\cL^2(U_2,H)}^2\,\d s\right)^{1/2}}_{L^{p/2}(\Omega)}\\
  &\leq\eps\norm{\sigma_\lambda}^2_{L^{p}(\Omega; C^0([0,t]; H))}
  +M_\eps\norm{\mathcal H(\sigma_\lambda)}^2_{L^{p}(\Omega; L^2(0,t; \cL^2(U_2,H))}\\
  &\leq\eps\norm{\sigma_\lambda}^2_{L^{p}(\Omega; C^0([0,t]; H))}
  +M_\eps L_{\mathcal H}^2\|\sigma_\lambda\|^2_{L^p(\Omega;L^2(0,T;H))}\,.
\end{align*}
Taking supremum in time and $L^{p/2}(\Omega)$-norm in \eqref{unif}, recalling that $\sigma_\lambda\in[0,1]$, 
there exists $M>0$, independent of $\lambda$ and depending only on 
$T$ and $\norm{\sigma_0}_{L^p(\Omega;H)}^2$, such that 
\beq\label{est1}
  \norm{\sigma_\lambda}_{L^p(\Omega; C^0([0,T]; H)\cap L^2(0,T; V))\cap L^\infty(\Omega\times Q)} \leq M\,,
\eeq
hence, by the Lipschitz continuity of $\mathcal H$ and by comparison in the equation itself, 
\beq
  \label{est1_bis}
  \norm{\mathcal H(\sigma_\lambda)}_{L^p(\Omega; C^0([0,T]; \cL^2(U_2,H)))} + 
  \norm{\sigma_\lambda-\mathcal H(\sigma_\lambda)\cdot W_2}_{L^p(\Omega; H^1(0,T; V^*))}\leq M\,.
\eeq

\noindent {\bf Second estimate.} We write It\^o's formula for the square of the $H$-norm of $\varphi_\lambda$:
taking into account the boundary conditions for $\varphi_\lambda$ and $\mu_\lambda$,
integrating by parts we have
\begin{align*}
  \frac12\norm{\varphi_\lambda(t)}_H^2 &+ A\int_{Q_t}|\Delta\varphi_\lambda|^2
  -B\int_{Q_t}\psi'_\lambda(\varphi_\lambda)\Delta\varphi_\lambda
  =\frac12\norm{\varphi_0}^2_H + \int_{Q_t}(\mathcal P\sigma_\lambda - a- \alpha u)h(\varphi_\lambda)\varphi_\lambda\\
  &+\frac12\int_0^t\norm{G(s)}^2_{\cL^2(U_1,H)}\,\d s
  +\int_0^t\left(\varphi_\lambda(s),G(s)\right)_H\,\d W_1(s)\,.
\end{align*}
Recalling the definition of $\psi'_\lambda$ and the fact that $\sigma_\lambda$, $h$ and $u$
are bounded in $[0,1]$, we deduce
\begin{align*}
  \norm{\varphi_\lambda(t)}_H^2 &+ 2A\int_{Q_t}|\Delta\varphi_\lambda|^2
  +2B\int_{Q_t}\gamma''_\lambda(\varphi_\lambda)|\nabla\varphi_\lambda|^2
  \leq \norm{\varphi_0}^2_H + 2\mathcal P\int_{Q_t}|\varphi_\lambda|\\
  &-C_2\int_{Q_t}\varphi_\lambda\Delta\varphi_\lambda
  +\norm{G}^2_{L^2(0,T; \cL^2(U_1,H))}
  +\int_0^t\left(\varphi_\lambda(s),G(s)\right)_H\,\d W_1(s)\,.
\end{align*}
The Young inequality yields then
\begin{align*}
  &\norm{\varphi_\lambda(t)}_H^2 + \int_{Q_t}|\Delta\varphi_\lambda|^2
  +\int_{Q_t}\gamma''_\lambda(\varphi_\lambda)|\nabla\varphi_\lambda|^2\\
  &\quad\lesssim_{A,B,\mathcal P, a,\alpha, C_2}1+ \norm{\varphi_0}^2_H + \int_{Q_t}|\varphi_\lambda|^2
  +\norm{G}^2_{L^2(0,T; \cL^2(U,H))}
  +\int_0^t\left(\varphi_\lambda(s),G(s)\right)_H\,\d W_1(s)\,,
\end{align*}
where the implicit constant is independent of $\lambda$.
It is a standard matter to check that the Burkholder-Davis-Gundy and Young
inequalities on the right-hand side ensure that, for every $\delta>0$,
\[
  \norm{\sup_{r\in[0,t]}\int_0^r\left(\varphi_\lambda(s),G(s)\right)_H\,\d W(s)}_{L^{p/2}(\Omega)}^{p/2}
  \lesssim\delta\E\sup_{r\in[0,t]}\norm{\varphi_\lambda(r)}^p_H
  + \frac1{4\delta}\norm{G}^p_{L^p(\Omega; L^2(0,T; \cL^2(U,H)))}\,.
\]
Hence, taking supremum in time and $L^{p/2}(\Omega)$-norm, choosing 
$\delta$ sufficiently small and rearranging the terms, 
we infer that there exists a positive constant $M$, independent of $\lambda$, such that 
\beq\label{est2}
  \norm{\varphi_\lambda}_{L^p(\Omega; C^0([0,T]; H)\cap L^2(0,T; Z))} \leq M\,.
\eeq

\noindent {\bf Third estimate.} The idea is to write It\^o's formula for the free-energy functional
\[
  \mathcal E_\lambda(\varphi_\lambda):=\frac{A}2\int_D|\nabla \varphi_\lambda|^2 
  + B\int_D\psi_\lambda(\varphi_\lambda)\,.
\]
Note that the regularity of $\varphi_\lambda$ and $\psi_\lambda$ are not enough in order 
to do so, as $\mathcal E_\lambda$ may not be 
twice Fr\'echet-differentiable in $V$, and $\varphi_\lambda$ is not necessarily 
continuous in $V$: consequently, a rigorous approach would require a further approximation
on the problem. However, 
since this is not restrictive in our direction, we shall proceed formally 
in order to avoid heavy notations, and refer to 
\cite{scar-SCH, scar-SVCH} for a rigorous approach instead.
Noting that, by \eqref{eq:2_app}, for every $x,y, y_1,y_2 \in V$
\begin{equation*}
\begin{split}
D\mathcal E_\lambda(x)[y]&=\langle -A\Delta(x) + B\psi'_\lambda(x), y \rangle_V=\langle \mu_\lambda\,, y \rangle \,,\\
D_2\mathcal{E}_\lambda(x)[y_1,y_2]&= \int_D \nabla y_1 \cdot\nabla y_2 + \int_D \psi''(x)y_1y_2 \,,
\end{split}
\end{equation*}
we have that 
\begin{align*}
  \frac{A}2\int_D|\nabla \varphi_\lambda(t)|^2 &+ B\int_D\psi_\lambda(\varphi_\lambda(t)) 
  + \int_{Q_t}|\nabla \mu_\lambda|^2\\
  &=\frac{A}2\int_D|\nabla \varphi_0|^2 + B\int_D\psi_\lambda(\varphi_0)
  +\int_{Q_t}(\mathcal P\sigma_\lambda -a- \alpha u)h(\varphi_\lambda)\mu_\lambda\\
  &+\frac12\int_0^t\operatorname{Tr}\left(G^*(s)D^2\mathcal E_\lambda(\varphi_\lambda(s))G(s)\right)\,\d s
  +\int_0^t\left(\mu_\lambda(s), G(s)\right)_H\,\d W_1(s)\,.
\end{align*}
On the right-hand side, the fact that $\sigma_\lambda$, $h$ and $u$ are bounded in $[0,1]$
together with the Poincar\'e  and Young inequalities imply that, for every $\delta>0$,
\begin{align*}
  \int_{Q_t}(\mathcal P\sigma_\lambda -a - \alpha u)h(\varphi_\lambda)\mu_\lambda&\leq
  (\mathcal P + a + \alpha)\int_{Q_t}|\mu_\lambda-(\mu_\lambda)_D| 
  + (\mathcal P+a + \alpha)\int_0^t|(\mu_\lambda(s))_D|\,\d s\\
  &\lesssim_{\mathcal P, a, \alpha} \frac1\delta+ 
  \delta\int_{Q_t}|\nabla\mu_\lambda|^2 + \int_0^t|(\mu_\lambda(s))_D|\,\d s\,,
\end{align*}
where the implicit constant is independent of $\lambda$.
Now, formally writing 
\[
  \left(\mu_\lambda, G\right)_H =\left(\mu_\lambda-(\mu_\lambda)_D, G\right)_H 
  +|D|(\mu_\lambda)_DG_D \lesssim
  \norm{\nabla\mu_\lambda}_H\norm{G}_{V^*} + (\mu_\lambda)_D\norm{G}_{V^*}\,,
\]
the quadratic variation of the stochastic integral on the right-hand side
can be bounded employing the Young inequality by
\[
\begin{split}
&\left(\int_0^t(\norm{\nabla\mu_\lambda(s)}_H^2+|(\mu_\lambda(s))_D|^2) \norm{G(s)}_{\cL^2(U_1,V^*)}^2\,\d s\right)^{1/2} \\
&\quad \lesssim \delta\int_{Q_t}|\nabla \mu_\lambda|^2 + \frac1\delta\norm{G}^2_{L^\infty(0,T; \cL^2(U_1,V^*))}
  +\norm{G}_{L^\infty((0,T); \cL^2(U_1,V^*))}\norm{(\mu_\lambda)_D}_{L^2(0,t)}\,.
\end{split}
\]
Hence, using the Burkholder-Davis-Gundy inequality we end up with
\begin{align*}
  &\left\|\sup_{r\in[0,t]}\int_0^r\left(\mu_\lambda(s), G(s)\right)_H\,\d W_1(s)\right\|_{L^{p/2}(\Omega)}^{p/2}\\
  &\lesssim \delta^{p/2}
  \E\left(\int_{Q_t}|\nabla \mu_\lambda|^2 \right)^{p/2} 
  + \frac1{\delta^{p/2}}\norm{G}^p_{L^p(\Omega; L^\infty(0,T; \cL^2(U_1,V^*)))}\\
  &\qquad+\norm{G}_{L^\infty(\Omega\times(0,T); \cL^2(U_1,V^*))}^{p/2}
  \E\norm{(\mu_\lambda)_D}^{p/2}_{L^2(0,t)}\\
  &\lesssim
  \delta^{p/2}\E\left(\int_{Q_t}|\nabla \mu_\lambda|^2 \right)^{p/2} + 
  \frac1{\delta^{p/2}}
  +\sqrt{t}\E\sup_{r \in [0,t]}|(\mu_\lambda(r))_D|^{p/2}\,,
\end{align*}
where the implicit constant depends on $\norm{G}_{L^\infty(\Omega\times(0,T); \cL^2(U_1,V^*))}$
but is independent of $\lambda$.
Finally, given a complete orthonormal system $(e_j)_j$ of $U_1$, 
since $\psi''\lesssim1+|\psi'|$
and $V\embed L^6(D)$, by the H\"older inequality we have
\begin{align*}
&\int_0^t\operatorname{Tr}\left(G^*(s)D^2\mathcal E_\lambda(\varphi_\lambda(s))G(s)\right)\,\d s
=\int_0^t\sum_{j=0}^\infty\left(\norm{\nabla G(s)e_j}_H^2 
+ \int_D\psi''_\lambda(\varphi_\lambda(s))|G(s)e_j|^2\right)\,\d s\\
&\qquad\leq\norm{G}^2_{L^2(0,T; \cL^2(U_1,V))}
+\int_0^t\norm{\psi''_\lambda(\varphi_\lambda(s))}_{L^{3/2}(D)}
\norm{G(s)}^2_{\cL^2(U_1,V)}\,\d s\\
&\qquad\lesssim1+\norm{G}^2_{L^2(0,T; \cL^2(U_1,V))}
+\norm{\psi'_\lambda(\varphi_\lambda)}_{L^2(0,t;H)}
\norm{G}^2_{L^4(0,T;\cL^2(U_1,V))}\,.
\end{align*}
Hence, exploiting \eqref{eq:2_app}, we have that, for every $\eps>0$ and 
a certain $C_\eps>0$,
\begin{align*}
\int_0^t\operatorname{Tr}\left(G^*(s)D^2\mathcal E_\lambda(\varphi_\lambda(s))G(s)\right)\,\d s
&\lesssim1+ \norm{\Delta \varphi_\lambda}^2_{L^2(0,T; H)} +
\eps\int_{Q_t}|\nabla\mu_\lambda|^2 + C_\eps\norm{G}^4_{L^4(0,T;\cL^2(U_1,V))}\\
&\quad+\norm{G}^2_{L^\infty(\Omega; L^4(0,T; \cL^2(U_1,V)))}\sup_{r\in[0,t]}\sqrt{t}|(\mu_\lambda(r))_D|.
\end{align*}
Consequently, taking supremum in time and $L^{p/2}(\Omega)$ norm in It\^o's formula, choosing
$\delta,\eps$ sufficiently small and rearranging the terms yield, for every $T_0\in(0,T]$,
\begin{align*}
  &\left\|\sup_{r\in[0,T_0]}\int_D|\nabla \varphi_\lambda(r)|^2 \right\|_{L^{p/2}(\Omega)}
  +\left\|\sup_{r\in[0,T_0]}\int_D\psi_\lambda(\varphi_\lambda(r))\right\|_{L^{p/2}(\Omega)}
  + \left\|\int_{Q_{T_0}}|\nabla \mu_\lambda|^2 \right\|_{L^{p/2}(\Omega)}\\
  &\lesssim1+
  \norm{\varphi_0}^2_{L^p(\Omega; V)} + \norm{\psi(\varphi_0)}_{L^{p/2}(\Omega; L^1 (D))}+
  \norm{\Delta \varphi_\lambda}^2_{L^{p}(\Omega; L^2(0,T; H))}
  +\sqrt{T_0}\|(\mu_\lambda)_D\|_{L^{p/2}(\Omega;L^\infty(0,T_0))}\,,
\end{align*}
where the implicit constant only depends on 
$\norm{G}_{L^\infty(\Omega\times(0,T); \cL^2(U_1,H))}$, $\norm{G}_{L^\infty(\Omega; L^4(0,T; \cL^2(U_1,V)))}$ and
on the data $A$, $B$, $\mathcal P$, $a$, $\alpha$, $C_3$.
Now, thanks to \eqref{est2}, the fact that $|\psi'|\lesssim1+\psi$ on the left-hand side yields,
by comparison in \eqref{eq:2_app},
\begin{align*}
  &\left\|\sup_{r\in[0,T_0]}\int_D|\nabla \varphi_\lambda(r)|^2 \right\|_{L^{p/2}(\Omega)}
  + \left\|\sup_{r\in[0,T_0]}|(\mu_\lambda(r))_D|\right\|_{L^{p/2}(\Omega)} 
  + \left\|\int_{Q_{T_0}}|\nabla \mu_\lambda|^2 \right\|_{L^{p/2}(\Omega)}\\
  &\lesssim1+
  \norm{\varphi_0}^2_{L^p(\Omega; V)} + \norm{\psi(\varphi_0)}_{L^{p/2}(\Omega; L^1(D))}
  +\sqrt{T_0}\|(\mu_\lambda)_D\|_{L^{p/2}(\Omega;L^\infty(0,T_0))}\,,
\end{align*}
so that, choosing first $T_0$ small enough and then 
using a classical patching argument, we infer that
\beq
  \label{est3}
  \norm{\varphi_\lambda}_{L^p(\Omega; L^\infty(0,T; V))} 
  + \norm{\nabla \mu_\lambda}_{L^p(\Omega; L^2(0,T; H))}
  +\norm{(\mu_\lambda)_D}_{L^{p/2}(\Omega; L^\infty(0,T))} \leq M\,.
\eeq
By comparison in \eqref{eq:2_app} we deduce then also that
\beq
  \label{est4}
  \norm{\mu_\lambda}_{L^{p/2}(\Omega; L^2(0,T; V))}
  +\norm{\psi'_\lambda(\varphi_\lambda)}_{L^{p/2}(\Omega; L^2(0,T; H))} \leq M\,.
\eeq

\subsection{Continuous dependence on the data}
\label{ssec:cont_dep}
Here we prove the continuous dependence contained in Theorem~\ref{th:2}.
Given $(\varphi_1,\mu_1, \sigma_1)$ and $(\varphi_2,\mu_2,\sigma_2)$ two solutions 
to \eqref{phi1}--\eqref{eq_sigma_weak} with sources $(u_1,w_1)$, $(u_2,w_2)$,
and initial data $(\varphi^1_0,\sigma^1_0)$, 
$(\varphi^2_0,\sigma^2_0)$, respectively,
we denote by $\varphi := \varphi_1 - \varphi_2$, $\mu:=\mu_1-\mu_2$, $\sigma := \sigma_1- \sigma_2$,
$u:= u_1 - u_2$, and $w:= w_1 - w_2$. 

The equation for the differences in the Cahn-Hilliard system reads 
\begin{align}
\partial_t\varphi - \Delta\mu  &= \left( \mathcal P\sigma - \alpha u \right)h(\varphi_1) +  
(\mathcal P\sigma_2 - a - \alpha u_2)\left( h(\varphi_1) -  h(\varphi_2) \right) \label{eq_phi12}\\
\mu &= - A\Delta \varphi + B(\psi'(\varphi_1) - \psi'(\varphi_2)), \label{eq_mu12}
\end{align}
with initial data given by $\varphi_0:= \varphi_0^1 - \varphi_0^2$
and $\sigma_0:= \sigma_0^1 - \sigma_0^2$.
To get the required estimate we use the same strategy as in \cite{mir-rocca-sch}.
Firstly, we integrate \eqref{eq_phi12} over $D$
\beq\label{phi_D}
\partial_t \varphi_D = \int_D \left( \calP \sigma  + \alpha u \right)h(\varphi_1)
+ \int_D \left( \calP \sigma_2 - a  - \alpha u_2\right)  ( h(\varphi_1) - h(\varphi_1) )
\eeq
then we test by $\varphi_D$ to get
\begin{equation*}
\norm{\varphi_D}_{C^0([0,t])}^2 \lesssim |(\varphi_0)_D|^2+
\| \sigma \|_{L^2(0,t;H)}^2 + \| u \|_{L^2(0,t;V^*)}^2 + \| \varphi \|_{L^2(0,t;H)}^2,
\end{equation*}
thanks to the boundedness of $h, \sigma_2, u_2$ and the Gronwall lemma.
Taking the difference between \eqref{eq_phi12} and \eqref{phi_D} 
and testing by $\cN(\varphi - \varphi_D)$ we have
\[
\langle \partial_t (\varphi - \varphi_D), \cN(\varphi - \varphi_D) \rangle_V 
= \frac{1}{2}\frac{\d }{\d t} \| \varphi - \varphi_D \|_{*}^2\,, \qquad
\langle -\Delta \mu, \cN(\varphi - \varphi_D) \rangle_V = \left( \mu, \varphi - \varphi_D \right)_H
\]
so that, integrating in time,
\begin{equation*}
\begin{split}
&\frac{1}{2} \| (\varphi - \varphi_D)(t) \|_{*}^2 + \int_{Q_t} \mu (\varphi - \varphi_D) \\
&\lesssim 
\norm{\varphi_0-(\varphi_0)_D}_{V^*}^2+
\norm{\sigma}_{L^2(0,t; H)}^2 
+\norm{u}^2_{L^2(0,t; V^*)}
+\norm{\varphi-\varphi_D}^2_{L^2(0,t; V^*)}
+\norm{\varphi}^2_{L^2(0,t; H)}\,,
\end{split}
\end{equation*}
where we used the boundedness of $h(\varphi_1),\sigma_1$, the Lipschitz continuity of $h$ and the characterization \eqref{ineq:Ny}.
We test now equation \eqref{eq_mu12} by $\varphi - \varphi_D$ and we employ (A2) to obtain
\begin{equation}\label{eq:zero_cont_dep}
\begin{split}
A\int_{Q_t}|\nabla \varphi|^2 &= \int_{Q_t} \mu(\varphi - \varphi_D) 
-B \int_{Q_t} \left( \psi'(\varphi_1) - \psi'(\varphi_2) \right)(\varphi - \varphi_D) \\
&\leq\int_{Q_t} \mu(\varphi - \varphi_D) + B \int_{Q_t} \varphi_D\left(\psi'(\varphi_1) 
- \psi'(\varphi_2) \right) + C_2 \norm{\varphi}_{L^2(0,t;H)}^2\\
&\lesssim  \int_{Q_t} \mu(\varphi - \varphi_D) 
+  \int_{Q_t}\varphi_D |\varphi |\left( 1 + |\psi''(\varphi_1)| + |\psi''(\varphi_2)| \right)
+ \norm{\varphi}_{L^2(0,t;H)}^2.
\end{split}
\end{equation}
From the previous estimates we have
\begin{equation}\label{eq:first_cont_dep}
\begin{split}
&\| (\varphi - \varphi_D)(t) \|_{V^*}^2 + | \varphi_D(t)|^2 + \int_{Q_t} | \nabla \varphi(s)|^2 \\
&\lesssim \| \varphi_0 - (\varphi_0)_D \|_{V^*}^2 + | (\varphi_0)_D|^2 
+ \int_0^t \left( \varphi_D(s) \int_D|\varphi(s)|\left(1+|\psi''(\varphi_1(s))| +|\psi''(\varphi_2(s))|\right) \right)\,\d s \\
&\qquad+ \int_0^t \left( \| (\varphi(s) - \varphi_D(s)) \|^2_{V^*}  
+ \|\sigma(s) \|_H^2  + \|u(s)\|_{V^*}^2+ \|\varphi(s)  - \varphi_D(s)\|_H^2  + |\varphi_D(s)|^2 \right) \, \d s\\
&\lesssim \| \varphi_0 - (\varphi_0)_D \|_{V^*}^2 + | (\varphi_0)_D|^2 +  \int_0^t |\varphi_D(s)|^2\int_{D} 
\left(1 +|\psi''(\varphi_1(s))|^2 + |\psi''(\varphi_2(s))|^2  \right)\,\d s\\ 
&\qquad+ \int_0^t \left( \| (\varphi(s) - \varphi_D(s)) \|^2_{V^*}  
+ \|\sigma(s) \|_H^2 + \|u(s)\|_{V^*}^2 + \varepsilon\|\nabla \varphi(s) \|_H^2 + |\varphi_D(s)|^2\right) \d s\,,
\end{split}
\end{equation}
where we used Young inequality and the compactness inequality \eqref{comp_ineq} with $\varepsilon < 1$. 
We introduce now a sequence of stopping times 
$\left(\tau^N\right)_{N \in \mathbb{N}}$ as 
\[ \tau^N := \inf \left\lbrace t \geq 0: 
\|\psi''(\varphi_1)\|_{L^2(0,t;H)}^2 + \|\psi''(\varphi_2)\|_{L^2(0,t;H)}^2  \geq N \right\rbrace \wedge T\,. \]
Notice that $\tau^N = \tau^N(\varphi_1,\varphi_2)$ depends on $\varphi_1$ and $\varphi_2$ but we will simply write $\tau^N$ not to weight too much the notation.  
For every $N \in \enne$, the application of Gronwall's lemma (recall that $\varepsilon$ can be
chosen arbitrarily small) in the time horizon $[0,\tau^N]$ gives, for $\P$-almost every $\omega\in\Omega$,
\[
\begin{split}
&\| (\varphi - \varphi_D)(t) \|_{V^*}^2 + | \varphi_D(t)|^2 + \int_0^t \| \nabla \varphi \|_H^2\, \d s \\
&\lesssim\left(1+ e^{T+N}(T+N)\right) \left(\| \varphi_0 - (\varphi_0)_D \|_{V^*}^2 
+ | (\varphi_0)_D|^2 
+ \| \sigma\|_{L^2(0,\tau^N;H)}^2  
+ \| u\|_{L^2(0,\tau^N;V^*)}^2 \right)
\end{split}
\] 
for every $t\in[0,\tau^N(\omega)]$.
Taking the power $p/2$, the supremum in time and expectation on both sides 
we deduce that 
\begin{equation}\label{eq:cont_dip_3}
\begin{split}
&\E \sup_{t \in [0,\tau^N]}\| \varphi(t) - \varphi_D(t) \|_{V^*}^{p} 
+ \E \sup_{t \in [0,\tau^N]}| \varphi_D(t)|^{p} 
+ \E \left( \int_0^{\tau^N} \| \nabla \varphi(s) \|_H^2\, \d s \right)^{p/2}\\
&\lesssim_N 
\E\| \varphi_0 - (\varphi_0)_D \|_{V^*}^{p} + \E| (\varphi_0)_D|^{p}   
+ \E \left( \int_0^{\tau^N} \| u(s)\|_{V^*}^2\, \d s \right)^{p/2}
+ \E \left( \int_0^{\tau^N} \| \sigma(s) \|_H^2\, \d s \right)^{p/2}.
\end{split}
\end{equation} 
For what concerns $\sigma$, it holds that 
\begin{equation*}
\begin{split}
\d\sigma - \Delta\sigma \d t  + c\sigma h(\varphi_1)\d t + c\sigma_2\left( h(\varphi_1) - h(\varphi_2) \right) \d t + b(\sigma - w) \d t = \left( \mathcal{H}(\sigma_1) - \mathcal{H}(\sigma_2)\right) \d W_2
\end{split}
\end{equation*}
and the same strategy as in \eqref{ito_sigma_lambda} (taking into account also the term $w$) gives
\begin{equation}\label{eq:sigma_diff}
  \begin{split}
  &\E\sup_{t\in[0,\tau^N]}\norm{\sigma(t)}_H^p + \E \left( \int_0^{\tau^N} \| \sigma(s) \|_V^2\, \d s \right)^{p/2}\\
  &\lesssim_{c,L_h, L_{\mathcal H}}
   \|\sigma_0 \|_{L^p(\Omega;H)}  
   +\E \left( \int_0^{\tau^N} \| w(s) \|_H^2\, \d s \right)^{p/2}
   +\E \left( \int_0^{\tau^N} \| \varphi(s) \|_H^2\, \d s \right)^{p/2}\,.
  \end{split}
\end{equation}
Note again that, for every $t \in [0,T]$
\begin{equation*}
\begin{split}
\norm{\varphi}^p_{L^{p}(\Omega; L^2(0,t; H))} &\lesssim 
\norm{\varphi - \varphi_D}_{L^{p}(\Omega; L^2(0,t; H))}^2 + \norm{\varphi_D}_{L^{p}(\Omega; L^2(0,t))}^2 \\
&\lesssim \norm{\varphi - \varphi_D}^2_{L^{p}(\Omega; L^2(0,t; V^*))} + \varepsilon\| \nabla \varphi\|^2_{L^{p}(\Omega; L^2(0,t; H))} + \norm{\varphi_D}_{L^{p}(\Omega; L^2(0,t))}^2\,.
\end{split}
\end{equation*}
Combining \eqref{eq:sigma_diff} and \eqref{eq:cont_dip_3}, choosing $\eps$
sufficiently small,
and applying the stochastic Gronwall's Lemma (see \cite[Lem.~29.1]{metivier}),
we end up with \eqref{cont_dep_1}.

For what concerns uniqueness, if we take the same initial conditions and set $u, w =0$ in \eqref{cont_dep_1}, a consistency result holds: for every $N \in \enne$
\begin{equation}\label{uniquness_tau_N}
\begin{split}
&\| \varphi - \varphi_D \|_{C^0([0,t]; V^*))} 
+ \| \varphi_D\| + \| \nabla \varphi \|_{L^2(0,t;H)}^2
+ \norm{\sigma}_{C^0([0,t]; H)\cap 
  L^2(0,T; V)} = 0
\end{split}
\end{equation} 
for every $t\in[0,\tau^N(\omega)]$ and for $\P$-almost
every $\omega\in\Omega$. Hence,
and the validity of \eqref{uniquness_tau_N} can be extended to 
the stochastic interval $[\![0,\tau]\!]$ where
\[\tau:= \lim_{N \to +\infty} \tau^N = \sup_{n \in \enne} \tau^N.\]
 It remains to show that $\tau = T$ $\P$-almost surely:
 by contradiction, if $\P\{\tau < T\}>0$,
then by definition of $\tau$ we would have
$\P\{\| \psi'(\varphi_1)\|_{L^2(\tau,T; H)} +
\| \psi'(\varphi_2)\|_{L^2(\tau,T; H)}= + \infty\}>0$, 
which clearly contradicts that $\psi'(\varphi_1), \psi'(\varphi_2) \in L^1(\Omega; L^2(0,T; H))$.

\subsection{Stochastic compactness and passage to the limit}
\label{ssec:stoch_comp}
In this section we prove Theorem \ref{th:1}. 
From the uniform estimates \eqref{est1}--\eqref{est4} obtained above, 
there exists a constant $M$, independent of $\lambda$, such that
\begin{equation}\label{uniform_bounds_lambda}
\begin{split}
\norm{\sigma_\lambda}_{L^p(\Omega; C^0([0,T]; H)\cap L^2(0,T; V))\cap L^\infty(\Omega\times Q)} &\leq M\,, \\
\norm{\varphi_\lambda}_{L^p(\Omega; C^0([0,T]; H)\cap L^\infty(0,T; V)\cap L^2(0,T; Z))} &\leq M\, , \\
\norm{\mu_\lambda}_{L^{p/2}(\Omega; L^2(0,T; V))}+
\norm{\nabla \mu_\lambda}_{L^p(\Omega; L^2(0,T; H))}
+\norm{(\mu_\lambda)_D}_{L^{p/2}(\Omega; L^\infty(0,T))} &\leq M\, ,\\
\norm{\psi'_\lambda(\varphi_\lambda)}_{L^{p/2}(\Omega; L^2(0,T; H))} &\le M.
\end{split}
\end{equation} 
Moreover, given $s \in (0,1/2)$, from Hypothesis (A3) and \cite[Lem.~2.1]{flan-gat} 
\[
G \cdot W_1 \in L^p(\Omega; W^{s,2}(0,T;V)) \cap L^\kappa(\Omega; W^{s,\kappa}(0,T);V^*)) 
\quad \forall \, \kappa \geq 2\,,
\]
and by comparison in \eqref{eq_phi_weak} it holds that 
\begin{equation*}
\| \varphi_\lambda \|_{L^p(\Omega; W^{s,\kappa}(0,T;V^*))} \leq M\,.
\end{equation*}
This is crucial when dealing with the method of compactness:
fixing $\kappa >\frac{1}{s}$, by \cite[Cor.~4-5]{simon} the following inclusions are compact
\begin{equation*}
\begin{split}
L^\infty(0,T;V) \cap W^{s,\kappa}(0,T;V^*) &\hookrightarrow C^0([0,T];H)\,, \\
L^2(0,T; Z) \cap W^{s,2}(0,T;V^*) &\hookrightarrow L^2(0,T;V)\,.
\end{split}
\end{equation*}
We are now in position to show that the family of laws 
$(\pi_\lambda)_{\lambda} := (\cL(\varphi_\lambda))_\lambda$ is a tight family of
probability measures on $C([0,T]; H) \cap L^2(0,T; V)$. 
Denoting $\mathcal{X}:= L^\infty(0,T;V) \cap W^{s,\kappa}(0,T;V^*) \cap L^2(0,T; Z)$, 
we know that $\mathcal{X} \hookrightarrow C([0,T]; H) \cap L^2(0,T; V)$ compactly and
\[ \| \varphi_\lambda \|_{L^p(\Omega; \mathcal{X})} \leq M\,.\] 
Hence, denoting by $B_n$ the closed ball of radius $n$ in $\mathcal X$,
we have that $B_n$ is compact in $C([0,T]; H) \cap L^2(0,T; V)$ and
the Markov inequality implies that
\[
  \sup_{\lambda>0}\pi_\lambda(B_n^c)=
  \sup_{\lambda>0}\P\{\norm{\varphi_\lambda}_{\mathcal X}^p>n^p\}
  \leq \frac1{n^p}\E\norm{\varphi_\lambda}_{\mathcal X}^p\leq \frac{M^p}{n^p}\to 0
\]
as $n\to\infty$. We deduce that for every $\varepsilon >0$
there is $n\in\enne$ sufficiently large such that the compact 
set $B_n$ of $C([0,T]; H) \cap L^2(0,T; V)$ satisfies
$\pi_\lambda(B_n) > 1-\varepsilon$ for every $\lambda>0$, and this proves the tightness of the laws 
$(\pi_\lambda)_\lambda$ on $C([0,T]; H) \cap L^2(0,T; V)$.

In order to reconstruct a process from the limit law, we need to exhibit a 
random variable $\varphi: (\Omega,\cF,\P) \to C([0,T]; H) \cap L^2(0,T; V)$ 
such that for every $\delta > 0$
\[\P\{\|\varphi_\lambda - \varphi\|_{C([0,T]; H) \cap L^2(0,T; V)} > \delta\}
\longrightarrow 0 \quad \text{ as } \lambda \to 0\,. \]
Thanks to a method of Gy\"ongy-Krylov \cite[Lem.~1.1]{gyongy-krylov}, 
this is equivalent to the following:
for any subsequences $(\varphi_{k})_k:= (\varphi_{\lambda_k})_k$ and 
$(\varphi_{j})_j := (\varphi_{\lambda_j})_j$ of the original sequence 
$(\varphi_{\lambda})_\lambda$, there exists a joint subsequence 
$(\varphi_{k_i}, \varphi_{j_i})_i$ converging in law to a probability 
measure $\nu$ on $(C([0,T]; H) \cap L^2(0,T; V))^2$ with the property
\begin{equation}\label{crit_G-K}
\nu \left(  \left\lbrace (g^1,g^2) \in (C([0,T]; H) \cap L^2(0,T; V))^2 : 
g^1 = g^2 \right\rbrace \right) = 1. 
\end{equation} 
Let us now prove the validity of \eqref{crit_G-K}.
Using the tightness of the laws $(\pi_\lambda)_\lambda$ and 
Skorokhod theorem (see \cite[Thm.~2.7]{ike-wata}) we can find a new probability space 
$(\tilde \Omega, \tilde \cF, \tilde \P)$ along with a sequence of 
random variables $(\tilde \varphi_{k_i},\tilde \varphi_{j_i}) : 
\tilde \Omega \to (C([0,T]; H) \cap L^2(0,T; V))^2$ such that 
\begin{equation}\label{conv_skorohod}
(\tilde \varphi_{k_i},\tilde \varphi_{j_i}) \longrightarrow 
(\varphi^1, \varphi^2) \qquad \text{ in } (C([0,T]; H) \cap L^2(0,T; V))^2, \quad \tilde \P\text{-a.s. }
\end{equation}
and $\mathscr{L}(\varphi_{k_i}, \varphi_{j_i}) = 
\mathscr{L}(\tilde \varphi_{k_i}, \tilde \varphi_{j_i})$, for every $i \in \erre$.
Moreover, there exists a sequence of measurable maps 
$\Upsilon_i: (\tilde \Omega, \tilde \cF) \to (\Omega, \cF)$ 
such that for every $i \in \erre$ it holds that $\P = \tilde\P\circ(\Upsilon_i)^{-1}$, 
$(\tilde \varphi_{k_i},\tilde \varphi_{j_i}):= (\varphi_{k_i}, \varphi_{j_i}) \circ \Upsilon_i$ 
and \eqref{conv_skorohod} takes place (c.f.~\cite[Thm.~1.10.4, Add.~1.10.5]{vaa-well}).

\noindent If we define $(\tilde \sigma_{k_i},\tilde \sigma_{j_i}):= (\sigma_{k_i}, \sigma_{j_i}) \circ \Upsilon_i$ 
and  $(\tilde \mu_{k_i},\tilde \mu_{j_i}):= (\mu_{k_i}, \mu_{j_i}) \circ \Upsilon_i$, 
the uniform bounds in \eqref{uniform_bounds_lambda} still holds  in the form
\begin{align*}
\norm{(\tilde \sigma_{k_i},\tilde \sigma_{j_i})}_{(L^p(\tilde\Omega; C^0([0,T]; H)\cap L^2(0,T; V))\cap L^\infty(\Omega\times Q))^2} &\leq M\,, \\
\norm{(\tilde \varphi_{k_i},\tilde \varphi_{j_i})}_{(L^p(\tilde\Omega; C^0([0,T]; H)\cap
L^\infty(0,T; V)\cap L^2(0,T; Z)))^2} &\leq M\, , \\
\norm{(\tilde \mu_{k_i},\tilde \mu_{j_i})}_{(L^{p/2}(\tilde\Omega; L^2(0,T; V)))^2}+
\norm{\nabla (\tilde \mu_{k_i},\tilde \mu_{j_i})}_{(L^p(\tilde\Omega; L^2(0,T; H)))^2}
&\leq M\, ,\\
\norm{(\psi'_\lambda(\tilde \varphi_{k_i}),\psi'_\lambda(\tilde \varphi_{j_i}))}_{(L^{p/2}(\tilde\Omega; L^2(0,T; H)))^2} &\le M\,.
\end{align*}
This guarantees that,
up to passing to subsequences, 
recalling the strong convergence \eqref{conv_skorohod} and 
employing the strong-weak closure of maximal monotone operators
as well as the weak lower semicontinuity of the norms,
\begin{align*}
(\tilde \sigma_{k_i},\tilde \sigma_{j_i}) \rightharpoonup (\sigma^1, \sigma^2) 
\qquad &\text{ in } (L^p(\tilde\Omega; L^2(0,T; V)))^2\,, \\
(\tilde \sigma_{k_i},\tilde \sigma_{j_i}) \wstarto (\sigma^1, \sigma^2) 
\qquad &\text{ in } (L^p(\tilde\Omega; L^1(0,T; H))^*\cap L^\infty(\tilde\Omega\times Q))^2\,,\\
(\tilde \varphi_{k_i},\tilde \varphi_{j_i}) \to (\varphi^1, \varphi^2) 
\qquad &\text{ in }  (L^\ell(\tilde\Omega; C^0([0,T]; H)\cap L^2(0,T; V)))^2 
\quad\forall\,\ell\in[1,p)\,, \\
(\tilde \varphi_{k_i},\tilde \varphi_{j_i}) \wto (\varphi^1, \varphi^2) 
\qquad &\text{ in }  (L^p(\tilde\Omega; L^2(0,T; Z)))^2, \\
(\tilde \mu_{k_i},\tilde \mu_{j_i}) \rightharpoonup (\mu^1, \mu^2) \qquad &\text{ in } (L^{p/2}(\tilde\Omega; L^2(0,T; V)))^2, \\
(\psi'_\lambda(\tilde \varphi_{k_i}),\psi'_\lambda(\tilde \varphi_{j_i})) \rightharpoonup (\psi'(\varphi^1), \psi'(\varphi^2)) \qquad &\text{ in } (L^{p/2}(\tilde \Omega; L^2(0,T; H)))^2,
\end{align*} 
where the limit objects belong  to the same spaces of the respective sequences.

\noindent Furthermore, for every $t \in [0,T]$ and $\tilde \P$-a.s. the following holds in $V^*$, 
\begin{align*}
\partial_t (\tilde \varphi_{k_i} - \tilde \varphi_{j_i}) - \Delta(\tilde \mu_{k_i} - \tilde \mu_{j_i})  &= \mathcal P(\tilde \sigma_{k_i} - \tilde \sigma_{j_i}) h(\tilde \varphi_{k_i}) +  (\mathcal P\tilde \sigma_{j_i} - a - \alpha u)\left( h(\tilde \varphi_{k_i}) -  h(\tilde \varphi_{j_i}) \right)\\
\tilde \mu_{k_i} - \tilde \mu_{j_i} &= - \Delta (\tilde \varphi_{k_i} - \tilde \varphi_{j_i}) + \psi'(\tilde \varphi_{k_i}) - \psi'(\tilde \varphi_{j_i}),
\end{align*}
and passing  to the limit as $i \to +\infty$ we get
\begin{align*}
\partial_t (\varphi^1 - \varphi^2) - \Delta(\mu^1 - \mu^2)  &= \mathcal P(\sigma^1 - \sigma^2) h(\varphi^1) +  (\mathcal P \sigma^1 - a - \alpha u)\left( h(\varphi^1) - h(\varphi^2) \right)\\
\mu^1 - \mu^2 &= - \Delta (\varphi^1 - \varphi^2) + \psi'( \varphi^1) - \psi'(\varphi^2).
\end{align*}
From the uniqueness of the solution proved in Section~\ref{ssec:cont_dep} 
(see \eqref{uniquness_tau_N}, with $t \in [0,T]$) we deduce that $\varphi^1(t) = \varphi^2(t)$ 
for every $t \in [0,T]$, $\tilde \P$-a.s.~and this readily implies that 
\[ \nu \left(  \left\lbrace (g^1,g^2) \in (C^0([0,T]; H) \cap L^2(0,T; V))^2 : g^1 = g^2 \right\rbrace \right) = \tilde \P (\varphi^1= \varphi^2) = 1\,,\]
and \eqref{crit_G-K} holds.
From Gy\"ongy-Krylov's criterium, it holds that 
\[ \varphi_\lambda \to \varphi \qquad  \text{ in } C^0([0,T]; H) \cap L^2(0,T; V), \quad  \text{ in } \P\text{-measure on } \Omega.\] 
Moreover, using the uniform bounds \eqref{uniform_bounds_lambda},
the lower semicontinuity of the norms and the strong-weak closure 
of maximal monotone graphs we also have that
\begin{align*}
\sigma_\lambda \wstarto \sigma \qquad &\text{ in } L^p(\Omega; L^2(0,T; V))\cap L^\infty(\Omega\times Q)\,, \\
\varphi_\lambda \to \varphi \qquad &\text{ in }  L^\ell(\Omega; C^0([0,T]; H)\cap L^2(0,T; V))
\quad\forall\,\ell\in[1,p)\,, \\
\varphi_\lambda \wto \varphi \qquad &\text{ in }  L^p(\Omega;  L^2(0,T; Z))\,,\\
\mu_\lambda \rightharpoonup \mu \qquad &\text{ in } L^{p/2}(\Omega; L^2(0,T; V))\,,\\ 
\psi'_\lambda(\varphi_\lambda) \rightharpoonup \psi'(\varphi) 
\qquad &\text{ in } L^{p/2}(\Omega; L^2(0,T; H))\,,
\end{align*}
where 
\begin{gather*}
\varphi \in L^p(\Omega; C^0([0,T];H) \cap L^\infty(0,T; V)\cap L^2(0,T; Z))\,, \\
\sigma \in L^p(\Omega; C^0([0,T]; H)\cap L^2(0,T; V))\cap L^\infty(\Omega\times Q)\,, \\
\mu \in L^{p/2}(\Omega; L^2(0,T;V))\,, \qquad
\nabla \mu \in L^p(\Omega; L^2(0,T;H)), \qquad \;\,(\mu)_D \in L^{p/2}(\Omega; L^\infty(0,T))\,, \\
\psi'(\varphi) \in L^{p/2}(\Omega; L^2(0,T;H)) \,.
\end{gather*}
Let us show a further convergence for $(\sigma_\lambda)_\lambda$.
To do it, we exploit the continuous dependence on data given in 
\eqref{eq:sigma_diff}. For every subsequences $\lambda_i, \lambda_j$, $i,j \in \enne$
and every $\ell\in[1,p)$, it holds 
\[
   0 \leq 
   \norm{\sigma_{\lambda_i} - \sigma_{\lambda_j}}_{L^\ell\left(\Omega; C^0([0,T]; H)
   \cap L^2(0,T; V)\right)} \lesssim 
   \norm{\varphi_{\lambda_i} - \varphi_{\lambda_j}}_{L^\ell(\Omega; L^2(0,T; H))} 
   \longrightarrow 0, \quad \text{ as } i \to +\infty\,.
\]
This implies that $\sigma_{\lambda} \to \sigma$ in $L^\ell\left(\Omega; C^0([0,T]; H)\cap 
L^2(0,T; V)\right)$, which allows to pass to the limit in the stochastic integrals, getting
by the Burkholder-Davis-Gundy inequality that
\[
  \mathcal{H}(\sigma_{\lambda}) \cdot W_2 \to
   \mathcal{H}(\sigma) \cdot W_2\qquad\text{in } L^\ell(\Omega; C^0([0,T]; H))
   \quad\forall\,\ell\in[1,p)\,.
\] 
Putting this information together and letting $\lambda\searrow0$ in the approximated
problem, it is now straightforward to see that 
$(\varphi,\mu,\sigma)$ is a solution to \eqref{phi1}--\eqref{eq_sigma_weak}.

Finally, the uniqueness result proved in Proposition \ref{cont_dep_1}
(see estimate \eqref{uniquness_tau_N}, with $t \in [0,T]$) implies
that $(\varphi, \mu, \sigma)$ is the unique solution to 
\eqref{eq:1}-\eqref{eq:init}. This concludes the proof of Theorem \ref{th:1}.


\section{Refined well-posedness}
\setcounter{equation}{0}
\label{sec:refined}
This section is devoted to the proof of the refined well-posedness result
contained in Theorem~\ref{th:3}.

\subsection{Refined existence}
In order to prove the additional regularity \eqref{phi_H3}--\eqref{phi_reg},
we show that under assumptions \eqref{ip_ref1}--\eqref{ip_ref3}
some additional estimates hold on the approximated solutions
$(\varphi_\lambda, \mu_\lambda, \sigma_\lambda)$ obtained in the previous section.

\noindent{\bf Fourth estimate.} Thanks to \eqref{ip_ref3} we have that
\[
  \norm{\psi'_\lambda(\varphi_\lambda)}_H^2\lesssim 1+ \norm{\varphi_\lambda}_{L^6(D)}^6\,,
\]
hence also, by the embedding $V\embed L^6(D)$,
\[
  \norm{\psi'_\lambda(\varphi_\lambda)}_{L^2(0,T; H)} \lesssim 
  1+\norm{\varphi_\lambda}^3_{L^\infty(0,T; V)}\,.
\]
Similarly, since $\nabla\psi_\lambda'(\varphi_\lambda)=\psi_\lambda''(\varphi_\lambda)\nabla\varphi_\lambda$,
we have again by \eqref{ip_ref3} that
\[
  \norm{\nabla\psi_\lambda'(\varphi_\lambda)}_H^2\lesssim
  \norm{\nabla\varphi_\lambda}_H^2 + \int_D|\varphi_\lambda|^4|\nabla\varphi_\lambda|^2
  \leq \norm{\nabla\varphi_\lambda}_H^2 + 
  \norm{\varphi_\lambda}_{L^6(D)}^4\norm{\nabla\varphi_\lambda}_{L^6(D)}^2\,,
\]
from which, thanks again to the embedding $V\embed L^6(D)$,
\[
  \norm{\nabla\psi_\lambda'(\varphi_\lambda)}_{L^2(0,T; H)} \lesssim \norm{\varphi}_{L^2(0,T; Z)}
  + \norm{\varphi_\lambda}_{L^\infty(0,T;V)}^2\norm{\varphi_\lambda}_{L^2(0,T; Z)}\,.
\]
We infer that 
\[
  \norm{\psi'_\lambda(\varphi_\lambda)}_{L^2(0,T; V)}\lesssim 
  1+ \norm{\varphi_\lambda}^3_{L^\infty(0,T; V)} + 
   \norm{\varphi_\lambda}_{L^\infty(0,T;V)}^2\norm{\varphi_\lambda}_{L^2(0,T; Z)}\,,
\]
hence from \eqref{est2}--\eqref{est3} we deduce that
\[
  \norm{\psi'_\lambda(\varphi_\lambda)}_{L^{p/3}(\Omega; L^2(0,T; V))}\leq M\,.
\]
By comparison in \eqref{eq:2_app} and estimate \eqref{est4} we infer that 
\[
  \norm{\Delta\varphi_\lambda}_{L^{p/3}(\Omega; L^2(0,T; V))}\leq M\,,
\]
which yields \eqref{phi_H3} by elliptic regularity.

\noindent{\bf Fifth estimate.} Arguing as in the {\bf Second estimate}, we easily get that
\begin{align*}
  &\norm{\varphi_\lambda(t)}_H^2 + \int_{Q_t}|\Delta\varphi_\lambda|^2
  \lesssim_{A,B,\mathcal P, a,\alpha, C_2}
  1+\norm{\varphi_0}^2_H 
  +\norm{G}^2_{L^2(0,T; \cL^2(U_1,H))}\\
&
  + \int_{Q_t}|\varphi_\lambda|^2
  +\int_0^t\left(\varphi_\lambda(s),G(s)\right)_H\,\d W_1(s)
\end{align*}
for every $t\in[0,T]$. Now, we add and subtract the term
\[
  \frac r2\int_0^t|(\varphi_\lambda(s), G(s))_H|^2\,\d s
\]
on the right-hand side, with $r>0$ to be fixed later, getting
\begin{align*}
  \norm{\varphi_\lambda(t)}_H^2 &+ \int_{Q_t}|\Delta\varphi_\lambda|^2
  \lesssim
  1+\norm{\varphi_0}^2_H 
  +\norm{G}^2_{L^2(0,T; \cL^2(U_1,H))}
  + \int_{Q_t}|\varphi_\lambda|^2 \\
  &+\left(\int_0^t\left(\varphi_\lambda(s),G(s)\right)_H\,\d W_1(s)
  -\frac r2\int_0^t|(\varphi_\lambda(s), G(s))_H|^2\,\d s\right)
  + \frac r2\int_0^t|(\varphi_\lambda(s), G(s))_H|^2\,\d s\,,
\end{align*}
where
\[
  \int_0^t|(\varphi_\lambda(s), G(s))_H|^2\,\d s\leq
  \norm{G}^2_{L^\infty(\Omega\times(0,T); \cL^2(U_1,H))}\int_{Q_t}|\varphi_\lambda|^2\,.
\]
Hence, we infer that 
\begin{align*}
  \norm{\varphi_\lambda(t)}_H^2 &+ \int_{Q_t}|\Delta\varphi_\lambda|^2
  \lesssim_{\norm{G}^2_{L^\infty(\Omega\times(0,T; \cL^2(U_1,H))}}
  1+\norm{\varphi_0}^2_H 
  + \int_{Q_t}|\varphi_\lambda|^2 \\
  &+\left(\int_0^t\left(\varphi_\lambda(s),G(s)\right)_H\,\d W_1(s)
  -\frac r2\int_0^t|(\varphi_\lambda(s), G(s))_H|^2\,\d s\right)\,,
\end{align*}
and consequently, for every $\beta\geq1$,
\begin{align*}
  &\exp\left(\beta\norm{\varphi_\lambda(t)}_H^2\right) + \exp\left(\beta\int_{Q_t}|\Delta\varphi_\lambda|^2\right)\\
  &\lesssim
  \exp\left(\beta\norm{\varphi_0}^2_H \right)\cdot
  \exp\left(\beta\int_{Q_t}|\varphi_\lambda|^2\right)\cdot
  \exp\left(\beta\int_0^t\left(\varphi_\lambda(s),G(s)\right)_H\,\d W_1(s)
  -\frac{\beta r}2\int_0^t|(\varphi_\lambda(s), G(s))_H|^2\,\d s\right)\\
  &\lesssim\exp\left(3\beta\norm{\varphi_0}^2_H \right)
  +\exp\left(3\beta\int_{Q_t}|\varphi_\lambda|^2\right)\\
  &\qquad+\exp\left(3\beta\int_0^t\left(\varphi_\lambda(s),G(s)\right)_H\,\d W_1(s)
  -\frac{3\beta r}2\int_0^t|(\varphi_\lambda(s), G(s))_H|^2\,\d s\right)
\end{align*}
for every $t\in[0,T]$, $\P$-almost surely.
Now, choosing $r:=3\beta$, it is well-know that the process
\[
  t\mapsto\exp\left(3\beta\int_0^t\left(\varphi_\lambda(s),G(s)\right)_H\,\d W_1(s)
  -\frac{9\beta^2}2\int_0^t|(\varphi_\lambda(s), G(s))_H|^2\,\d s\right)
\]
is a real positive local martingale, hence also a real supermartingale, so that 
\[
  \E\exp\left(3\beta\int_0^t\left(\varphi_\lambda(s),G(s)\right)_H\,\d W_1(s)
  -\frac{9\beta^2}2\int_0^t|(\varphi_\lambda(s), G(s))_H|^2\,\d s\right) \leq 1 
  \qquad\forall\,t\in[0,T]\,.
\]
Consequently, taking expectations and supremum in time yields, for all $T_0\in(0,T]$,
\begin{align*}
  &\sup_{t\in[0,T_0]}\E\exp\left(\beta\norm{\varphi_\lambda(t)}_H^2\right) 
  + \E\exp\left(\beta\norm{\Delta\varphi_\lambda}_{L^2(0,T_0; H)}^2\right)
  \lesssim1+\E\exp\left(3\beta\norm{\varphi_0}^2_H \right)\\
&
  +\E\exp\left(3\beta\int_{Q_{T_0}}|\varphi_\lambda|^2\right)\,.
\end{align*}
Noting that, by the Jensen inequality and Fubini's theorem,
\begin{align*}
  \E\exp\left(\frac\beta{T_0}\norm{\varphi_\lambda}^2_{L^2(0,T_0; H)}\right)
  &\leq\E\frac1{T_0}\int_0^{T_0}\exp\left(\beta\norm{\varphi_\lambda(t)}_H^2\right)\,\d t
  =\frac1{T_0}\int_0^{T_0}\E\exp\left(\beta\norm{\varphi_\lambda(t)}_H^2\right)\,\d t\\
  &\leq\sup_{t\in[0,T_0]}\E\exp\left(\beta\norm{\varphi_\lambda(t)}_H^2\right)\,,
\end{align*}
rearranging the terms yields
\begin{align*}
  \E\exp\left(\frac\beta{T_0}\norm{\varphi_\lambda}^2_{L^2(0,T_0; H)}\right)
  &+ \E\exp\left(\beta\norm{\Delta\varphi_\lambda}_{L^2(0,T_0; H)}^2\right)\\
  &\lesssim1+\E\exp\left(3\beta\norm{\varphi_0}^2_H \right)
  +\E\exp\left(3\beta\norm{\varphi_\lambda}^2_{L^2(0,T_0;H)}\right)\,.
\end{align*}
Hence, choosing $T_0\in(0,T]$ such that $\frac\beta{T_0}>3\beta$, for example $T_0=\frac16\wedge T$,
and using the Young inequality yields, for every $\eps>0$ and for $C_\eps>0$,
\begin{align*}
  \E\exp\left(6\beta\norm{\varphi_\lambda}^2_{L^2(0,T_0; H)}\right)
  &+ \E\exp\left(\beta\norm{\Delta\varphi_\lambda}_{L^2(0,T_0; H)}^2\right)\\
  &\lesssim1+\E\exp\left(3\beta\norm{\varphi_0}^2_H \right)
  +\E\exp\left(3\beta\norm{\varphi_\lambda}^2_{L^2(0,T_0;H)}\right)\\
  &\lesssim1+\E\exp\left(3\beta\norm{\varphi_0}^2_H \right)
  +\eps\E\exp\left(6\beta\norm{\varphi_\lambda}^2_{L^2(0,T_0;H)}\right) + C_\eps\,.
\end{align*}
Choosing $\eps>0$ sufficiently small, rearranging the terms and using 
a standard patching argument implies that there exists $M_\beta>0$, 
independent of $\lambda$, such that
\beq
  \label{est5}
  \sup_{t\in[0,T]}\E\left(\beta\norm{\varphi_\lambda(t)}_H^2\right)+
  \E\exp\left(\beta\norm{\varphi_\lambda}_{L^2(0,T; Z)}^2\right) \leq M_\beta\,.
\eeq
This concludes the proof of \eqref{phi_reg}.

\subsection{Refined continuous dependence}\label{ssec:ref_cont}
We prove the refined continuous dependence property \eqref{cont_dep_2}
by using the additional regularity \eqref{phi_reg}.
We use the same notation of Section~\ref{ssec:cont_dep}.
From \eqref{eq:zero_cont_dep}--\eqref{eq:first_cont_dep} and
the mean-value theorem we get that 
\beq\label{aux_ref_ineq}
\begin{split}
\| \varphi(t)& - \varphi_D(t) \|_{V^*}^2 + |\varphi_D(t)|^2 
+ \int_0^t \| \nabla \varphi(s) \|_H^2\, \d s \\
&\lesssim \| \varphi_0 - (\varphi_0)_D \|_{V^*}^2 
+ | (\varphi_0)_D|^2 +  \int_0^t |\varphi_D(s)|\int_D(\psi'(\varphi_1(s))-\psi'(\varphi_2(s))) \, \d s\\ 
&+ \int_0^t \left( \|\varphi(s) - \varphi_D(s) \|^2_{V^*}  + |\varphi_D(s)|^2\right)\,\d s
+ \|\sigma \|_{L^2(0,t;H)}^2 + \|u\|_{L^2(0,t;V^*)}^2\\
&\lesssim \| \varphi_0 - (\varphi_0)_D \|_{V^*}^2 
+ | (\varphi_0)_D|^2 +  
\int_0^t |\varphi_D(s)|\int_D\varphi(s)\int_0^1\psi''(\iota\varphi_1(s) + (1-\iota)\varphi_2(s))\,\d\iota\, \d s\\ 
&+ \int_0^t \left( \| (\varphi(s) - \varphi_D(s)) \|^2_{V^*}  + |\varphi_D(s)|^2\right)\,\d s
+ \|\sigma \|_{L^2(0,t;H)}^2 + \|u\|_{L^2(0,t;V^*)}^2 \,.
\end{split}
\eeq
Note that, by the growth assumption \eqref{ip_ref3}, for every $\iota\in[0,1]$ we have that 
\[
|\psi''(\iota\varphi_1 + (1-\iota)\varphi_2)|\lesssim
1 + |\varphi_1|^2 + |\varphi_2|^2
\]
and 
\begin{align*}
&|\nabla\psi''(\iota\varphi_1 + (1-\iota)\varphi_2)|=
|\psi'''(\iota\varphi_1 + (1-\iota)\varphi_2)(\iota\nabla\varphi_1 + (1-\iota)\nabla\varphi_2)|\\
&\lesssim(1+ |\varphi_1| + |\varphi_2|)(|\nabla\varphi_1| + |\nabla\varphi_2|)\\
&\lesssim 1+ |\varphi_1|^2 + |\varphi_2|^2 + |\nabla\varphi_1|^2 + |\nabla\varphi_2|^2\,,
\end{align*}
from which we deduce, thanks to the continuous embedding $V\embed L^6(D)$, that
\[
  \norm{\psi''(\iota\varphi_1 + (1-\iota)\varphi_2)}_V^2\lesssim
  1+ \norm{\varphi_1}_{L^4(D)}^4 + \norm{\varphi_2}_{L^4(D)}^4
  +\norm{\nabla\varphi_1}_{L^4(D)}^4 + \norm{\nabla\varphi_2}_{L^4(D)}^4
  \lesssim 1 + \norm{\varphi_1}_Z^4 + \norm{\varphi_2}_Z^4\,.
\]
Hence, we infer that, for every $s\in[0,T]$,
\[
\norm{\int_0^1\psi''(\iota\varphi_1(s) + (1-\iota)\varphi_2(s))\,\d\iota}_V\lesssim
1 + \norm{\varphi_1(s)}_Z^2 + \norm{\varphi_2(s)}_Z^2\,.
\]
Hence, substituting in \eqref{aux_ref_ineq} we have 
\[
\begin{split}
\| \varphi(t)& - \varphi_D(t) \|_{V^*}^2 + |\varphi_D(t)|^2 
+ \int_0^t \| \nabla \varphi(s) \|_H^2\, \d s \\
&\lesssim \| \varphi_0 - (\varphi_0)_D \|_{V^*}^2 
+ | (\varphi_0)_D|^2 +  
\int_0^t \norm{\varphi(s)}_{V^*}^2\left(1+ \norm{\varphi_1(s)}_Z^2 + \norm{\varphi_2(s)}_Z^2\right)\,\d s\\ 
&+ \int_0^t \left( \| (\varphi(s) - \varphi_D(s)) \|^2_{V^*}  + |\varphi_D(s)|^2\right)\,\d s
+ \|\sigma \|_{L^2(0,t;H)}^2 + \|u\|_{L^2(0,t;V^*)}^2 \,.
\end{split}
\]

Now, since \eqref{ip_H0} is in order, we have that 
\[
\partial_t\sigma - \Delta\sigma + c\sigma h(\varphi_1) + c\sigma_2\left( h(\varphi_1) - h(\varphi_2) \right)  
+ b(\sigma - w)  = 0\,, \qquad \sigma(0)=\sigma_0\,,
\]
so that it is not difficult to prove that, for every $\eps>0$ and a certain $C_\eps>0$
\beq\label{sigm_phi_cont}
\begin{split}
  \norm{\sigma}^2_{C^0([0,t]; H)\cap 
  L^2(0,t; V))} &\lesssim_{c,L_h,}
   \|\sigma_0 \|_{H}^2
  + \norm{\varphi}_{L^2(0,t; H)}^2 + \| w\|_{L^2(0,t;H)}^2\\
  &\leq\|\sigma_0 \|_{H}^2
  + \eps\norm{\nabla \varphi}_{L^2(0,t; H)}^2
  +C_\eps\norm{\varphi}^2_{L^2(0,t; V^*)} + \| w\|_{L^2(0,t;H)}^2
  \qquad\P\text{-a.s.}
  \end{split}
\eeq
Substituting in the estimate above and choosing $\eps$ small enough we get that 
\[
\begin{split}
&\| \varphi(t) - \varphi_D(t) \|_{V^*}^2 + | \varphi_D(t)|^2 
+ \int_0^t \| \nabla \varphi(s) \|_H^2\, \d s \\
&\lesssim \| \varphi_0-(\varphi_0)_D\|_{V^*}^2 +|(\varphi_0)_D|^2
+\|\sigma_0 \|_{H}^2
+ \|u\|_{L^2(0,T;V^*)}^2 + \| w\|_{L^2(0,T;H)}^2\\
&\quad+  \int_0^t \norm{\varphi(s)}_{V^*}^2
\left(1+ \norm{\varphi_1(s)}_Z^2 + \norm{\varphi_2(s)}_Z^2\right) \, \d s
+ \int_0^t \| (\varphi(s) - \varphi_D(s)) \|^2_{V^*}\,\d s
\end{split}
\]
for every $t\in[0,T]$, $\P$-almost surely. The Gronwall lemma implies then that 
\beq\label{cont_dep_aux2}
  \begin{split}
  \norm{\varphi}^2_{C^0([0,T]; V^*)\cap L^2(0,T; V)}
  &\lesssim
  \left(\| \varphi_0 \|_{V^*}^2 
  +\|\sigma_0 \|_{H}^2
  + \|u\|_{L^2(0,T;V^*)}^2 + \| w\|_{L^2(0,T;H)}^2\right)\\
  &\times\exp\left(\norm{\varphi_1}_{L^2(0,T; Z)}^2
  +\norm{\varphi_2}_{L^2(0,T; Z)}^2\right)
  \end{split}
\eeq
so that, taking $p/2$-power, expectations,
applying the H\"older inequality on the right-hand side with exponents
$\frac{q}p>1$ and $\frac{q/p}{(q/p) - 1}$, and recalling condition \eqref{phi_reg}
holds with the choice $\beta=\frac{q/p}{(q/p) - 1}$, we infer that 
\[
  \norm{\varphi}^p_{L^p(\Omega;C^0([0,T]; V^*)\cap L^2(0,T; V))}
  \lesssim
  \| \varphi_0 \|_{L^q(\Omega;V^*)}^p
  +\|\sigma_0 \|_{L^q(\Omega; H)}^p
  + \|u\|_{L^q(\Omega; L^2(0,T;V^*))}^p + \| w\|_{L^q(\Omega; L^2(0,T;H))}^p\,,
\]
so that \eqref{cont_dep_2} follows from
\eqref{sigm_phi_cont}.

Now, testing \eqref{eq_phi12} by $\varphi$, \eqref{eq_mu12} by $-\Delta\varphi$
and taking the difference, yields
\begin{align*}
  &\frac12\norm{\varphi(t)}_H^2 + \int_{Q_t}|\Delta\varphi|^2 
  -\int_{Q_t}\left(\psi'(\varphi_1) - \psi'(\varphi_2)\right)\Delta\varphi\\
  &=\frac12\norm{\varphi_0}_H^2 + \int_{Q_t}
  \left( \mathcal P\sigma - \alpha u \right)h(\varphi_1)\varphi +  
 \int_{Q_t}(\mathcal P\sigma_2 - a - \alpha u_2)\left( h(\varphi_1) -  h(\varphi_2) \right)\varphi
 \end{align*}
for every $t\in[0,T]$.
Using the boundedness of
$\sigma_2$, $u_2$, $h$ and the Lipschitz-continuity of $h$ we get
\[
  \norm{\varphi}_{C^0([0,T]; H)\cap L^2(0,T; Z)}^2
  \lesssim\norm{\varphi_0}_H^2 + \norm{\varphi}_{L^2(0,T; H)}^2 +
 \norm{\sigma}^2_{L^2(0,T; H)} + \norm{u}^2_{L^2(0,T; H)}
 + \int_{Q}\left(1+|\varphi_1|^4 + |\varphi_2|^4\right)|\varphi|^2\,.
\]
Noting now that, by the H\"older inequality and the embedding $V\embed L^6(D)$,
\begin{align*}
  \int_{Q}\left(1+|\varphi_1|^4 + |\varphi_2|^4\right)|\varphi|^2&\lesssim
  \int_0^T\norm{\varphi(s)}^2_{L^6(D)}\left(1+\norm{\varphi_1(s)}_{L^6(D)}^4 +
  \norm{\varphi_2(s)}^4_{L^6(D)}\right)\,\d s\\
  &\lesssim
  \norm{\varphi}^2_{L^2(0,T; V)}\left(1+\norm{\varphi_1}^4_{L^\infty(0,T; V)}
  +\norm{\varphi_2}^4_{L^\infty(0,T; V)}\right)\,,
\end{align*}
taking power $p/2$ and using \eqref{cont_dep_aux2} again to the power $p/2$ we infer that 
\begin{align*}
  \norm{\varphi}_{C^0([0,T]; H)\cap L^2(0,T; Z)}^p
  &\lesssim\left(
  \norm{\varphi_0}_H^p +
  \|\sigma_0 \|_{H}^p
  + \|u\|_{L^2(0,T;H)}^p + \| w\|_{L^2(0,T;H)}^p\right)\\
  &\times
  \left(1+\norm{\varphi_1}^{2p}_{L^\infty(0,T; V)}
  +\norm{\varphi_2}^{2p}_{L^\infty(0,T; V)}\right)\\
  &\times\exp\left(\frac{p}2\norm{\varphi_1}_{L^2(0,T;Z)}^2 + 
  \frac{p}2\norm{\varphi_2}_{L^2(0,T;Z)}^2\right)\,.
\end{align*}
Now, note that by \eqref{ip_ref4} it is easy to check that
\[
  1-\frac1{\beta_0}:=\frac{p}{q} + \frac{2p}{r} <1\,,
\]
hence
we can take expectations and use the H\"older inequality on the right-hand side
with exponents $q/p$, $r/(2p)$ and $\beta_0$, respectively,
getting
\begin{align*}
  \norm{\varphi}_{L^p(\Omega;C^0([0,T]; H)\cap L^2(0,T; Z))}^p &\lesssim
  \left(\| \varphi_0 \|_{L^q(\Omega;V^*)}^p
  +\|\sigma_0 \|_{L^q(\Omega; H)}^p
  + \|u\|_{L^q(\Omega; L^2(0,T;H))}^p + \| w\|_{L^q(\Omega; L^2(0,T;H))}^p\right)\\
  &\times
  \left(1+\norm{\varphi_1}^{2p}_{L^r(\Omega;L^\infty(0,T; V))}
  +\norm{\varphi_2}^{2p}_{L^r(\Omega;L^\infty(0,T; V))}\right)\\
  &\times\norm{\exp\left(\frac{\beta_0p}2\norm{\varphi_1}_{L^2(0,T;Z)}^2
  + \frac{\beta_0p}2\norm{\varphi_2}_{L^2(0,T;Z)}^2\right)}_{L^1(\Omega)}^{1/{\beta_0}}\,.
\end{align*}
Noting that
$\varphi_1,\varphi_2\in L^r(\Omega; L^\infty(0,T;V))$ by \eqref{ip_ref4}, \eqref{phi1}, and Theorem~\ref{th:1},
recalling also \eqref{phi_reg} the last two factors are finite, and
we can conclude the proof of Theorem~\ref{th:3}.


\section{Optimal control problem}
\label{sec:opt}

This section is devoted to the analysis of the optimal control problem associated
to the state system \eqref{eq:1}--\eqref{eq:init} and the cost functional $J$.

\subsection{Existence of an optimal control}
We prove here Theorem~\ref{th:4}, showing that a relaxed optimal control always exists.

Let $(u_n,w_n)_n\subset\mathcal U$ be a minimizing sequence for $\tilde J$ in $\mathcal U$, i.e.~such that
\[
  \lim_{n\to\infty}\tilde J(u_n,w_n)=\inf_{(v,z)\in\mathcal U}\tilde J(v,z)\,.
\]
By definition of $\mathcal U$ the sequence $(u_n,w_n)_n$ is uniformly bounded in 
$L^\infty(\Omega\times Q)$.
In particular, if we denote by $L^2_w(Q)$ the space 
$L^2(Q)$ equipped with its weak topology, 
it is immediate to see that
the sequence of laws of $(u_n,w_n)$ is tight on $L^2_w(Q)^2$.
Furthermore,
for every $n\in\enne$, let 
$(\varphi_n, \mu_n, \sigma_n)_n$ be the corresponding solution to \eqref{phi1}--\eqref{eq_sigma_weak}
with respect to the data $(\varphi_0,\sigma_0,u_n,w_n)$:
recalling the proof of Theorem~\ref{th:1} (see section \ref{sec:exist}),
we know that there exists a positive constant $M$, 
depending only on the initial data $(\varphi_0,\sigma_0)$, but not on $n$, such that 
\begin{align*}
  \norm{\varphi_n}_{L^p(\Omega; W^{s,\kappa}(0,T; V^*)\cap 
  C^0([0,T]; H)\cap L^\infty(0,T; V)\cap L^2(0,T; Z))} &\leq M\,,\\
  \norm{\mu_n}_{L^{p/2}(\Omega; L^2(0,T; H))}+
  \norm{\nabla\mu_n}_{L^p(\Omega; L^2(0,T; H))} +
  \norm{(\mu_n)_D}_{L^{p/2}(\Omega; L^\infty(0,T))}&\leq M\,,\\
  \norm{\psi'(\varphi_n)}_{L^{p/2}(\Omega; L^2(0,T; H))}&\leq M\,,\\
  \norm{\sigma_n}_{L^p(\Omega; H^1(0,T; V^*)\cap L^2(0,T; V))\cap L^\infty(\Omega\times Q)} &\leq M\,,
\end{align*}
where $s\in(0,1/2)$ and $\kappa>1/s$ are fixed.
In particular, the sequence of laws of $(\varphi_n)_n$ and $(\sigma_n)_n$ are tight on the spaces 
$L^2(0,T; V)\cap C^0([0,T]; H)$ and $L^2(0,T; H)$, respectively
(see again section \ref{sec:exist}).

Hence, the sequence 
$(W_1, G\cdot W_1,\varphi_0, \sigma_0, u_n, w_n, \varphi_n, \mu_n, \sigma_n, \varphi_Q, \varphi_T)_n$
is tight on the product space
\[
  C^0([0,T]; U_1)\times C^0([0,T]; H)
  \times V \times H \times L^2_w(Q)^2 \times C^0([0,T]; H) \times L^2_w(0,T; V) \times
  L^2(0,T; H) \times L^2(0,T; H)\times V\,.
\]
Recalling that $L^2_w(Q)$ and $L^2_w(0,T; V)$ are a sub-Polish spaces,
by Jakubowski-Skorokhod theorem (see e.~g.~\cite[Thm.~2.7.1]{fei-hof}) 
there is a probability space $(\Omega',\cF',\P')$
and a sequence of measurable mappings $\phi_n:(\Omega',\cF')\to(\Omega,\cF)$
such that $\P:=\P'\circ\phi_n^{-1}$ for every $n\in\enne$ and 
\begin{align*}
  (W_{1,n}', I_n'):=(W_1, G\cdot W_1)\circ \phi_n \to (W_1', I') \qquad&\text{in } 
  C^0([0,T]; U_1)\times C^0([0,T]; H)\,,\\
  (\varphi_{0,n},\sigma_{0,n})':=
  (\varphi_0,\sigma_0)\circ\phi_n \to (\varphi_0',\sigma_0') \qquad&\text{in } V\times H\,,\\
  (u_n', w_n'):=(u_n,w_n)\circ\phi_n \wto (u',w') \qquad&\text{in } L^2(0,T; H)^2\,,\\
  (\varphi_n', \sigma_n'):=(\varphi_n,\sigma_n)\circ\phi_n \to (\varphi',\sigma') 
  \qquad&\text{in } C^0([0,T]; H)\times L^2(0,T; H)\,,\\
  \mu_n':=\mu_n\circ\phi_n \wto \mu' \qquad&\text{in } L^2(0,T; V)\,,\\
  (\varphi_{Q,n}', \varphi_{T,n}'):=(\varphi_Q, \varphi_T)\circ\phi_n \to (\varphi_Q', \varphi_T')
  \qquad&\text{in } L^2(0,T; H) \times V\,,
\end{align*}
$\P'$-almost surely on $\Omega'$. Since $(W_1, G\cdot W_1, 
\varphi_0, \sigma_0, \varphi_Q, \varphi_T)_n$
is constant, it follows immediately that the law of 
$(W_1', I', \varphi_0', \sigma_0', \varphi_Q', \varphi_T')$
coincides with the law of $(W_1, \varphi_0, \sigma_0, \varphi_Q, \varphi_T)$. Moreover, 
by weak lower semicontinuity we also have that $0\leq u',w'\leq 1$ almost everywhere 
in $\Omega'\times Q$. Finally, 
using a classical procedure based on martingale representation theorems
(for a detailed argument the reader can refer to \cite[\S~4]{vall-zimm}), 
it is possible to show that $W_{1,n}'$ is a $(\cF'_{n,t})_t$-cylindrical Wiener process in $U_1$ and
$W_1'$ is a $(\cF'_t)_t$-cylindrical Wiener process in $U_1$, where
\[
  \cF'_{n,t}:=\sigma(W_{1,n}'(s))_{s\in[0,t]}\,, \qquad
  \cF'_t:=\sigma(W_1'(s), I'(s), \varphi(s), \sigma(s))_{s\in[0,t]}\,,
\]
and that $I'=G\cdot W'$. Since the uniform estimates on
$(\varphi_n, \mu_n, \sigma_n, \psi'(\varphi_n))_n$ are also satisfied by
$(\varphi_n', \mu_n', \sigma_n', \psi'(\varphi_n'))_n$, 
passing to the weak limit as $n\to\infty$ in the variational formulation
of the problem on $(\Omega',\cF', \P')$, by the strong-weak closure
of maximal monotone operators it follows that 
$(\varphi', \mu', \sigma')$ is the unique solution to 
\eqref{phi1}--\eqref{eq_sigma_weak} on $\Omega'$
with respect to $(\varphi_0', \sigma_0', u',w')$.
Consequently, by weak lower 
semicontinuity, the properties of $(\phi_n)_n$, and
the definition of minimizing sequence, we have
\begin{align*}
  \tilde J'(u',w')
  &\leq\liminf_{n\to\infty}
  \frac{\beta_1}{2}\E{}'\int_Q|\varphi_n'-\varphi_{Q,n}'|^2
  +\frac{\beta_2}{2}\E{}'\int_D|\varphi'_n(T)-\varphi_{T,n}'|^2
  +\frac{\beta_3}{2}\E{}'\int_D(\varphi_n'(T) + 1)\\
  &\qquad+\frac{\beta_4}{2}\E{}'\int_Q|u_n'|^2
  +\frac{\beta_5}{2}\E{}'\int_Q|w_n'|^2\\
  &=\liminf_{n\to\infty}J(\varphi_n,u_n,w_n)=
  \liminf_{n\to\infty}\tilde J(u_n,w_n)=\inf_{(v,z)\in\mathcal U}\tilde J(v,z)\,,
\end{align*}
so that $(u',w')$ is a relaxed optimal control.

\subsection{The linearized system}
In this section we prove Theorems~\ref{th:5}--\ref{th:6}.
First of all, we show that uniqueness of solution holds for 
the linearized system \eqref{xy}--\eqref{lin3}.
Secondly, we prove Theorem~\ref{th:6} and (hence) existence of solutions
for the linearized system.

\noindent{\bf Uniqueness.} Let us show that the linearized system \eqref{xy}--\eqref{lin3}
admits a unique solution. Let $(x_k^i,y_k^i, z_k^i)$ solve \eqref{xy}--\eqref{lin3}
for $i=1,2$. Then we have 
\begin{align*}
  \partial_t (x_k^1-x_k^2) - \Delta (y_k^1-y_k^2) = 
  h(\varphi)\mathcal P(z_k^1-z_k^2) + h'(\varphi)(x_k^1-x_k^2)(\mathcal P\sigma -a - \alpha u)
   &\qquad\text{in } (0,T)\times D\,,\\
  y_k^1-y_k^2=-A\Delta (x_k^1-x_k^2) + B\psi''(\varphi)(x_k^1-x_k^2) &\qquad\text{in } (0,T)\times D\,,\\
  \partial_t (z_k^1-z_k^2) - \Delta (z_k^1-z_k^2) + c(z_k^1-z_k^2) h(\varphi) 
  + c\sigma h'(\varphi)(x_k^1-x_k^2)
  +b(z_k^1-z_k^2)=0 &\qquad\text{in } (0,T)\times D\,,\\
  \partial_{\bf n}(x_k^1-x_k^2) = \partial_{\bf n}(z_k^1-z_k^2) = 0 &\qquad\text{in } (0,T)\times \partial D\,,\\
  (x_k^1-x_k^2)(0)=(z_k^1-z_k^2)(0)=0 &\qquad\text{in } D\,.
\end{align*}
Testing the first equation by $\frac1{|D|}$, using the boundedness of $h$, $h'$ and $\sigma$, we 
deduce that there exists $M>0$ such that
\[
  \norm{(x_k^1-x_k^2)_D}_{C^0([0,t])}^2 \leq M\left(\norm{z_k^1-z_k^2}_{L^1(Q_t)}^2
  +\norm{x_k^1-x_k^2}_{L^1(Q_t)}^2\right) \qquad \forall\,t\in[0,T]\,,\quad\P\text{-a.s.}
\]
Testing the first equation by $\mathcal N((x_k^1-x_k^2)-(x_k^1-x_k^2)_D)$, the second one by 
$x_k^1-x_k^2$ and taking the difference yields
\begin{align*}
&\frac12\norm{(x_k^1-x_k^2-(x_k^1-x_k^2)_D)(t)}_{V^*}^2 + A\int_{Q_t}|\nabla (x_k^1-x_k^2)|^2 
+B \int_{Q_t}\psi''(\varphi)|(x_k^1-x_k^2)|^2\\
&\qquad=B\int_{Q_t}\psi''(\varphi)(x_k^1-x_k^2)(x_k^1-x_k^2)_D\\
&\qquad+
\int_{Q_t}\left[h(\varphi)\mathcal P(z_k^1-z_k^2)
+h'(\varphi)(x_k^1-x_k^2)(\mathcal P\sigma -a - \alpha u)\right]\mathcal N(x_k^1-x_k^2-(x_k^1-x_k^2)_D)\,.
\end{align*}
Summing the two inequalities and recalling that $\psi''\geq-C_2$ we infer then that
\begin{align*}
  &\norm{(x_k^1-x_k^2)(t)}_{V^*}^2 + \int_{Q_t}|\nabla(x_k^1-x_k^2)|^2\lesssim_{M,C_2,\mathcal P}
  \int_{Q_t}|x_k^1-x_k^2|^2 + \int_{Q_t}|z_k^1-z_k^2|^2\\
  &\qquad+\int_0^t(x_k^1-x_k^2)_D(s)\norm{(x_k^1-x_k^2)(s)}_{V^*}\norm{\psi''(\varphi(s))}_{V}\,\d s
  +\int_0^t\norm{(x_k^1-x_k^2)(s)}_{V^*}^2\,\d s\,,
\end{align*}
where a direct computation based on \eqref{ip_ref3}, the
embedding $V\embed L^6(D)$ and the Young inequality yields 
(as already performed in Section~\ref{ssec:ref_cont})
\[
  \norm{\psi''(\varphi)}_{V}\lesssim 1 + \norm{\varphi}_Z^2\,.
\]
Testing the third equation by $z_k^1-z_k^2$ it follows easily by the Gronwall lemma and
the boundedness of $h$, $h'$ and $\sigma$ that 
\[
  \norm{z_k^1-z_k^2}_{C^0([0,t]; H)\cap L^2(0,t; V)}
   \leq M \norm{x_k^1-x_k^2}_{L^2(0,t; H)}\,.
\]
Hence, substituting in the previous inequality and using a compactness inequality in the form
\[
  \norm{x_k^1-x_k^2}_{L^2(0,t; H)}^2\leq \eps\norm{\nabla(x_k^1-x_k^2)}^2_{L^2(0,t; H)}
  +C_\eps\norm{x_k^1-x_k^2}^2_{L^2(0,t; V^*)}\,,
\]
choosing $\eps$ sufficiently small and rearranging the terms we have 
\[
  \norm{(x_k^1-x_k^2)(t)}_{V^*}^2 + \int_{Q_t}|\nabla(x_k^1-x_k^2)|^2\lesssim
  \int_0^t\left(1+\norm{\varphi(s)}_{Z}^2\right)\norm{(x_k^1-x_k^2)(s)}^2_{V^*}\,\d s\,,
\]
yielding $x_k^1(t)=x_k^2(t)$ for every $t\in[0,T]$ thanks to the Gronwall lemma
and recalling $\varphi\in L^2(0,T; Z)$.
It follows as a consequence the uniqueness $y_k^1=y_k^2$ and $z_k^1=z_k^2$.

\noindent{\bf G\^ateaux-differentiability and existence.} Let $(u,w), (k_u,k_w)\in\tilde{\mathcal U}$
and let us set $\varphi:=\mathcal S(u,w)$. 
Let now $\eps\in(-\eps_0,\eps_0)$ where $\eps_0>0$ is chosen sufficiently small so that 
$(u,w)+\eps(k_u,k_w)\in\tilde{\mathcal U}$ for all $\eps\in(-\eps_0,\eps_0)$ (note that 
this is possibly since $\tilde{\mathcal U}$ is an open subset of $L^2(\Omega\times Q)$).
Defining $\varphi_\eps:=\mathcal S((u,w)+\eps(k_u,k_w))$, we have then that 
\begin{gather*}
  \partial_t \left(\frac{\varphi_\eps-\varphi}{\eps}\right) - \Delta\left(\frac{\mu_\eps-\mu}{\eps}\right)=
  \frac{h(\varphi_\eps)-h(\varphi)}{\eps}(\mathcal P\sigma_\eps - a - u-\eps k_u) 
   +h(\varphi)\left(\mathcal P\frac{\sigma_\eps-\sigma}{\eps} - k_u\right)\,,\\
   \frac{\mu_\eps-\mu}{\eps}=-A\Delta\left(\frac{\varphi_\eps-\varphi}{\eps}\right)
   +B\frac{\psi'(\varphi_\eps)-\psi'(\varphi)}{\eps}\,,\\
   \partial_t\left(\frac{\sigma_\eps-\sigma}{\eps}\right)-\Delta\left(\frac{\sigma_\eps-\sigma}{\eps}\right)
   +c\frac{\sigma_\eps-\sigma}\eps h(\varphi_\eps)+c\frac{h(\varphi_\eps)-h(\varphi)}{\eps}\sigma
   +b\left(\frac{\sigma_\eps-\sigma}{\eps}-k_w\right)=0\,,
\end{gather*}
where $\frac{\varphi_\eps-\varphi}{\eps}(0)=\frac{\sigma_\eps-\sigma}{\eps}(0)=0$.
Now, from the continuous dependence property \eqref{cont_dep_3}, we have 
\begin{align*}
  &\| \varphi_\eps  - \varphi\|_{L^{p}(\Omega; C^0([0,T]; H)\cap L^2(0,T; Z))} 
  + \norm{\sigma_\eps-\sigma}_{L^p\left(\Omega; C^0([0,T]; H)\cap 
  L^2(0,T; V)\right)}\\
  &\qquad\leq M\eps\Bigl(\|k_u\|_{L^q(\Omega; L^2(0,T;H))} 
  +\norm{k_w}_{L^q(\Omega; L^2(0,T; H))}\Bigr)\,,
\end{align*}
so that we deduce the uniform estimate (updating $M$)
\beq\label{est_lin1}
  \norm{\frac{\varphi_\eps-\varphi}{\eps}}_{L^{p}(\Omega; C^0([0,T]; H)\cap L^2(0,T; Z))}
  +\norm{\frac{\sigma_\eps-\sigma}{\eps}}_{L^p\left(\Omega; C^0([0,T]; H)\cap 
  L^2(0,T; V)\right)}\leq M.
\eeq
Moreover, thanks to the growth condition \eqref{ip_ref3}, the H\"older inequality and
the embedding $V\embed L^6(D)$, we also have 
\begin{align*}
  &\int_Q\left|\frac{\psi'(\varphi_\eps)-\psi'(\varphi)}{\eps}\right|^2=
  \int_Q\int_0^1|\psi''(\varphi + \tau(\varphi_\eps-\varphi))|^2\left|\frac{\varphi_\eps-\varphi}\eps\right|^2\,d\tau
  \lesssim\int_Q(1+|\varphi|^4+|\varphi_\eps|^4)\left|\frac{\varphi_\eps-\varphi}\eps\right|^2\\
  &\leq\int_0^T\left(1+\norm{\varphi(s)}_{L^6(D)}^4 + \norm{\varphi_\eps(s)}^4_{L^6(D)}\right)
  \norm{\frac{\varphi_\eps-\varphi}\eps(s)}^2_{L^6(D)}\,\d s\\
  &\lesssim\left(1+\norm{\varphi}_{L^\infty(0,T; V)}^4 + \norm{\varphi_\eps}^4_{L^\infty(0,T; V)}\right)
  \norm{\frac{\varphi_\eps-\varphi}\eps}^2_{L^2(0,T; V)}\,,
\end{align*}
yielding
\[
\norm{\frac{\psi'(\varphi_\eps)-\psi'(\varphi)}{\eps}}_{L^2(0,T; H)}\lesssim
\left(1+\norm{\varphi}_{L^\infty(0,T; V)}^2 + \norm{\varphi_\eps}^2_{L^\infty(0,T; V)}\right)
\norm{\frac{\varphi_\eps-\varphi}\eps}_{L^2(0,T; V)}\,,
\]
where by \eqref{cont_dep_aux2} we have that 
\[
  \norm{\frac{\varphi_\eps-\varphi}\eps}_{L^2(0,T; V)}\lesssim
  \left(\norm{k_u}_{L^2(0,T; V^*)} + \norm{k_w}_{L^2(0,T; H)}\right)
  \exp\left(\norm{\varphi}_{L^2(0,T; Z)}^2 + \norm{\varphi_\eps}_{L^2(0,T; Z)}^2\right)\,.
\]
Now, thanks to \eqref{ip_ref4} we have that 
$\norm{\varphi}_{L^\infty(0,T; V)}^2 + \norm{\varphi_\eps}^2_{L^\infty(0,T; V)}$
is uniformly bounded (w.r.t.~$\eps$) in $L^{r/2}(\Omega)$, where $\frac{r}2>\frac{pq}{q-p}>p$.
Moreover, $\norm{k_u}_{L^2(0,T; V^*)} + \norm{k_w}_{L^2(0,T; H)}\in L^q(\Omega)$
by definition of $\tilde{\mathcal U}$, where $q>p$ by assumption,
and by \eqref{ip_ref1}--\eqref{ip_ref3} we also have
\[
  \norm{\exp\left(\norm{\varphi}^2_{L^2(0,T; Z)} 
  + \norm{\varphi_\eps}_{L^2(0,T; Z)}^2\right)}_{L^\beta(\Omega)} \leq M_\beta
  \qquad\forall\,\beta>1\,.
\]
In particular, noting that 
\[
  \frac1q + \frac2r <\frac{1}{q} + \frac{q-p}{pq} = \frac1p < 1\,,
\]
choosing $\frac1\beta:=\frac1q + \frac2r$ in the estimates above 
and using the H\"older inequality yields
\beq\label{est_lin2}
\norm{\frac{\psi'(\varphi_\eps)-\psi'(\varphi)}{\eps}}_{L^p(\Omega; L^2(0,T; H))} \leq M\,,
\eeq
hence also, by comparison in the equations, 
\beq\label{est_lin3}
\norm{\partial_t\left(\frac{\varphi_\eps-\varphi}{\eps}\right)}_{L^p(\Omega; L^2(0,T; Z^*))}+
\norm{\frac{\mu_\eps-\mu}{\eps}}_{L^p(\Omega; L^2(0,T; H))}
+\norm{\partial_t\left(\frac{\sigma_\eps-\sigma}{\eps}\right)}_{L^p(\Omega; L^2(0,T; V^*))}\leq M\,.
\eeq
By the uniform estimates \eqref{est_lin1}--\eqref{est_lin3} we deduce that 
there are 
\begin{gather*}
  x_k\in L^p(\Omega; H^1(0,T; Z^*)\cap L^2(0,T; Z))\,,\\
  y_k\in L^p(\Omega; L^2(0,T; H))\,, \qquad
  z_k\in L^p(\Omega; H^1(0,T; V^*)\cap L^2(0,T; V))
\end{gather*}
such that, as $\eps\to0$,
\begin{align*}
  \frac{\varphi_\eps-\varphi}{\eps} \wto x_k \qquad&\text{in } L^p(\Omega; H^1(0,T; Z^*)\cap L^2(0,T; Z))\,,\\
  \frac{\mu_\eps-\mu}{\eps} \wto y_k \qquad&\text{in } L^p(\Omega; L^2(0,T; H))\,,\\
  \frac{\sigma_\eps-\sigma}{\eps} \wto z_k \qquad&\text{in } L^p(\Omega; H^1(0,T; V^*)\cap L^2(0,T; V))
\end{align*}
and
\begin{align*}
  \varphi_\eps\to\varphi \qquad&\text{in } L^p(\Omega; C^0([0,T]; H)\cap L^2(0,T; Z))\,,\\
  \sigma_\eps\to\sigma \qquad&\text{in } L^p(\Omega; C^0([0,T]; H)\cap L^2(0,T; V))\,.
\end{align*}
Since we have the compact inclusions
\[
H^1(0,T; Z^*)\cap L^2(0,T; Z)\cembed L^2(0,T, V)\,, \qquad
H^1(0,T; V^*)\cap L^2(0,T; V)\cembed L^2(0,T; H)\,,
\]
by Skorokhod theorem there exists a probability space $(\Omega',\cF',\P')$
and measurable mappings 
$$\phi_\eps:(\Omega',\cF')\to(\Omega,\cF)$$
with $\P=\P'\circ\phi_\varepsilon^{-1}$ such that 
\begin{gather*}
  \varphi\circ\phi_\eps \to \varphi' \quad\text{in } L^2(0,T; V)\,,\\
  \frac{\varphi_\eps-\varphi}{\eps}\circ\phi_\eps \to x_k' \quad\text{in } L^2(0,T; V)\,,\qquad
  \frac{\sigma_\eps-\sigma}{\eps}\circ\phi_\eps \to z_k' \quad\text{in } L^2(0,T; H)\,,
\end{gather*}
$\P'$-almost surely, and
\begin{align*}
  \frac{\varphi_\eps-\varphi}{\eps}\circ\phi_\eps
   \wto x'_k \qquad&\text{in } L^p(\Omega'; H^1(0,T; Z^*)\cap L^2(0,T; Z))\,,\\
  \frac{\mu_\eps-\mu}{\eps}\circ\phi_\eps \wto y'_k \qquad&\text{in } L^p(\Omega'; L^2(0,T; H))\,,\\
  \frac{\sigma_\eps-\sigma}{\eps}\circ\phi_\eps
   \wto z'_k \qquad&\text{in } L^p(\Omega'; H^1(0,T; V^*)\cap L^2(0,T; V))
\end{align*}
for some $\varphi'$, $x_k'$, $y_k'$, and $z_k'$.
Moreover, up to extracting a subsequence, the continuity of $\psi''$ guarantees that
\[
\frac{\psi'(\varphi_\eps)-\psi'(\varphi)}\eps\circ\phi_\eps=
\int_0^1\psi''(\varphi\circ\phi_\eps+\tau(\varphi_\eps-\varphi)\circ\phi_\eps)
\frac{\varphi_\eps-\varphi}{\eps}\circ\phi_\eps\,\d\tau\to \psi''(\varphi')x'_k
\]
a.e. in $\Omega\times Q$. 
Recalling then that the left-hand side is
uniformly bounded in $L^p(\Omega'; L^2(0,T; H))$ by \eqref{est_lin2}, we deduce also the convergence of the whole sequence  
\[
  \frac{\psi'(\varphi_\eps)-\psi'(\varphi)}\eps\circ\phi_\eps\wto \psi''(\varphi')x'_k
  \qquad\text{in } L^p(\Omega'; L^2(0,T; H))\,.
\]
Similarly, by the Lipschitz-continuity of $h$ it is immediate to show that
\[
  \frac{h(\varphi_\eps)-h(\varphi)}\eps\circ\phi_\eps\to h'(\varphi')x'_k
  \qquad\text{in } L^\ell(\Omega'; L^2(0,T; H)) \quad\forall\,\ell\in[1,p)\,.
\]
Passing then to the weak limit in the
variational formulation of the equations on $\Omega'$ we deduce that 
$(x_k',y_k',z_k')$ solves the linearized system \eqref{xy}--\eqref{lin3} on
$(\Omega',\cF',\P')$ with respect to $\varphi'$. Since e have already proved uniqueness
for such system, the well-known results by Gy\"ongy and Krylov \cite[Lem~1.1.]{gyongy-krylov}
ensures that the strong convergences hold in the original probability space $(\Omega,\cF,\P)$, i.e.~that
\[
\frac{\varphi_\eps-\varphi}{\eps} \to x_k \quad\text{in } L^2(0,T; V)\,,\qquad
  \frac{\sigma_\eps-\sigma}{\eps}\to z_k \quad\text{in } L^2(0,T; H)
\]
$\P$-almost surely in $\Omega$. Hence, repeating the 
same argument on $(\Omega,\cF,\P)$, we have that 
$(x_k,y_k,z_k)$ is the unique solution to the linearized system in the sense 
of \eqref{xy}--\eqref{lin3}. 
This completes the proof of existence of Theorem~\ref{th:5}.

Finally, as a consequence of estimates \eqref{est_lin1} and \eqref{est_lin3}, we also have that 
\[
  \norm{x_k}_{L^p(\Omega; H^1(0,T; Z^*)\cap L^2(0,T; Z))}\leq 
  M\norm{(k_u,k_w)}_{L^q(\Omega; L^2(0,T; H))^2}\,,
\]
so that the map $k\mapsto x_k$ is linear and continuous 
from $L^q(\Omega; L^2(0,T; H))^2$ to $L^p(\Omega; H^1(0,T; Z^*)\cap L^2(0,T; Z))$,
and the G\^ateaux-differentiability of Theorem~\ref{th:6} is also proved.

\subsection{The adjoint system}
We prove here existence (and uniqueness) of solutions for the adjoint system.
We firstly introduce a suitable approximation of the system so that classical variational theory for Backward SPDEs can be applied.
Then we derive uniform estimates on the solution by using a duality argument and pass to the limit exploiting the linear character of the equations.
As regards uniqueness we use again a duality relation.

\noindent{\bf The approximated problem.}
For every $n\in\enne$, let
\[
  \psi''_n:\erre\to\erre\,, \qquad
  \psi''_n(r):=\begin{cases}
  n \quad&\text{if } \psi''(r)>n\,,\\
  \psi''(r) \quad&\text{if } |\psi''(r)|\leq n\,,\\
  -n \quad&\text{if } \psi''(r)<-n\,,
  \end{cases}
  \quad r\in\erre\,.
\]
We consider the approximated problem
\begin{equation*}
\begin{split}
  -\d\pi_n - A\Delta \tilde \pi_n\,\d t + B\psi''_n(\varphi)\tilde \pi_n\,\d t &= 
  h'(\varphi)(\mathcal P\sigma - a - \alpha u)\pi_n\,\d t
  - ch'(\varphi)\sigma \rho_n\,\d t 
  +\beta_1(\varphi-\varphi_Q)\,	d t-\xi_n\,\d W_1\,,\\
  \tilde \pi_n &=-\Delta \pi_n\,,\\
  -d\rho_n -\Delta \rho_n\,\d t &+ ch(\varphi)\rho_n\,\d t + b\rho_n\,\d t=\mathcal P h(\varphi)\pi_n\,\d t - \theta_n\,	\d W_2\,,\\
  \pi_n(T)&=\beta_2(\varphi(T)-\varphi_T)+\frac{\beta_3}2\,, \quad
  \rho(T)=0\,.
  \end{split}
\end{equation*}
From the boundedness of $\psi''_n(\varphi), h', \sigma$ and $u$ and the linear character of the system, we can infer existence and uniqueness of a variational solution due to the classical theory for backward SPDEs (see e.g. \cite[\S~3]{du-meng2}). 
By rewriting the system as a unique equation in the corresponding product spaces (or by using a fixed point technique) it can be easily shown that that 
\begin{gather*}
    \pi_n \in L^2\left(\Omega; C^0([0,T];H)\cap L^2(0,T; Z)\right)\,,\\
    \tilde \pi_n \in L^2\left(\Omega; C^0([0,T]; Z^*)\cap L^2(0,T; H)\right)\,,\\
    \rho_n \in L^2\left(\Omega; C^0([0,T];  H)\cap L^2(0,T; V)\right)\,,\\
    \xi_n \in L^2(\Omega; L^2(0,T; \cL^2(U_1,H)))\,, \qquad
    \theta_n\in L^2(\Omega; L^2(0,T; \cL^2(U_2,H))).
  \end{gather*}
Furthermore, by assumption on $\varphi_Q$ and $\varphi_T$, we have $\beta_1(\varphi-\varphi_Q)\in L^6(\Omega; L^2(0,T; H))$ and
$\beta_2(\varphi(T)-\varphi_T)\in L^6(\Omega,\cF_T; V)$
(recall that $p\geq6$).
Hence, 
by computing It\^o formula for $\| \nabla \pi_n\|^2_H$ and subsequently derive $L^6_\Omega$-estimates, it can be shown that the variational solution $(\pi_n,\tilde\pi_n,\xi_n,\rho_n,\theta_n)$ given by \eqref{ad1}--\eqref{ad3} (where $\psi''$ is replaced by $\psi''_n$), is actually more regular:   
\begin{gather*}
    \pi_n \in L^6\left(\Omega; C^0([0,T];V)\cap L^2(0,T; Z\cap H^3(D))\right)\,,\\
    \tilde \pi_n \in L^6\left(\Omega; C^0([0,T]; V^*)\cap L^2(0,T; V)\right)\,,\\
    \rho_n \in L^6\left(\Omega; C^0([0,T];  H)\cap L^2(0,T; V)\right)\,,\\
    \xi_n \in L^6(\Omega; L^2(0,T; \cL^2(U_1,V)))\,, \qquad
    \theta_n\in L^6(\Omega; L^2(0,T; \cL^2(U_2,H))).
  \end{gather*}
We omit here the details not to weight to much the readability of the paper, and we refer to \cite[Lem.~4.2]{fuhr-tes}= for what concerns $L^p_\Omega$ estimates on backward SPDEs and to \cite{Scarpa} for the improved regularity in space.

In order to compute uniform estimates 
on the approximated solutions to the adjoint problem, we need some auxiliary results.
First of all, we show that the corresponding approximated linearized system
is well-posed in a more general setting, where the forcing terms
in the equations are represented by an arbitrary term $\gamma:=(\gamma_1,\gamma_2)$: 
 \begin{align*}
  \partial_t x_n^\gamma - \Delta y_n^\gamma= 
  h(\varphi)\mathcal P z_n^\gamma + h'(\varphi)x_n^\gamma
  (\mathcal P\sigma -a - \alpha u) +\gamma_1
   &\qquad\text{in } (0,T)\times D\,,\\
  y_n^\gamma=-A\Delta x_n^\gamma + B\psi''_n(\varphi)x_n^\gamma &\qquad\text{in } (0,T)\times D\,,\\
  \partial_t z_n^\gamma - \Delta z_n^\gamma + c z_n^\gamma h(\varphi) 
  + c\sigma h'(\varphi)x_n^\gamma
  +bz_n^\gamma=\gamma_2 &\qquad\text{in } (0,T)\times D\,,\\
  \partial_{\bf n}x_n^\gamma = \partial_{\bf n} z_n^\gamma = 0 &\qquad\text{in } (0,T)\times \partial D\,,\\
  x_n^\gamma (0)=z_n^\gamma(0)=0 &\qquad\text{in } D\,.
\end{align*}
Then we introduce the linear map $\tau: \gamma \mapsto (x_n^\gamma, x_n^\gamma(T))$ assigning to the forcing terms the solution and the solution at final time $t = T$ of the first equation.
Notice that a more general map can be studied, also involving $z_n^\gamma$ and allowing for stochastic perturbation in the linearized system (this is not necessary in our situation due to the additive character of the noise in \eqref{eq:1}).
By carefully choosing the functional spaces, the map $\tau$ turns out to be continuous along with its adjoint: 
$\tau^*: (f,\zeta) \to (\pi_n^\gamma,\rho_n^\gamma)$.
Observe that the dual operator $\tau^*$ maps the forcing term $f$ and the final condition $\zeta$ to the corresponding solution $(\pi_n^\gamma,\rho_n^\gamma)$ of the backward equation.   
Hence,  the solution $(\pi_n,\rho_n)$ we are interested in, can be obtained by evaluating $\tau^*$ at $f = \beta_1(\varphi - \varphi_Q)$ and $\zeta = \beta_2(\varphi(T)-\varphi_T)+\frac{\beta_3}{2}$.

Let us start by showing the continuity of the map $\tau$ and the subsequent duality formula.
\begin{lem}
  \label{lin_gamma}
  Assume {\em(A1)--(A7)}, \eqref{ip_ref1}--\eqref{ip_ref3}, \eqref{ip_H0} and \eqref{ip_ref4}.
  Let $(u,w)\in\tilde{\mathcal U}$ and set $\varphi:=\mathcal S(u,w)$. Then
  for every $\gamma:=(\gamma_1,\gamma_2)\in L^{6/5}(\Omega; L^1(0,T; H))^2$ and for every $n\in\enne$
  there exists a unique triple $(x_n^{\gamma},y_n^{\gamma},z_n^{\gamma})$ with
  \begin{gather*}
    x_n^{\gamma} \in L^{1}(\Omega; H^1(0,T; Z^*)\cap L^2(0,T; Z))\cap 
    L^{6/5}(\Omega; C^0([0,T]; V^*)\cap L^2(0,T; V))\,, \\
    y_n^{\gamma}\in L^{1}(\Omega; L^2(0,T; H))\,,\\
    z_n^{\gamma}\in L^{6/5}(\Omega; H^1(0,T; V^*)\cap L^2(0,T; V))\,,
  \end{gather*}
  such that 
 \begin{gather*}
  \ip{\partial_t x_{k}}{\zeta}_V -\int_Dy_{k}\Delta\zeta = 
  \int_D\left[h(\varphi)\mathcal Pz_k + h'(\varphi)x_k(\mathcal P\sigma - a -\alpha u) + \gamma_1\right]\zeta\,,\\
  \int_Dy_k\zeta=A\int_D\nabla x_k\cdot \nabla\zeta +B \int_D\psi''(\varphi)x_k\zeta\,,\\
  \ip{\partial_t z_k}{\zeta}_V + \int_D\nabla z_k\cdot\nabla\zeta
  +\int_D\left[cz_k h(\varphi) + c\sigma h'(\varphi)x_k + bz_k + \gamma_2\right]\zeta=0
  \end{gather*}
   for every $\zeta\in Z$, for almost every $t\in(0,T)$, $\P$-almost surely.  
Moreover, there exists a positive constant $M>0$, independent of $\gamma$ and $n$, such that 
\begin{align}
  \label{aux_x}
  \norm{x_n^\gamma}_{L^{6/5}(\Omega; C^0([0,T]; V^*)\cap L^2(0,T; V))}&\leq M
  \left(\norm{\gamma_1}_{L^{6/5}(\Omega; L^1(0,T; V^*))}+
  \norm{\gamma_2}_{L^{6/5}(\Omega; L^1(0,T; H))}\right)\,,\\
  \label{aux_x'}
  \norm{x_n^\gamma}_{L^1(\Omega; C^0([0,T]; H)\cap L^2(0,T; Z))}&\leq M
  \left(\norm{\gamma_1}_{L^{6/5}(\Omega; L^1(0,T; H))}+
  \norm{\gamma_2}_{L^{6/5}(\Omega;L^1(0,T; H))}\right)\,.
\end{align}
Finally, it holds that 
  \beq\label{eq:duality}
  \E\int_Q\pi_n\gamma_1 + \E\int_Q\rho_n\gamma_2 = \beta_1\E\int_Q(\varphi-\varphi_Q)x^\gamma_n +
   \E\int_D\left(\beta_2(\varphi(T)-\varphi_T)+\frac{\beta_3}{2}\right)x^\gamma_n(T)\,.
  \eeq
\end{lem}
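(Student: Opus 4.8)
The plan is to establish the three assertions in turn, exploiting that the truncation $\psi''_n$ renders the forcing-perturbed forward system a genuinely linear parabolic system with bounded zeroth-order coefficients and no stochastic integral of its own. For existence and uniqueness I would first solve the system for regular data, say $\gamma\in L^2(\Omega; L^2(0,T;H))^2$, arguing as in the proof of Theorem~\ref{th:5}: uniqueness follows from linearity by testing the homogeneous system and running the same Gronwall argument, now simpler since $\psi''_n$ is bounded, while existence is obtained through a Galerkin approximation together with the a priori bounds below. One then extends to $\gamma\in L^{6/5}(\Omega; L^1(0,T;H))^2$ by approximating the forcing with regular terms and passing to the limit via the uniform estimates \eqref{aux_x}--\eqref{aux_x'}; the linear character of the equations guarantees that the limit solves the system and is unique in the stated class.

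The a priori estimates are the crux. For \eqref{aux_x} I would, following Section~\ref{ssec:ref_cont}, split $x_n^\gamma=(x_n^\gamma-(x_n^\gamma)_D)+(x_n^\gamma)_D$: integrating the first equation over $D$ controls the mean $(x_n^\gamma)_D$ by boundedness of $h,h',\sigma,u$, while testing the mean-free part against $\cN(x_n^\gamma-(x_n^\gamma)_D)$ and the second equation against $x_n^\gamma-(x_n^\gamma)_D$ and subtracting yields $\tfrac12\tfrac{\d}{\d t}\|x_n^\gamma-(x_n^\gamma)_D\|_*^2+A\|\nabla x_n^\gamma\|_H^2$ together with the term $B(\psi''_n(\varphi)x_n^\gamma,x_n^\gamma-(x_n^\gamma)_D)_H$. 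The decisive point is that $\psi''_n(\varphi)$ is controlled in $V$ uniformly in $n$: since $|\psi''_n(\varphi)|\le|\psi''(\varphi)|$ and $|\nabla\psi''_n(\varphi)|\le|\psi'''(\varphi)||\nabla\varphi|$ a.e., the growth hypothesis \eqref{ip_ref3} and $V\embed L^6(D)$ give $\|\psi''_n(\varphi)\|_V\lesssim 1+\|\varphi\|_Z^2$, while $\psi''_n\ge-C_2$ furnishes the needed lower bound. Estimating $z_n^\gamma$ by testing its equation against $z_n^\gamma$, combining through the compactness inequality \eqref{comp_ineq}, and applying the Gronwall lemma pathwise produces a constant of the form $\exp(\int_0^T(1+\|\varphi(s)\|_Z^2)\,\d s)$; taking the $L^{6/5}(\Omega)$-norm and using Hölder against the exponential moments \eqref{phi_reg} yields $M$ independent of $\gamma$ and $n$. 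The stronger bound \eqref{aux_x'} is obtained analogously by testing the first equation against $x_n^\gamma$ and the second against $-\Delta x_n^\gamma$, upgrading the regularity to $C^0([0,T];H)\cap L^2(0,T;Z)$ and handling the cubic growth of the $\psi''_n$-term through $V\embed L^6(D)$ as in Section~\ref{ssec:ref_cont}, then taking the low $L^1(\Omega)$-moment so that the exponential factor is absorbed.

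For the duality identity \eqref{eq:duality} I would apply the It\^o formula to the pairings $(x_n^\gamma,\pi_n)_H$ and $(z_n^\gamma,\rho_n)_H$ on $[0,T]$ — legitimate because $x_n^\gamma,z_n^\gamma$ carry no stochastic integral while $\pi_n,\rho_n$ do, so there is no cross-variation — and take expectations, annihilating the martingale contributions of $\xi_n\cdot W_1$ and $\theta_n\cdot W_2$. Using $x_n^\gamma(0)=z_n^\gamma(0)=0$ and $\rho_n(T)=0$, and integrating by parts in the fourth-order Cahn--Hilliard terms with $\tilde\pi_n=-\Delta\pi_n$, $y_n^\gamma=-A\Delta x_n^\gamma+B\psi''_n(\varphi)x_n^\gamma$ and the homogeneous Neumann conditions, one verifies that the Cahn--Hilliard contributions cancel identically, as do the first-order terms carrying $h'(\varphi)(\mathcal P\sigma-a-\alpha u)$ and the nutrient-coupling terms $\mathcal P h(\varphi)(z_n^\gamma,\pi_n)_H$ and $ch'(\varphi)\sigma(x_n^\gamma,\rho_n)_H$. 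Summing the two relations leaves exactly $\E(x_n^\gamma(T),\pi_n(T))_H+\beta_1\E\int_Q(\varphi-\varphi_Q)x_n^\gamma$ against $\E\int_Q\gamma_1\pi_n+\E\int_Q\gamma_2\rho_n$, and substituting $\pi_n(T)=\beta_2(\varphi(T)-\varphi_T)+\tfrac{\beta_3}{2}$ gives precisely \eqref{eq:duality}.

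The main obstacle is making the constants in \eqref{aux_x}--\eqref{aux_x'} uniform in $n$: the trivial bound $|\psi''_n|\le n$ is useless, and one must instead use the $n$-independent control $\|\psi''_n(\varphi)\|_V\lesssim 1+\|\varphi\|_Z^2$ inherited from \eqref{ip_ref3}, feed it through Gronwall's lemma, and absorb the resulting exponential via the refined integrability \eqref{phi_reg}; this is exactly why the low moments $L^{6/5}$ and $L^1$ in $\Omega$, conjugate to the $L^6(\Omega)$-regularity of $(\pi_n,\rho_n)$, appear in the statement. A secondary technical point is the rigorous justification of the It\^o formula for the duality products, which on the regularized level is guaranteed by the $L^2(0,T;Z\cap H^3(D))$-regularity of $\pi_n$ and the $L^2(0,T;Z)$-regularity of $x_n^\gamma$.
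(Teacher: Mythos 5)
Your skeleton coincides with the paper's proof: existence and uniqueness from the boundedness of $\psi''_n(\varphi)$ (with a Gronwall argument for uniqueness); estimate \eqref{aux_x} via the mean/mean-free splitting, testing by $(x_n^\gamma)_D$, $\cN(x_n^\gamma-(x_n^\gamma)_D)$, $x_n^\gamma-(x_n^\gamma)_D$ and $z_n^\gamma$, plus a Brezis-type Gronwall lemma; estimate \eqref{aux_x'} by testing with $x_n^\gamma$ and $-\Delta x_n^\gamma$ and closing with the H\"older pairing $1=\tfrac56+\tfrac16$ in $\Omega$; and the duality identity \eqref{eq:duality} by pairing the linearized system with $(\pi_n,\rho_n)$ and subtracting the adjoint equations paired with $(x_n^\gamma,z_n^\gamma)$, exactly as the paper does.

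There is, however, a genuine gap in your derivation of \eqref{aux_x}. You insert the random quantity $\|\psi''_n(\varphi)\|_V\lesssim 1+\|\varphi\|_Z^2$ into the Gronwall coefficient, obtaining a pathwise constant $E:=\exp\bigl(C\int_0^T(1+\|\varphi(s)\|_Z^2)\,\d s\bigr)$, and then claim to conclude by ``H\"older against the exponential moments \eqref{phi_reg}''. This step cannot deliver \eqref{aux_x}, because both sides of \eqref{aux_x} carry the \emph{same} stochastic exponent $6/5$: from a pathwise bound $\|x_n^\gamma\|_{\mathcal X}\leq E\,\|\gamma\|_{\mathcal Y}$, H\"older only yields $\|x_n^\gamma\|_{L^{6/5}(\Omega;\mathcal X)}\leq \|E\|_{L^a(\Omega)}\|\gamma\|_{L^b(\Omega;\mathcal Y)}$ with $\tfrac56=\tfrac1a+\tfrac1b$, so $b>\tfrac65$ strictly whenever $a<\infty$; getting $b=\tfrac65$ would force $E\in L^\infty(\Omega)$ (take $\gamma$ supported on $\{E>K\}$), and \eqref{phi_reg} gives all moments of $E$ but never a uniform bound, since $\varphi$ is driven by additive noise. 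The mismatch is not cosmetic: \eqref{aux_x} is used in the adjoint section precisely through the duality $(L^{6/5})^*=L^6$ to produce the bound \eqref{ad_est1} on $\pi_n$ in $L^6(\Omega\times(0,T);V)$, which is then consumed to the full sixth power against $\|\varphi\|^6_{L^6(\Omega\times(0,T);V)}$ in the second adjoint estimate; a bound $L^b\to L^{6/5}$ with $b>6/5$ would only give $\pi_n\in L^{b'}$ with $b'<6$ and the chain breaks. The paper's proof is engineered to avoid exactly this: in its key display for \eqref{aux_x} no norm of $\varphi$ (or of $\psi''(\varphi)$) enters the Gronwall coefficient at all — the $\psi''_n$-contribution $B\int_{Q_t}\psi''_n(\varphi)x_n^\gamma(x_n^\gamma-(x_n^\gamma)_D)$ is absorbed with constants depending only on $A$, $B$, $C_2$ and the $L^\infty$-bounds of $h$, $h'$, $\sigma$, $u$ (this cross term is the delicate point, treated tersely there), so the pathwise constant is deterministic and integration in $\Omega$ preserves the $L^{6/5}\to L^{6/5}$ mapping. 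The exponential-moment mechanism you invoke is reserved in the paper, in the weaker form $\|\varphi\|^2_{L^\infty(0,T;V)}\in L^6(\Omega)$, for \eqref{aux_x'}, where the drop from $L^{6/5}(\Omega)$ to $L^1(\Omega)$ leaves exactly the $L^6(\Omega)$ window — there your plan is correct. To repair your argument for \eqref{aux_x} you must estimate the $\psi''_n$-term without letting any $\varphi$-dependent norm multiply the quantities fed into Gronwall, i.e.\ reproduce the paper's deterministic-constant structure, rather than appeal to \eqref{phi_reg}, which has no room left at the exponent $6/5$.
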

\begin{proof}
  The existence and uniqueness of $(x_n^\gamma, y_n^\gamma, z_n^\gamma)$ follows 
  from the fact that $\psi''_n(\varphi)\in L^\infty(\Omega\times Q)$ and $\gamma\in L^{6/5}(\Omega; L^1(0,T; H))^2$.
  Let us show the two estimates. 
  We integrate the first equation on $D$ and test it by $(x_n^k)_D$,
then we also test the first equation by 
  $\mathcal N(x_n^\gamma-(x_n^\gamma)_D)$, 
  the second by $x_n^\gamma-(x_n^\gamma)_D$,
  the third by $z_n^\gamma$. 
 Summing up all the contributions we obtain
  \begin{align*}
  &\frac12|(x_n^\gamma)_D(t)|^2 + \frac12\norm{(x_n^\gamma-(x_n^\gamma)_D)(t)}^2_{V^*}
  +\frac12\norm{z_n(t)}_H^2 + A\int_{Q_t}|\nabla x_n^\gamma|^2
  +\int_{Q_t}|\nabla z_n^\gamma|^2\\
  &\leq-B\int_{Q_t}\psi''_n(\varphi)x_n^\gamma(x_n^\gamma-(x_n^\gamma)_D)
  +\int_{Q_t}\big(h(\varphi)\mathcal Pz_n^\gamma + h'(\varphi)x_n^\gamma
  (\mathcal P\sigma -a - \alpha u)+\gamma_1\big)_D(x_n^\gamma)_D\\
  &+\int_{Q_t}\left[h(\varphi)\mathcal Pz_n^\gamma + h'(\varphi)x_n^\gamma
  (\mathcal P\sigma -a - \alpha u)+\gamma_1\right]\mathcal N(x_n^\gamma-(x_n^\gamma)_D)
  +\int_{Q_t}\left[\gamma_2-c\sigma h'(\varphi)x_n^\gamma\right]z_n^\gamma\,,
  \end{align*}
  which yields then by the assumptions on $\psi$, the Young and H\"older inequalities, and
  the boundedness of $h$, $h'$, $\sigma$ and $u$,
  \begin{align*}
  &|(x_n^\gamma)_D(t)|^2 + \norm{(x_n^\gamma-(x_n^\gamma)_D)(t)}^2_{V^*}
  +\norm{z_n(t)}_H^2 +\int_{Q_t}|\nabla x_n^\gamma|^2 +\int_{Q_t}|\nabla z_n^\gamma|^2\\
  &\lesssim_{A,B, C_2}
  \int_0^t\left(|(x_n^\gamma)_D(s)|^2+\norm{x_n^\gamma(s)}^2_H+\norm{z_n^\gamma(s)}^2_H\right)\,	\d s\\
  &\qquad+\int_0^t\norm{\gamma_1(s)}_{V^*}\left(\norm{(x_n^\gamma-(x_n^\gamma)_D)(s)}_{V^*}+
  |(x_n^\gamma)_D(s)|^2\right)\,\d s
  +\int_0^t\norm{\gamma_2(s)}_H\norm{z_n^\gamma(s)}_H\,\d s\\
  &\leq\delta\int_{Q_t}|\nabla x_n^\gamma|^2 
  + C_\delta\int_0^t\left(\norm{(x_n^\gamma-(x_n^\gamma)_D)(s)}_{V^*}^2 
  +|(x_n^\gamma)_D(s)|^2
  +\norm{z_n^\gamma(s)}_H^2\right)\,\d s\\
  &\qquad+
  \int_0^t\left(\norm{\gamma_1(s)}_{V^*}\norm{x_n^\gamma(s)}_{V^*}+
  \norm{\gamma_2(s)}_H\norm{z_n^\gamma(s)}_H\right)\,\d s
  \end{align*}
  for every $\delta >0$. Choosing the $\delta>0$ sufficiently small, rearranging the terms
  and applying the Gronwall lemma 
  in the version \cite[Lem.~A4--A5]{brezis} we infer that 
  \[
  \norm{x_n^\gamma}_{L^\infty(0,T; V^*)\cap L^2(0,T; V)}
  +\norm{z_n^\gamma}_{L^\infty(0,T; H)\cap L^2(0,T; V)}\leq M\left(
  \norm{\gamma_1}_{L^1(0,T; V^*)}+\norm{\gamma_2}_{L^1(0,T; H)}\right)
  \qquad\P\text{-a.s.}
  \]
  for a certain $M>0$ independent of $n$ and $\gamma$, from which \eqref{aux_x} follows.\\
  Now, we test the first equation by $x_n^\gamma$, the second by $-\Delta x_n^\gamma$ and take 
  the difference, getting
  \begin{align*}
  \frac12\norm{x_n^\gamma(t)}_H^2+A\int_{Q_t}|\Delta x_n^\gamma|^2&=
  B\int_{Q_t}\psi''_n(\varphi)x_n^\gamma\Delta x_n^\gamma\\
  &+\int_{Q_t}\left[h(\varphi)\mathcal Pz_n^\gamma + h'(\varphi)x_n^\gamma
  (\mathcal P\sigma -a - \alpha u)+\gamma_1\right]x_n^\gamma\,.
  \end{align*}
  The Young and H\"older inequalities together with the growth assumption on $\psi$, 
  the continuous embedding $V\embed L^6(D)$ and
  the boundedness of $h$, $\sigma$ and $u$ yield then
  \begin{align*}
  \norm{x_n^\gamma(t)}_H^2 &+ \int_{Q_t}|\Delta x_n^\gamma|^2\lesssim \norm{z_n^\gamma}^2_{L^2(0,T; H)}
  +\int_{Q_t}|\psi''(\varphi)|^2|x_n^\gamma|^2+
  \int_{Q_t}|x_n^\gamma|^2 + \int_0^t\norm{\gamma_1(s)}_H\norm{x_n^\gamma(s)}_H\,\d s\\
  &\lesssim\norm{z_n^\gamma}^2_{L^2(0,T; H)} + 
  \norm{\varphi}^4_{L^\infty(0,T; V)}\norm{x_n^\gamma}^2_{L^2(0,T; V)}
  +\int_{Q_t}|x_n^\gamma|^2 + \int_0^t\norm{\gamma_1(s)}_H\norm{x_n^\gamma(s)}_H\,\d s\,.
  \end{align*}
  The Gronwall lemma and the fact that 
  $\norm{z_n^\gamma}_{L^2(0,T; H)}\lesssim\norm{\gamma}_{L^1(0,T; H)^2}$
  imply again that 
  \[
  \norm{x_n^\gamma}_{L^\infty(0,T; H)\cap L^2(0,T; Z)}\leq M
  \left(
  \norm{\varphi}^2_{L^\infty(0,T; V)}\norm{x_n^\gamma}_{L^2(0,T; V)}
  +\norm{\gamma}_{L^1(0,T; H)^2}\right)\,.
  \]
  Now, note that since $q>p\geq6$ and $\varphi\in L^r(\Omega; L^\infty(0,T; V))$
  with $r>\frac{2pq}{q-p}>12$, we have in particular that $\norm{\varphi}^2_{L^\infty(0,T; V)}\in L^6(\Omega)$:
  hence, taking expectations in the last inequality and using the 
  the H\"older inequality and \eqref{aux_x}, we deduce also \eqref{aux_x'}.\\
  Finally, in order to prove the duality relation \eqref{eq:duality} 
  we test the equation for $x_n^\gamma$ by $\pi_n$, the equation for $z_n^\gamma$ by $\rho_n$,
  and we subtract the equation for $\pi_n$ tested by $x_n^\gamma$ and the equation for $\rho_n$
  tested by $z_n^\gamma$. Integrating from $0$ to $T$ and taking expectations, 
  using the initial conditions for $(x_n^\gamma, z_n^\gamma)$ and the final conditions 
  for $(\pi_n, \rho_n)$, the duality 
  relations follows from usual computations involving integration by parts.
\end{proof}

We are now ready to show uniform estimates on the approximated solutions
to the adjoint problem. The main tool we have at our disposal is the duality relation \eqref{eq:duality}.

\noindent{\bf First estimate.} For every $\gamma\in L^{6/5}(\Omega; L^1(0,T; H))^2$, the duality 
relation \eqref{eq:duality}
implies that 
\begin{align*}
\E\int_Q\pi_n\gamma_1 + \E\int_Q\rho_n\gamma_2&\leq
\norm{\beta_1(\varphi-\varphi_Q)}_{L^6(\Omega; L^2(0,T; H))}
\norm{x_n^\gamma}_{L^{6/5}(\Omega;L^2(0,T; H))}\\
&+\norm{\beta_2(\varphi(T)-\varphi_T)+\frac{\beta_3}2}_{L^6(\Omega; V)}
\norm{x_n^\gamma(T)}_{L^{6/5}(\Omega; V^*)}\,.
\end{align*}
Since $\varphi\in L^p(\Omega; L^\infty(0,T; V)\cap C^0([0,T]; H))$ and $p\geq6$, 
by Lemma~\ref{lin_gamma} we infer that 
\[
\E\int_Q\pi_n\gamma_1 + \E\int_Q\rho_n\gamma_2\leq M
\left(\norm{\gamma_1}_{L^{6/5}(\Omega; L^1(0,T; V^*))}+
  \norm{\gamma_2}_{L^{6/5}(\Omega; L^1(0,T; H))}\right)
\]
for a positive constant $M$ independent of $\gamma$ and $n$. 
Since $H$ is dense in $V^*$, taking 
supremum over $\gamma\in L^{6/5}(\Omega; L^1(0,T; H))^2$ such that 
$\norm{\gamma_1}_{L^{6/5}(\Omega; L^1(0,T; V^*))}\leq 1$ and
$\norm{\gamma_2}_{L^{6/5}(\Omega; L^1(0,T; H))}\leq 1$,
we deduce that 
for every $\ell\in[1,+\infty)$
\[
  \norm{\pi_n}_{L^6(\Omega; L^\ell(0,T; V))}+\norm{\rho_n}_{L^6(\Omega; L^\ell(0,T; H))}\leq M_\ell\,,
\] 
so that in particular
\beq
  \label{ad_est1}
  \norm{\pi_n}_{L^6(\Omega\times(0,T); V)}+\norm{\rho_n}_{L^6(\Omega\times(0,T); H)}\leq M\,.
\eeq

\noindent{\bf Second estimate.} We write It\^o's formula for the sum $\frac12\norm{\pi_n}_V^2+
\frac12\norm{\rho_n}_H^2$ (it is crucial not to do it seprately). 
By noting that $\frac{1}{2} D\|\pi_n\|_V^2 = \pi_n + \Delta \pi_n = \pi_n + \tilde \pi_n$, we have
\begin{align*}
  &\frac12\norm{\pi_n(t)}_V^2+\frac12\norm{\rho_n(t)}_H^2 + A\int_t^T\left(
  \norm{\nabla\tilde\pi_n(s)}_H^2+\norm{\tilde\pi_n(s)}_H^2\right)\,\d s
  +\int_t^T\norm{\nabla\rho_n(s)}_H^2\,\d s\\
  &\qquad+\int_t^T\!\!\int_D(ch(\varphi(s))+b)|\rho_n(s)|^2
  +\frac12\int_t^T\norm{\xi_n(s)}^2_{\cL^2(U_1,V)}\,\d s
  +\frac12\int_t^T\norm{\theta_n(s)}^2_{\cL^2(U_2,H)}\,\d s\\
  &=\frac12\norm{\beta_2(\varphi(T)-\varphi_T)}_V^2
  +\int_t^T\!\!\int_D\beta_1(\varphi-\varphi_Q)(s)\left(\tilde\pi_n(s)+\pi_n(s)\right)\,\d s
  +\int_t^T\!\!\int_D\mathcal P h(\varphi(s))\pi_n(s)\rho_n(s)\,\d s\\
  &\qquad+\int_t^T\!\!\int_D\left[h'(\varphi(s))(\mathcal P\sigma(s) - a - \alpha u(s))\pi_n(s)
  - ch'(\varphi(s))\sigma(s) \rho_n(s)\right]\left(\tilde\pi_n(s)+\pi_n(s)\right)\,\d s\\
  &\qquad-B\int_t^T\!\!\int_D\psi''_n(\varphi(s))|\tilde\pi_n(s)|^2\,\d s -
  B\int_t^T\!\!\int_D\psi''_n(\varphi(s))\tilde\pi_n(s)\pi_n(s)\,\d s\\
  &\qquad-\int_t^T\left(\xi_n(s), \tilde\pi_n(s)+\pi_n(s)\right)_H\,\d W_1(s)
  -\int_t^T\left(\theta_n(s), \rho_n(s)\right)_H\,\d W_2(s)\,.
\end{align*}
Taking expectations, using the boundedness of $h$, $\sigma$ and $u$
together with the Young inequality,
the first four terms on the right-hand side are estimated by 
\begin{align*}
  &\norm{\beta_2(\varphi(T)-\varphi_T)}_{L^2(\Omega; V)}^2 
  +C_\eps \norm{\beta_1(\varphi-\varphi_Q)}^2_{L^2(\Omega; L^2(0,T; H))} 
  +\eps\E\int_t^T\norm{\tilde\pi_n(s)}_H^2\,\d s\\
  &\qquad+C_\eps\norm{\pi_n}^2_{L^2(\Omega; L^2(0,T; H))} + 
  C_\eps\norm{\rho_n}^2_{L^2(\Omega; L^2(0,T; H))}
\end{align*}
for every $\eps>0$.
Secondly, note that by definition of $\tilde\pi_n$ and the fact that $\psi''\geq-C_2$,
\begin{align*}
-B\E\int_t^T\!\!\int_D\psi''_n(\varphi(s))|\tilde\pi_n(s)|^2\,\d s&\leq
BC_2\int_t^T \int_D|\tilde\pi_n|^2\leq
\eps\E\int_t^T\norm{\nabla\tilde\pi_n(s)}_H^2\,\d s + C_\eps\E\int_t^T\norm{\tilde\pi_n(s)}_{V^*}^2\,\d s\\
&\leq\eps\E\int_t^T\norm{\nabla\tilde\pi_n(s)}_H^2\,\d s + C_\eps\E\int_t^T\norm{\nabla\pi_n(s)}_{H}^2\,\d s.
\end{align*}
Finally, by the H\"older inequality, the growth assumption on $\psi''$
and the continuous inclusion $V\embed L^6(D)$ we have
\begin{align*}
 &-B\E\int_t^T\!\!\int_D\psi''_n(\varphi(s))\tilde\pi_n(s)\pi_n(s)\,\d s\lesssim
 \E\int_t^T\!\!\int_D\left(1+|\varphi(s)|^2\right)|\tilde\pi_n(s)||\pi_n(s)|\\
 &\qquad\leq\eps\E\int_t^T\norm{\tilde\pi_n(s)}_H^2\,\d s +
 C_\eps\E\int_t^T\!\!\int_D\left(1+|\varphi(s)|^4\right)|\pi_n(s)|^2\\
 &\qquad\leq\eps\E\int_t^T\norm{\tilde\pi_n(s)}_H^2\,\d s +
 C_\eps\norm{\varphi}^6_{L^6(\Omega\times(0,T); V)} + 
 C_\eps\norm{\pi_n}^6_{L^6(\Omega\times(0,T); V)}\,.
\end{align*}
Taking into account the positivity of $ch(\varphi)+b$, rearranging the terms 
and choosing $\eps$ sufficiently small, the estimate \eqref{ad_est1} yields
\begin{align}
\label{ad_est2}
  \norm{\pi_n}_{L^\infty(0,T; L^2(\Omega; V))} + \norm{\tilde\pi_n}_{L^2(\Omega; L^2(0,T; V))}
  +\norm{\rho_n}_{L^\infty(0,T; L^2(\Omega; H))\cap L^2(\Omega; L^2(0,T; V))}&\leq M\,,\\
  \label{ad_est3}
  \norm{\xi_n}_{L^2(\Omega; L^2(0,T; \cL^2(U_1,V)))}+ 
  \norm{\theta_n}_{L^2(\Omega; L^2(0,T; \cL^2(U_2,H)))}&\leq M
\end{align}
for a positive constant $M$ independent of $n$.

Going back now to It\^o's formula, taking first supremum in time and then expectations,
using \eqref{ad_est3} with Burkholder-Davis-Gundy inequality,
a classical procedure yields also by elliptic regularity
\begin{align}
  \label{ad_est4}
  \norm{\pi_n}_{L^2(\Omega; L^\infty(0,T; V)\cap L^2(0,T; Z\cap H^3(D)))} 
  +\norm{\tilde\pi_n}_{L^2(\Omega; L^\infty(0,T; V^*)\cap L^2(0,T; V))}&\leq M\,,\\
  \label{ad_est5}
  \norm{\rho_n}_{L^2(\Omega; L^\infty(0,T; H)\cap L^2(0,T; V))}&\leq M\,.
\end{align}

\noindent{\bf Passage to the limit.}
From \eqref{ad_est4}--\eqref{ad_est5} we obtain that there exist
\begin{gather*}
  \pi \in L^\infty(0,T; L^2(\Omega; V))\cap L^2(\Omega; L^2(0,T; Z\cap H^3(D)))\,,\\
  \tilde\pi\in L^\infty(0,T; L^2(\Omega; V^*))\cap L^2(\Omega; L^2(0,T; V))\,, \\
  \rho\in L^\infty(0,T; L^2(\Omega; H))\cap L^2(\Omega; L^2(0,T; V))\,,\\
  \xi\in L^2(\Omega; L^2(0,T; \cL^2(U_1,V)))\,, \qquad
  \theta\in L^2(\Omega; L^2(0,T; \cL^2(U_2,H)))\,,
\end{gather*}
such that 
\begin{align*}
  \pi_n \wstarto \pi \qquad&\text{in } L^\infty(0,T; L^2(\Omega; V))\cap L^2(\Omega; L^2(0,T; Z\cap H^3(D)))\,,\\
  \tilde\pi_n \wstarto \tilde \pi \qquad&\text{in }
  L^\infty(0,T; L^2(\Omega; V^*))\cap L^2(\Omega; L^2(0,T; V))\,,\\
  \rho_n \wstarto\rho \qquad&\text{in }
  L^\infty(0,T; L^2(\Omega; H))\cap L^2(\Omega; L^2(0,T; V))\,,\\
  \xi_n\wto\xi \qquad&\text{in } L^2(\Omega; L^2(0,T;\cL^2(U_1,V)))\,,\\
  \theta_n\wto\theta \qquad&\text{in } L^2(\Omega; L^2(0,T;\cL^2(U_2,H)))\,.
\end{align*}
Notice that, in order to extract the above (weakly converging) sequences, we modified on purpose the functional setting so to guarantee reflexivity of the functional spaces in consideration. 
The weak convergences are enough to pass to the limit in each term 
of the variational formulation of the approximated problem. Let us show 
in detail only the term involving $\psi''_n$. First of all, since
$\psi''(\varphi)\in L^{p/2}(\Omega; L^\infty(0,T; L^3(D)))$ and $p\geq6$, 
by the properties of the truncation operator we have 
\[
  \psi''_n(\varphi)\to \psi''(\varphi) \qquad\text{in } L^3(\Omega\times(0,T)\times D)\,.
\]
Hence, by strong-weak convergence we infer that 
\[
  \psi''_n(\varphi)\tilde\pi_n \wto \psi''(\varphi)\tilde \pi \qquad\text{in } L^{6/5}(\Omega\times(0,T)\times D)\,.
\]
By linearity of the approximated adjoint system, letting $n\to\infty$ we obtain exactly 
conditions \eqref{ad1}--\eqref{ad3}. The further 
regularity in \eqref{pi}--\eqref{xi-sig} is recovered a posteriori 
in the limit equation by It\^o's formula.
This completes the proof of existence 
for the adjoint system.

\noindent{\bf Uniqueness.}
For every $\gamma \in L^{6/5}(\Omega; L^1(0,T; H))^2$,
using the estimates \eqref{aux_x}--\eqref{aux_x'}
and arguing as in the proof of Lemma~\ref{lin_gamma},
it is straightforward to 
prove the existence of $(x^\gamma, y^\gamma, z^\gamma)$ such that 
  \begin{gather*}
    x^{\gamma} \in L^{1}(\Omega; H^1(0,T; Z^*)\cap L^2(0,T; Z))\cap 
    L^{6/5}(\Omega; C^0([0,T]; V^*)\cap L^2(0,T; V))\,, \\
    y^{\gamma}\in L^{1}(\Omega; L^2(0,T; H))\,,\\
    z^{\gamma}\in L^{6/5}(\Omega; H^1(0,T; V^*)\cap L^2(0,T; V))\,,
  \end{gather*}
  such that 
  \begin{align*}
  \partial_t x^\gamma - \Delta y^\gamma= 
  h(\varphi)\mathcal P z^\gamma + h'(\varphi)x^\gamma
  (\mathcal P\sigma -a - \alpha u) +\gamma_1
   &\qquad\text{in } (0,T)\times D\,,\\
  y^\gamma=-A\Delta x^\gamma + B\psi''(\varphi)x^\gamma &\qquad\text{in } (0,T)\times D\,,\\
  \partial_t z^\gamma - \Delta z^\gamma + c z^\gamma h(\varphi) 
  + c\sigma h'(\varphi)x^\gamma
  +bz^\gamma=\gamma_2 &\qquad\text{in } (0,T)\times D\,,\\
  \partial_{\bf n}x^\gamma = \partial_{\bf n} z^\gamma = 0 &\qquad\text{in } (0,T)\times \partial D\,,\\
  x^\gamma (0)=z^\gamma(0)=0 &\qquad\text{in } D\,.
\end{align*}

Assume now that $(\pi_i,\tilde\pi_i, \rho_i,\xi_i,\theta_i)$ solve \eqref{pi}--\eqref{ad3}
for $i=1,2$. Testing the equation for $x^\gamma$ by $\pi_i$, the equation for $z^\gamma$ by $\rho_i$,
and subtracting the equation for $\pi_i$ tested by $x^\gamma$ and the equation for $\rho_i$
tested by $z^\gamma$, integrating from $0$ to $T$ and taking expectations, 
using the initial conditions for $(x^\gamma, z^\gamma)$ and the final conditions 
for $(\pi_i, \rho_i)$, we get the duality relation
\[
  \E\int_Q\pi_i\gamma_1 + \E\int_Q\rho_i\gamma_2 = \beta_1\E\int_Q(\varphi-\varphi_Q)x^\gamma +
   \E\int_D\left(\beta_2(\varphi(T)-\varphi_T)+\frac{\beta_3}{2}\right)x^\gamma(T)
\]
for every $\gamma\in L^2(\Omega; L^2(0,T; H))^2$ and for $i=1,2$.
Hence, subtracting we infer that 
\[
  \E\int_Q(\pi_1-\pi_2)\gamma_1 + \E\int_Q(\rho_1-\rho_2)\gamma_2 = 0\,,
\]
from which $\pi_1=\pi_2$ and $\rho_1=\rho_2$. The uniqueness of the other solution
components follows then by comparison in the equations.

\subsection{First order conditions for optimality}
We prove here the first version of the necessary conditions
for optimality contained in Proposition~\ref{th:7} and Theorem~\ref{th:9}.

Let $(\bar u, \bar w)\in \mathcal U$ be an optimal control, 
$\varphi:=\mathcal S(\bar u, \bar w)$ the corresponding state
and let $(u,w)\in\mathcal U$ be arbitrary.
Setting $k:=(k_u,k_w):=(u-\bar u, w-\bar w)$, 
by the convexity of $\mathcal U$ we have that 
$(\bar u+\eps k_u, \bar w + \eps k_w)\in\mathcal U$ for every 
$\eps\in[0,1]$: hence, by definition of optimal control we have,
setting $\varphi_\eps:=\mathcal S(\bar u+\eps k_u, \bar w + \eps k_w)$,
\begin{align*}
  J(\bar\varphi, \bar u,\bar w)
  &\leq 
  \frac{\beta_1}{2}\E\int_Q|\varphi_\eps-\varphi_Q|^2
  +\frac{\beta_2}{2}\E\int_D|\varphi_\eps(T)-\varphi_T|^2
  +\frac{\beta_3}{2}\E\int_D(\varphi_\eps(T) + 1)\\
  &+\frac{\beta_4}{2}\E\int_Q|\bar u+\eps k_u|^2
  +\frac{\beta_5}{2}\E\int_Q|\bar w+\eps k_w|^2\,.
\end{align*}
Solving the square-powers on the right-hand side and 
plugging in the definition of $J$ yields
\begin{align*}
  0
  &\leq 
  \frac{\beta_1}{2}\E\int_Q\left(|\varphi_\eps|^2-|\bar\varphi|^2-2(\varphi_\eps-\bar\varphi)\varphi_Q\right)
  +\frac{\beta_2}{2}\E\int_D\left(|\varphi_\eps(T)|^2-|\bar\varphi(T)|^2
  -2(\varphi_\eps-\bar\varphi)(T)\varphi_T\right)\\
  &+\frac{\beta_3}{2}\E\int_D(\varphi_\eps-\bar\varphi)(T)
  +\frac{\beta_4}{2}\E\int_Q\left(\eps^2|k_u|^2+2\eps \bar uk_u\right)
  +\frac{\beta_5}{2}\E\int_Q\left(\eps^2|k_w|^2+2\eps \bar wk_w\right)\,.
\end{align*}
Dividing by $\eps$ and using the G\^ateaux-differentiability of $J$ (all the terms admit G\^ateaux differential) we infer that 
\begin{align*}
  0
  &\leq 
  \beta_1\E\int_Q\left(
  \int_0^1(\bar\varphi+\tau(\varphi_\eps-\bar\varphi))\,d\tau - \varphi_Q
  \right)\frac{\varphi_\eps-\varphi}{\eps}\\
  &+\beta_2\E\int_D\left(
  \int_0^1(\bar\varphi+\tau(\varphi_\eps-\bar\varphi))(T)\,d\tau - \varphi_T
  \right)\frac{\varphi_\eps-\varphi}{\eps}(T)\\
  &+\frac{\beta_3}{2}\E\int_D\frac{\varphi_\eps-\bar\varphi}\eps(T)
  +\beta_4\E\int_Q \bar uk_u + \beta_5\E\int_Q\bar wk_w
  +\eps\left(\frac{\beta_4}2\E\int_Q|k_u|^2 + \frac{\beta_5}{2}\E\int_Q|k_w|^2\right)\,.
\end{align*}
By Theorem~\ref{th:6} we have that 
$\varphi_\eps-\bar\varphi\to 0$ strongly in $L^2(\Omega; C^0([0,T]; H))$,
and $\frac{\varphi_\eps-\bar\varphi}{\eps}\to x_k$ strongly in $L^2(\Omega; L^2(0,T; H))$
and $\frac{\varphi_\eps-\bar\varphi}{\eps}(T)\rightharpoonup x_k(T)$ weakly in $L^2(\Omega; H)$,
so that letting $\eps\searrow 0$ we can conclude that 
\[
  \beta_1\E\int_Q(\bar\varphi-\varphi_Q)x_k
  +\beta_2\E\int_D(\bar\varphi(T)-\varphi_T)x_k(T)
  +\frac{\beta_3}2\E\int_Dx_k(T)
  +\beta_4\E\int_Q\bar uk_u 
  +\beta_5\E\int_Q\bar w k_w
  \geq 0\,,
\]
and Proposition~\ref{th:7} is proved.

Finally, note that choosing $\gamma_1=-\alpha h(\varphi) k_u$ and $\gamma_2=b k_w$
we get $x^\gamma=x_k$, $y^\gamma=y_k$ and $z^\gamma=z_k$ by Theorem~\ref{th:5}, so that 
as we have already pointed out in the previous sections, the following duality formula holds:
\[
  -\alpha\E\int_Q\pi h(\bar\varphi) k_u+ b\E\int_Q\rho k_w = \beta_1\E\int_Q(\bar\varphi-\varphi_Q)x_k +
   \E\int_D\left(\beta_2(\bar\varphi(T)-\varphi_T)+\frac{\beta_3}{2}\right)x_k(T)\,.
\]
By comparison we obtain the desired inequality, and Theorem~\ref{th:9} is finally proved.

{\footnotesize
\bibliography{ref}{}
\bibliographystyle{abbrv}
}
\end{document}